\renewcommand{\geq}{\geqslant}
\renewcommand{\leq}{\leqslant}
\renewcommand{\epsilon}{\varepsilon}
\DeclareMathOperator{\Li}{Li}
\definecolor{darkgreen}{rgb}{0,0.4,0}
\definecolor{MyDarkBlue}{rgb}{0,0.08,0.85}
\definecolor{BrickRed}{rgb}{0.8,0.08,0}
\newtheorem{theorem}{Theorem}
\newtheorem{lemma}{Lemma}
\newtheorem{remark}{Remark}
\newtheorem{proposition}[lemma]{Proposition}
\newtheorem{cor}[lemma]{Corollary}
\newtheorem{definition}{Definition}
\newtheorem{conj}{Conjecture}
\numberwithin{equation}{section}
\title{Constructing discrete harmonic functions in wedges}
\author{Viet Hung Hoang}
\address{CNRS \and Institut Denis Poisson, UMR CNRS 7013, Universit\'e de Tours et Universit\'e d'Orl\'eans, Parc de Grandmont, 37200 Tours, France}
\email{viet.hung-hoang@lmpt.univ-tours.fr}
\author{Kilian Raschel}
\address{CNRS \and Laboratoire Angevin de Recherche en Math\'ematiques, UMR CNRS 6093, Universit\'e d'Angers, 2 Boulevard Lavoisier, 49000 Angers, France}
\email{raschel@math.cnrs.fr}
\thanks{This project has received funding from the European Research Council (ERC) under the European Union's Horizon 2020 research and innovation programme under the Grant Agreement No.\ 759702.}
\author{Pierre Tarrago}
\address{Laboratoire de Probabilit\'es, Statistique et Mod\'elisation, UMR CNRS 8001, Sorbonne Universit\'e, 4 Place Jussieu, 75005 Paris, France}
\email{pierre.tarrago@sorbonne-universite.fr}
\keywords{Discrete harmonic functions, conformal mappings}
\subjclass[2010]{Primary 31C35, 60G50; Secondary 	60J45, 60J50, 31C20}
\date{\today}
\begin{document}

\begin{abstract}
We propose a systematic construction of signed harmonic functions for discrete Laplacian operators with Dirichlet conditions in the quarter plane. In particular, we prove that the set of harmonic functions is an algebra generated by a single element, which conjecturally corresponds to the unique positive harmonic function.
\end{abstract}

\maketitle

\setcounter{tocdepth}{1}
\tableofcontents

\section{Introduction and main results}
\label{sec:introduction}
\nocite{BoBMMe-18}

\subsection*{Harmonic functions for the standard Laplacian}
Let us first briefly recall some facts around the elliptic operator in $\mathbb R^2$
\begin{equation}
\label{eq:continuous_Laplacian}
   \Delta=\sigma_1\frac{\partial^2}{\partial x^2}+2\sigma_{1,2}\frac{\partial^2}{\partial x\partial y}+\sigma_2\frac{\partial^2}{\partial y^2},
\end{equation}
which we will call continuous or classical Laplacian operator.
Given a domain $D\subset \mathbb R^2$ with boundary $\partial D$, solving the Dirichlet problem on $D$ for the operator $\Delta$ amounts to find the set $H(D)$ of functions $h:D\rightarrow \mathbb{R}$ which are harmonic on $D$, continuous on the closure $\overline{D}$ of $D$, and zero on $\partial D$. This problem admits the unique solution $h=0$ when $D$ is bounded, but the situation is much richer when $D$ is unbounded. 

Assume first that $\Delta$ is the standard Laplacian (i.e., $\sigma_1=\sigma_2=1$ and $\sigma_{1,2}=0$ in \eqref{eq:continuous_Laplacian}) and $D$ is the upper half-plane $\mathcal{H}=\{x+iy:y>0\}$, and look at the associated Dirichlet problem. Defining $h(z)=-h(\overline{z})$ for $z$ in the lower half-plane, the classical Schwarz reflection principle implies that $h$ can be extended to a harmonic function on $\mathbb{C}$. Hence $h$ is solution to this Dirichlet problem if and only if there exists an analytic function $f$ on $\mathbb{C}$ such that $h=\Im f$ (with $\Im$ denoting the imaginary part), and the Schwarz reflection implies that $f$ is self-conjugate, meaning that $f(\overline{z})=\overline{f(z)}$ on $\mathbb C$. The set $H(\mathcal{H})$ therefore admits the explicit description
\begin{align*}
   H(\mathcal{H})&=\bigl\{\Im f_{\vert\mathcal{H}}: f \text{ is analytic on $\mathbb C$ and } f(\overline{z})=\overline{f(z)}\bigr\},\\
   &=\bigl\{\sum_{n\geq 1} a_n \Im(x^{n}): a_n\in\mathbb{R} \text{ and } \vert a_n\vert^{1/n}\rightarrow 0\bigr\}.
\end{align*}

A similar description holds for other cones (not necessarily the half-space $\mathcal{H}$) and Laplacian operators \eqref{eq:continuous_Laplacian} with general covariance matrix 
\begin{equation}
\label{eq:covariance_matrix}
   \sigma=\left(\begin{array}{ll}\sigma_1&\sigma_{1,2}\\\sigma_{1,2}&\sigma_{2}\end{array}\right).
\end{equation}
In particular, for a future use, when $D$ is the positive quadrant $\mathcal Q$, we have
\begin{equation}
\label{eq:basis_continuous_case}
   H(\mathcal Q)=\bigl\{\sum_{n\geq1} a_n h_n^\sigma: a_n\in\mathbb{R} \text{ and } \vert a_n\vert^{1/n}\rightarrow 0\bigr\},
\end{equation}
where the functions $h_n^\sigma$ will be introduced later (see \eqref{eq:h_n^sigma}) and $\sigma$ is given by \eqref{eq:covariance_matrix}.

\subsection*{A glimpse of our results}
Our main objective in the present paper is to prove that in the discrete setting, a surprisingly simple equality of set holds, analogue to \eqref{eq:basis_continuous_case}; see Theorem \ref{thm:main_intro-2} for a precise statement.

A function $h:\mathbb Z^2\to\mathbb R$ is discrete harmonic (an equivalent terminology is preharmonic) in a domain $D\subset\mathbb Z^2$ with respect to the discrete Laplacian operator 
\begin{equation}
\label{eq:def_Laplacian}
    \Delta h(i,j) = \sum_{k,\ell} p_{k,\ell} h(i+k,j+\ell)- h(i,j),
\end{equation}
the set of weights $\{p_{k,\ell}\}$ being fixed, if $\Delta h(i,j)=0$ for all $(i,j)\in D$. In this article, we start from the analytic approach of \cite{CoBo-83} and propose a systematic construction of discrete harmonic functions in the quarter plane (i.e., $D=\mathbb N^2=\{1,2,3,\ldots\}^2$) which vanish on the boundary axes. We go beyond the existing literature, in the sense that our construction works:
\begin{enumerate}[label=(\roman{*}),ref={\rm(\roman{*})}]
     \item\label{item:neg_jumps}for walks with arbitrary big (negative) jumps (see Figure \ref{fig:step_sets}), 
     \item\label{item:neg_values}not only for positive, but also for signed discrete harmonic functions.
\end{enumerate}
The two above features illustrate the robustness of our theory. The constructive aspect will follow from that we will obtain exact expressions for the generating functions of harmonic functions in terms of certain conformal mappings.


\subsection*{Construction of preharmonic functions}

First of all, we would like to review some results in the literature dedicated to constructions of discrete harmonic functions. Let us first mention elementary constructions of preharmonic functions on $\mathbb Z^d$. Discrete polynomials and discrete exponential functions are constructed (mostly iteratively) in \cite{Fe-44,He-49,Is-52,Du-56}; see also \cite{Mu-64} for a construction of preharmonic polynomials in terms of well-signed multinomials. 

Further examples arise when preharmonic functions are defined on sets having certain rigid structures. Picardello and Woess \cite{PiWo-92} prove that discrete harmonic functions for Cartesian products of Markov chains have a product form. In the case of Weyl chambers of type A, Eichelsbacher and K\"onig \cite{EiKo-08} prove that preharmonic functions can be expressed in terms of Vandermonde determinants. K\"onig and Schmid \cite{KoSc-10} demonstrate similar results for Weyl chambers of other types. Still in presence of a Weyl chamber structure, Biane, Bougerol and O'Connell \cite{BiBoOC-05} compute the (continuous) harmonic function (namely, the survival probability) with the help of the reflection principle.

Using the theory of analytic combinatorics in several variables \cite{PeWi-13}, Courtiel et al.\ \cite{CoMeMiRa-17} study simple random walks with drift in various (two-dimensional) wedges and show that the harmonic functions obey to a rigid construction: namely, they are all obtained from a single function (again related to the reflection principle, see \cite[Eq.~(20)]{CoMeMiRa-17}), after some elementary operations (differentiation, division, evaluation). However, this technique seems to work only for random walks satisfying to (a certain algebraic version of) the reflection principle.

Roughly speaking, the Martin boundary theory aims at describing the set of all positive harmonic functions, given a Laplacian operator. See \cite{KuMa-98,IR-08,IRLo-10,Ra-11,Ra-14,LeRa-16} for examples when the Laplacian is related to random walks killed on the boundary of a half-plane or quadrant. This theory only rarely yields a construction of harmonic functions. In the previously cited articles, the computation of Martin boundary relies on an asymptotic study of quotients of Green functions. Let us underline, however, that Ignatiouk-Robert and Loree \cite{IR-08,IRLo-10} find an explicit formula for the exponential growth of all positive preharmonic functions in the Martin boundary.

In another direction, it is natural to ask whether discrete and classical harmonic functions are related, see for instance the papers \cite{Va-09,Va-15} by Varopoulos. In \cite{DeWa-15}, Denisov and Wachtel prove that a certain natural preharmonic function (appearing as a prefactor in the persistence probability asymptotics)\ can be constructed by compensating the classical harmonic function, see \cite[Eq.~(5)]{DeWa-15}. 

Representation theory provides us with many examples of Markov processes for which explicit expressions of harmonic functions exist. For instance, preharmonic functions are expressed in terms of dimensions of irreducible representations in \cite{Bi-91,Bi-92}. Such results are quite powerful, but intrinsically limited to a few step sets. 

We mention here a very last framework, which will be at the heart of the present work, and which relies on complex analysis techniques. As we shall see, the generating functions of preharmonic functions satisfy certain functional equations and, after further analysis, boundary value problems (BVPs). Solving these BVPs eventually yields exact expressions for the generating functions in terms of conformal mappings. The link between harmonic functions and conformal mappings is already illustrated in \cite{Ra-14,LeRa-16}, but we shall go much further here, by considering random walks with arbitrary big negative jumps (see item \ref{item:neg_jumps} above), and by constructing not only positive preharmonic functions (item \ref{item:neg_values}); recall that in \cite{Ra-14,LeRa-16}, the attention is restricted to positive harmonic functions for small step walks.

Obviously, the list of constructive approaches to preharmonic functions could be continued, as the latter are all-present in probability theory and statistical physics, see the book \cite{Pr-12} for recent illustrations.

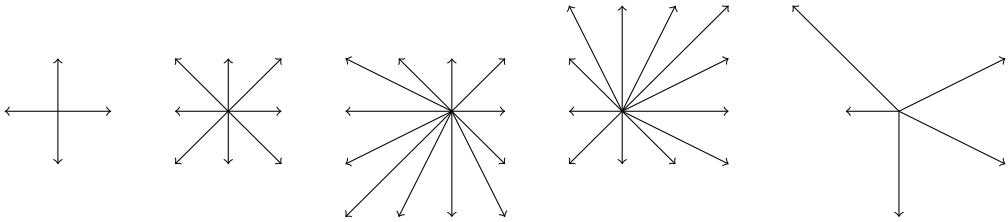
\begin{figure}
\begin{center}
\begin{tikzpicture}[scale=.7] 
    \draw[->,white] (1,2) -- (-1,-2);
    \draw[->,white] (1,-2) -- (-1,1);
    \draw[->] (0,0) -- (0,-1);
    \draw[->] (0,0) -- (0,1);
    \draw[->] (0,0) -- (-1,0);
    \draw[->] (0,0) -- (1,0);
  \end{tikzpicture}
\begin{tikzpicture}[scale=.7] 
    \draw[->,white] (2,2) -- (-2,-2);
    \draw[->,white] (2,-2) -- (-2,2);
    \draw[->] (0,0) -- (0,-1);
    \draw[->] (0,0) -- (0,1);
    \draw[->] (0,0) -- (1,1);
    \draw[->] (0,0) -- (-1,-1);
    \draw[->] (0,0) -- (1,-1);
    \draw[->] (0,0) -- (-1,1);
    \draw[->] (0,0) -- (-1,0);
    \draw[->] (0,0) -- (1,0);
  \end{tikzpicture}
   \begin{tikzpicture}[scale=.7] 
  \draw[->,white] (1.5,2) -- (-2,-2);
    \draw[->,white] (1.5,-2) -- (-2,2);
    \draw[->] (0,0) -- (1,1);
    \draw[->] (0,0) -- (0,1);
    \draw[->] (0,0) -- (1,0);
    \draw[->] (0,0) -- (-2,-1);
    \draw[->] (0,0) -- (-2,-2);
    \draw[->] (0,0) -- (-2,0);
    \draw[->] (0,0) -- (-1,-2);
    \draw[->] (0,0) -- (0,-2);
    \draw[->] (0,0) -- (1,-2);
    \draw[->] (0,0) -- (-2,1);
    \draw[->] (0,0) -- (-1,1);
    \draw[->] (0,0) -- (1,-1);
  \end{tikzpicture}
     \begin{tikzpicture}[scale=.7] 
  \draw[->,white] (2.5,2) -- (-1.5,-2);
    \draw[->,white] (2,-2) -- (-1.5,2);
    \draw[->] (0,0) -- (-1,-1);
    \draw[->] (0,0) -- (0,-1);
    \draw[->] (0,0) -- (-1,0);
    \draw[->] (0,0) -- (2,1);
    \draw[->] (0,0) -- (2,2);
    \draw[->] (0,0) -- (2,0);
    \draw[->] (0,0) -- (1,2);
    \draw[->] (0,0) -- (0,2);
    \draw[->] (0,0) -- (-1,2);
    \draw[->] (0,0) -- (2,-1);
    \draw[->] (0,0) -- (1,-1);
    \draw[->] (0,0) -- (-1,1);
  \end{tikzpicture}
    \begin{tikzpicture}[scale=.7] 
  \draw[->,white] (2,2) -- (-2.5,-2);
    \draw[->,white] (2,-2) -- (-2,2);
    \draw[->] (0,0) -- (0,-2);
    \draw[->] (0,0) -- (-2,2);
    \draw[->] (0,0) -- (2,1);
    \draw[->] (0,0) -- (-1,0);
    \draw[->] (0,0) -- (2,-1);
  \end{tikzpicture}
  \end{center}
  \label{fig:step_sets}
  \caption{Various step sets. From left to right: the simple random walk; the king walk (with jumps to all eight nearest neighbours); an example with big negative jumps; an example with big positive jumps; an arbitrary model.}
\end{figure}

\subsection*{Various analytic approaches} 
As we have seen above, some results on harmonic functions (structure of the Martin boundary, exact expressions, see \cite{Ra-14,LeRa-16})\ have already been obtained using the analytic approach of \cite{FaIaMa-17}. Let us now say a few words about this method. It was initially developed by Malyshev in \cite{Ma-72}, Fayolle and Iasnogorodski \cite{FaIa-79}, to study the stationary distribution of random walks in the quarter plane reflected at the boundary. The stationary distribution generating functions are shown to satisfy BVPs.
From this, explicit expressions (typically, contour integral involving special functions)\ are deduced. See \cite[Chap.~5]{FaIaMa-17} for full details.

Since then, the analytic approach of \cite{FaIaMa-17} has been applied to various contexts: queueing networks \cite{KuSu-03}, potential theory \cite{Ra-11,Ra-14,LeRa-16}, enumerative combinatorics (counting walks in the quarter plane)\ \cite{BMMi-10,KuRa-12,DrHaRoSi-18}. This approach is thus particularly fruitful and leads to precise results (both exact and asymptotic results). 

However, the construction in \cite{FaIaMa-17} is restricted to the case of random walks with jumps to the eight nearest neighbours (see Figure \ref{fig:step_sets}). Its generalization to bigger jumps would require the precise understanding of the location of the branch points on a certain Riemann surface, see \cite{FaRa-15}. This is certainly possible on a few given examples, but the variety of possible behaviors makes us pessimistic to pursue in this direction to develop a general theory.

We will prefer here the alternative analytic approach \cite{CoBo-83} by Cohen and Boxma (see also \cite{Co-92} by Cohen). The starting point is essentially the same (writing functional equations and BVPs for the generating functions) but remarkably, the construction can be done for arbitrary big (negative) jumps without increasing the level of complexity. The analytic approaches of \cite{FaIaMa-17} and \cite{CoBo-83} will be compared in Section \ref{sec:examples}.

\subsection*{Signed harmonic functions}
In most of the literature cited above, the focus is put on positive harmonic functions, for clear probabilistic reasons: for example in relation with the concept of Doob transform, or because many probabilistic estimates use positive harmonic functions. In this paper, we go further and look at signed harmonic functions. Our motivation is fourfold: first, this will allow us to study the structure of the set of harmonic functions (for instance, we shall see that the vector space of harmonic functions having a bounded polynomial growth is finite-dimensional and will give a basis).

Our second motivation is that signed harmonic functions appear in various complete asymptotic expansions of relevant probabilistic or combinatorial quantities. To give a concrete example (related to this paper), let $K$ be a given cone of $\mathbb R^d$ and $\{Z(n)\}_{n\geq0}$ be a zero-mean random walk (with some moment assumptions). Then it is shown in \cite{DeWa-15} that, as $n\to \infty$, the survival probability is asymptotically equivalent to
\begin{equation}
\label{eq:one-term_asymp}
   \mathbb P_x(\tau_K>n)\sim h(x) n^{-\alpha},
\end{equation}
where $x\in K$ is the starting point of the random walk, $\tau_K$ is the first exit time of $\{Z(n)\}_{n\geq0}$ from the cone $K$, $h(x)$ is a harmonic function and $\alpha>0$ is a critical exponent. Assuming that \eqref{eq:one-term_asymp} may be refined as
\begin{equation}
\label{eq:many-terms_asymp}
   \mathbb P_x(\tau_K>n)\sim \sum_{i}h_i(x) n^{-\alpha_i},
\end{equation}
it is shown in \cite{ChFuRa-20} that the $h_i(x)$ may be constructed from certain polyharmonic functions as well as from signed harmonic functions.

Our third motivation comes from potential theory, where sign changing discrete harmonic functions turn out to be more difficult to study, compared to positive harmonic functions. Indeed, such classical tools as Harnack inequality (heavily used in \cite{Mu-06,Va-99}, for instance) do not hold anymore. Any construction of such functions becomes more relevant.

Finally, there might be some interesting features regarding the nodal domains of these sign changing harmonic functions. Indeed, within the connected components of the nodal lines, harmonic functions take a constant sign (by definition). Take an example: the function $h(i,j)=ij(i-j)(i+j)$ is discrete harmonic for the usual Laplacian in dimension $2$ (probability $\frac{1}{4}$ to the four nearest neighbours):
\begin{equation}
\label{eq:def_Laplacian_usual}
    \Delta h(i,j) = \frac{1}{4}\bigl(h(i+1,j)+h(i,j-1)+h(i-1,j)+h(i,j+1)\bigr)- h(i,j).
\end{equation}
Its nodal lines are given by the two axes, the diagonal $\{i-j=0\}$ and the anti-diagonal $\{i+j=0\}$, see Figure \ref{fig:nodal_lines}. The same function $h$ is also a positive harmonic function with Dirichlet boundary condition in the octant $\{(i,j)\in\mathbb Z^2 : 0\leq i\leq j\}$. What would be the general structure of these nodal lines?

\begin{figure}[ht]
\includegraphics[width=0.2\textwidth]{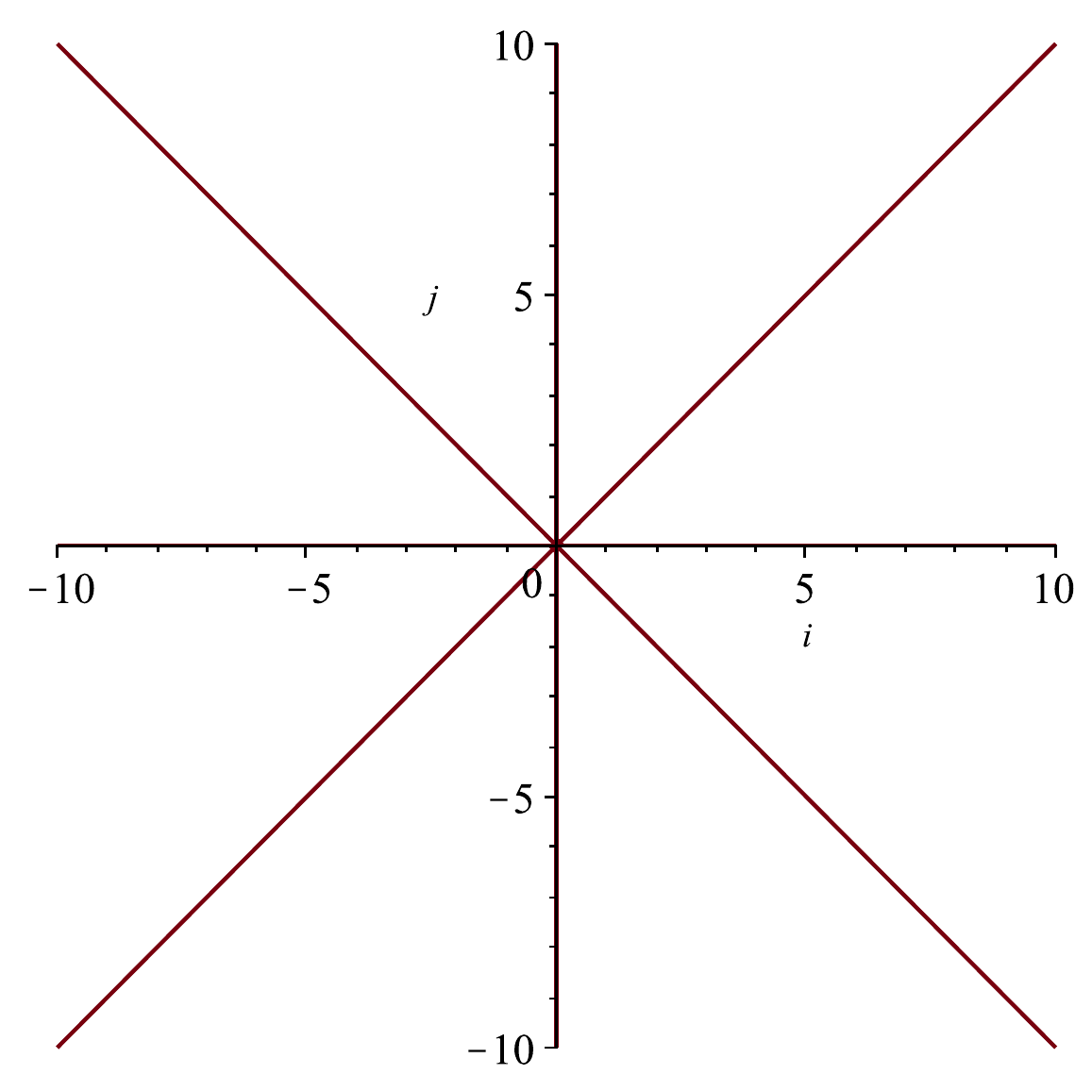}\qquad\qquad
\includegraphics[width=0.2\textwidth]{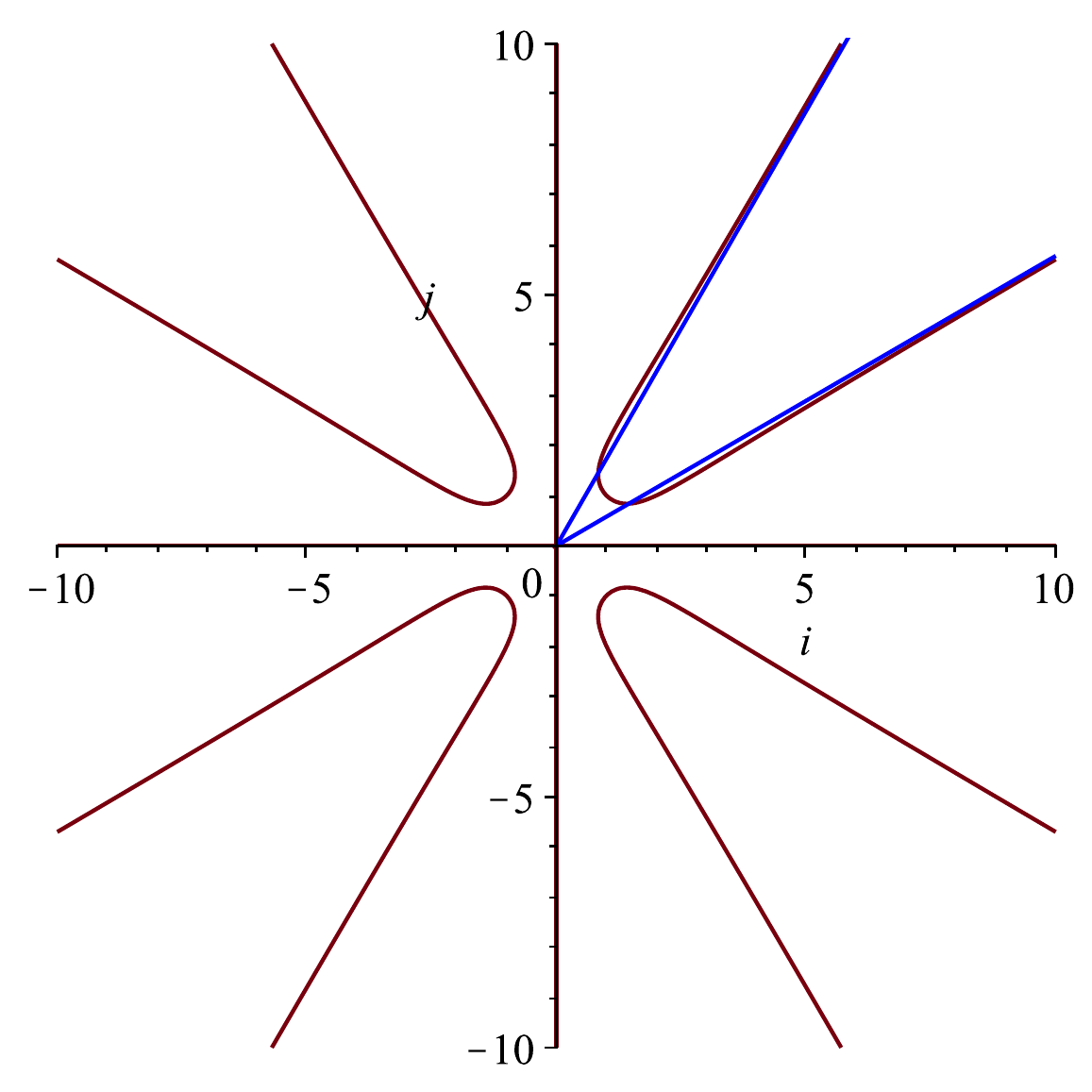}
\caption{Nodal domains associated to the functions $ij(i-j)(i+j)$ and $ij(3i^4-10i^2j^2+3j^4-5i^2-5j^2+14)$, which are discrete harmonic for the operator \eqref{eq:def_Laplacian_usual}. On the right picture, the blue cone is tangent to the nodal lines at infinity.}
\label{fig:nodal_lines}
\end{figure}

\subsection*{Preharmonic functions and their generating functions}

Let $\{p_{k,\ell}\}_{(k,\ell)\in\mathbb Z^2}$ be non-negative weights (or transition probabilities) summing to $1$, such that:
\begin{enumerate}[label=(H\arabic{*}),ref={\rm (H\arabic{*})}]
   \item\label{H1:jumps}The weights are symmetric, i.e., for all $k$ and $\ell$, $p_{k,\ell}=p_{\ell,k}$;
   \item\label{H2:jumps}Weights in the positive directions should be small, i.e., $p_{k,\ell}=0$ if $k\geq2$ or $\ell\geq2$, while weights in the negative directions may be arbitrary large, see Figure \ref{fig:step_sets};
   \item\label{H3:jumps}If $\big\vert \sum p_{k,\ell}x^ky^\ell \big\vert=1$ and $\vert x\vert=\vert y\vert=1$, then $x=y=1$ (equivalently, the random walk on $\mathbb Z^2$ with increment distribution given by the $p_{k,\ell}$ is irreducible);
   \item\label{H4:jumps}The $p_{k,\ell}$ admit moments of order $2$, i.e., $\sum (k^2+\ell^2) p_{k,\ell}<\infty$;
   \item\label{H5:jumps}The drift is zero, meaning that $\sum k p_{k,\ell}=\sum \ell p_{k,\ell}=0$.
\end{enumerate}
Consider now the associated discrete Laplacian operator \eqref{eq:def_Laplacian}, acting on complex-valued functions $h=\{h(i,j)\}_{(i,j)\in\mathbb Z^2}$. Our aim is to describe the functions $h$ which satisfy to:
\begin{enumerate}[label=(H\arabic{*}),ref={\rm (H\arabic{*})}]\setcounter{enumi}{5}
    \item\label{H1:harmonic}For all $(i,j)\in\mathbb Z^2$ with $i\leq0$ and/or $j\leq0$, $h(i,j)=0$;
    \item\label{H2:harmonic}For all $i,j\geq 1$, $\Delta h(i,j)=0$.
\end{enumerate}
Such functions are harmonic for the random walk on $\mathbb Z^2$ killed when exiting the quadrant.

The generating function of the harmonic function $h$ is
\begin{equation}
\label{eq:generating_functions_harmonic_functions}
     H(x,y)=  \sum_{i,j\geq 1} h(i,j) x^{i-1}y^{j-1},
\end{equation}
and its sections are
\begin{equation}
\label{eq:generating_functions_harmonic_functions_uni}
     H(x,0)= \sum_{i\geq 1} h(i,1) x^{i-1}
     \quad\text{and}\quad H(0,y)= \sum_{j\geq 1} h(1,j) y^{j-1}.
\end{equation}
Finally, the kernel is
\begin{equation}
\label{eq:def_K}
    K(x,y)= xy\left(1-\sum p_{k,\ell}x^{-k}y^{-\ell}\right) =xy-\sum p_{k,\ell}x^{-k+1}y^{-\ell+1}.
\end{equation}
The kernel is a bivariate power series due to our hypothesis \ref{H2:jumps} (even a polynomial if the jumps are bounded) and is obviously fully characterized by the jumps $\{p_{k,\ell}\}$. The function $H(x,y)$ satisfies the functional equation (which simply reflects the harmonicity relations)
\begin{equation}
\label{eq:functional_equation}
     K(x,y)H(x,y) = K(x,0) H(x,0) +K(0,y) H(0,y)-K(0,0) H(0,0).
\end{equation}

\subsection*{Main results}
Equation \eqref{eq:functional_equation} implies that any generating series $H(x,y)$ of a harmonic function has the form 
\begin{equation}
\label{eq:form_H}
   H(x,y)=\frac{F(x)+G(y)}{K(x,y)},
\end{equation}
for some power series $F,G\in\mathbb{C}[[t]]$ (here and throughout, given a field $\mathbb{K}$, $\mathbb{K}[[t]]$ will denote the set of power series in $t$ with coefficients in $\mathbb{K}$). On the other hand, not any power series $F$ and $G$ are such that the right-hand side of \eqref{eq:form_H} defines a bivariate power series (indeed, in the case $p_{1,1}=0$, notice that $K(0,0)=0$). From that point of view, we will answer the following questions:
\begin{itemize}
   \item For which power series $F$ and $G$ is the function $H$ in \eqref{eq:form_H} a power series? (Observe that in the case $p_{1,1}\neq 0$, this is always the case, as $K(0,0)\neq0$.)
   \item In case the function $H$ is a bivariate power series, is it analytic in a neighbourhood of $(0,0)$? What is the associated radius of convergence?
   \item Which choice of $F$ and $G$ guarantees that the generating function $H$ has positive coefficients?
\end{itemize}

In order to state our main result, we need to introduce a certain curve as well as two related conformal mappings. This step is the basis of our entire analysis and is inspired by the books \cite{CoBo-83,Co-92}. To do so, we first introduce the domain
\begin{equation}
\label{eq:main_domain}
   \mathcal{K}=\{(x,y)\in\mathbb C^2 : K(x,y)=0 \text{ and } \vert x\vert = \vert y\vert \leq 1\}. 
\end{equation}
As it turns out (more details are to come in Section \ref{sec:func_eq}), its projection along the first variable defines a curve $\mathcal S_1$ which is closed, non-intersecting, symmetric with respect to the real axis and contains $1$; see Figure \ref{fig:some_curves} for a few examples. The bounded (resp.\ unbounded) domain whose boundary is $\mathcal S_1$ is denoted by $\mathcal S_1^+$ (resp.\ $\mathcal S_1^-$). Let also $\mathcal{H}^+$ (resp.\ $\mathcal{H}^-$) denote the interior of the right (resp.\ left) half-plane. Let $\psi_{1}$ be a conformal mapping from $\mathcal S_1^+$ to $\mathcal{H}^+$, with the conditions $\psi_{1}(0)=p_{1,1}$ and $\psi_{1}'(0)>0$. Finally, introduce for any $n\geq 0$ the polynomial
\begin{equation}
\label{eq:def_P_n}
   P_{n}=\bigl(X^2-p_{1,1}^2\bigr)^{\lfloor n/2\rfloor}X^{n\,[2]},
\end{equation}
where $n\,[2]$ stands for $n$ modulo $2$. Obviously, the family $\{ P_{n}\}_{n\geq 0}$ is a basis of the set of real polynomials. The first few polynomials $P_n$ are:
\begin{equation*}
  P_0=1,\quad P_1=X,\quad P_2=X^2-p_{1,1}^2,\quad P_{3}=X^3-p_{1,1}^2X,\quad \text{etc.}
\end{equation*}

\begin{figure}
\begin{center}
\includegraphics[height=4.0cm]{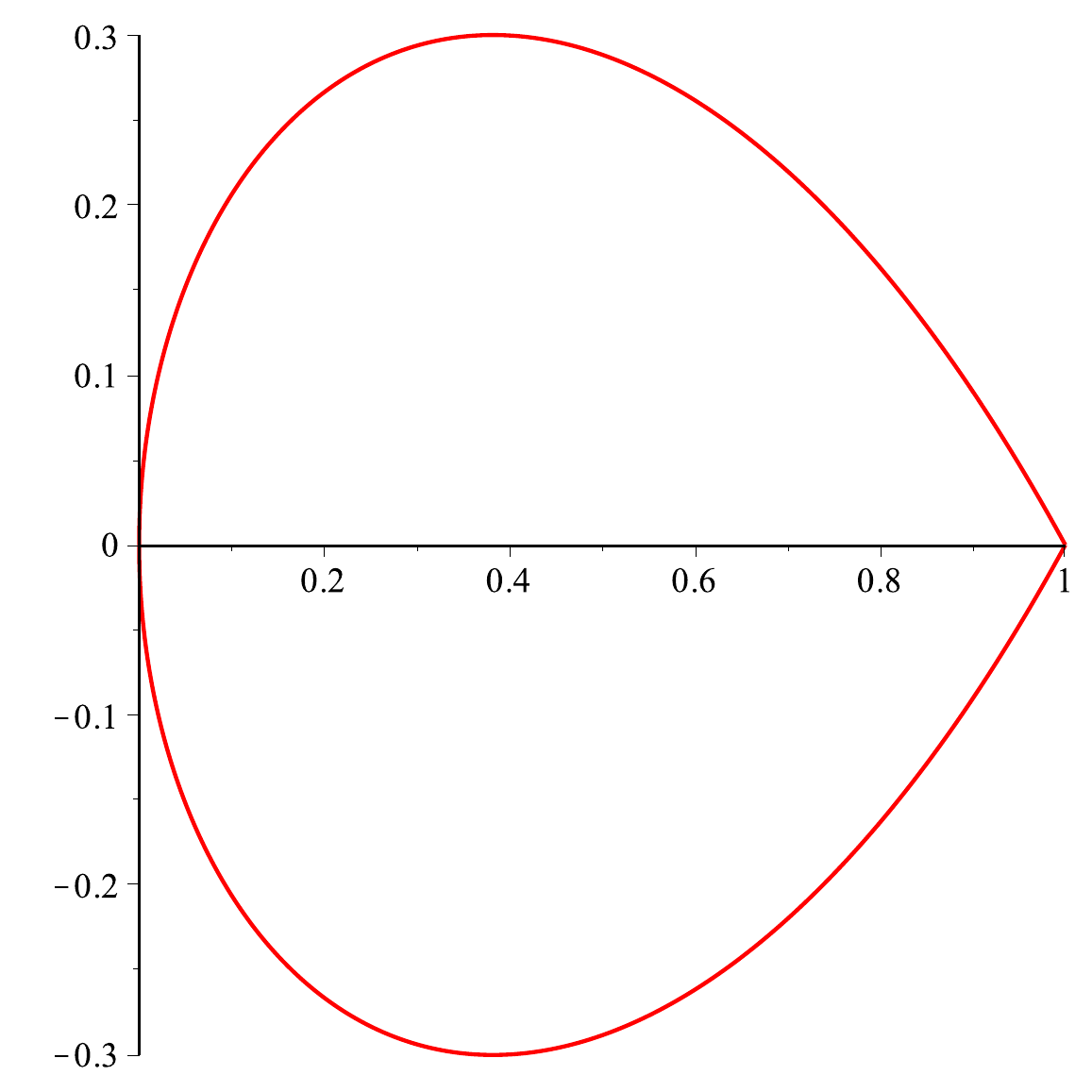}\quad
\includegraphics[height=4.0cm]{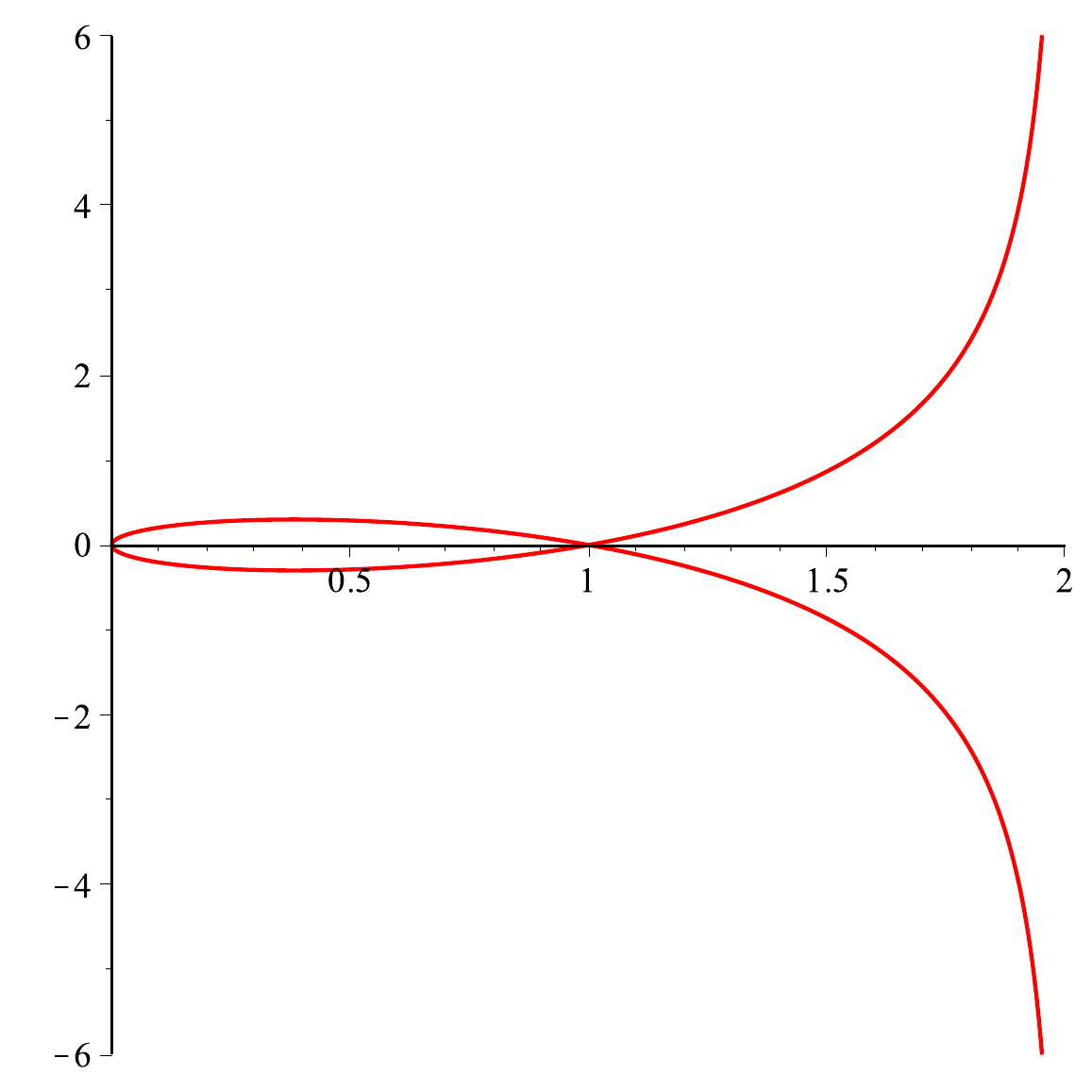}\quad
\includegraphics[height=4.0cm]{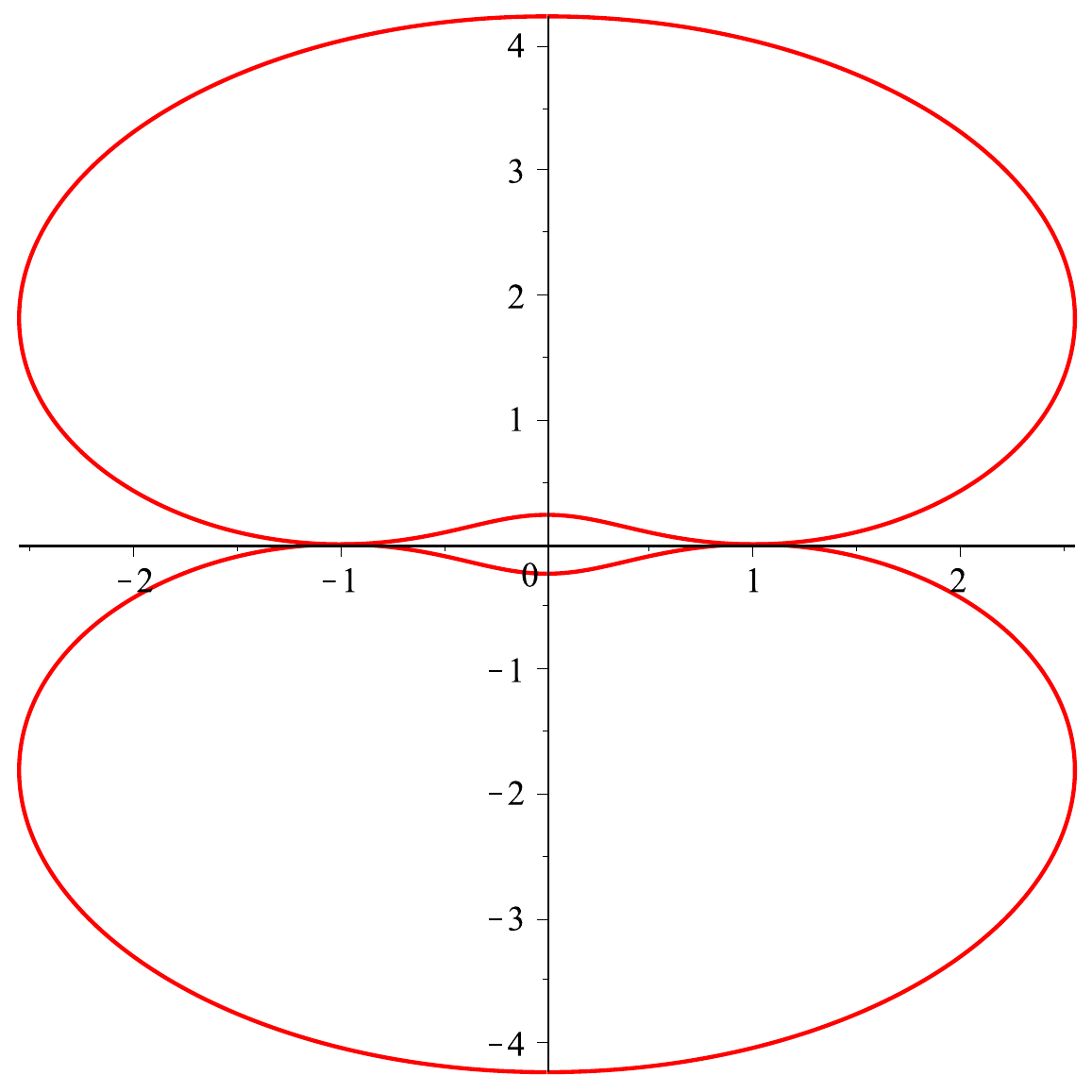}
\end{center}
\caption{The curve $\mathcal S_1$ for the simple random walk (left). It is included in the parametric curve \eqref{eq:expression_S1_SRW} with self-intersection (middle). On the right, the parametric curve for the (non-symmetric) walk with jumps $(-1,1),(1,0),(0,-1)$. The curve $\mathcal S_1$ is the intersection with the unit disk.}
\label{fig:some_curves}
\end{figure}

\begin{theorem}
\label{thm:main_intro-1}
For any $n\geq 1$, the function $\frac{P_n(\psi_{1}(x))-P_n(-\psi_{1}(y))}{K(x,y)}$ defines a bivariate power series
\begin{equation}
\label{eq:expression_H_n}
   H_{n}(x,y)=\sum_{i,j\geq 1}h_n(i,j) x^{i-1}y^{j-1}=\frac{P_{n}(\psi_{1}(x))-P_{n}(-\psi_{1}(y))}{K(x,y)},
\end{equation}
which satisfies the following properties:
\begin{enumerate}[label=\textnormal{(\arabic{*})},ref=\textnormal{(\arabic{*})}]
   \item\label{thm:main_intro-1:it1}Its Taylor coefficients $\{h_n(i,j)\}_{i,j\geq 1}$ form a discrete harmonic function for the Laplacian operator \eqref{eq:def_Laplacian}.
   \item\label{thm:main_intro-1:it2}$H_n$ is analytic for $x,y\in \mathcal{S}_1^+$.
   \item\label{thm:main_intro-1:it3}Denote by $h_{n}^{\sigma}$ the continuous harmonic function as in \eqref{eq:h_n^sigma}, with $\sigma$ being the covariance matrix \eqref{eq:covariance_matrix} associated to the transition probabilities $p_{k,\ell}$, and set 
   	\begin{equation}
	\label{eq:angle_at_1}
		\theta = \arccos\frac{-\sigma_{12}}{\sqrt{\sigma_{1}\sigma_{2}}}= \arccos\frac{-\sum k\ell p_{k,\ell}}{\sqrt{\sum k^2p_{k,\ell}}\sqrt{\sum \ell^2p_{k,\ell}}}.
	\end{equation}
For $x,y>0$, one has $\frac{1}{m^{n\pi/\theta+1}}h_{n}(\lfloor mx\rfloor ,\lfloor my\rfloor)\to  h_{n}^{\sigma}(x,y)$ as $m$ goes to infinity, in the sense of their Laplace transforms.
\end{enumerate}
\end{theorem}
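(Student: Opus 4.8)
The plan is to establish the three assertions in order, since each relies on the previous ones. For item \ref{thm:main_intro-1:it1}, I would first observe that it suffices to prove that the right-hand side of \eqref{eq:expression_H_n} is a genuine bivariate power series; once that is known, the harmonicity is automatic. Indeed, multiplying by $K(x,y)$ gives $K(x,y)H_n(x,y)=P_n(\psi_1(x))-P_n(-\psi_1(y))$, which is visibly of the form $F(x)+G(y)$, so that \eqref{eq:functional_equation} holds with $K(x,0)H_n(x,0)=P_n(\psi_1(x))+c$ and $K(0,y)H_n(0,y)=-P_n(-\psi_1(y))+c$ for the appropriate constant $c=K(0,0)H_n(0,0)=P_n(p_{1,1})$ (using $\psi_1(0)=p_{1,1}$); expanding \eqref{eq:functional_equation} in powers of $x,y$ then recovers exactly the harmonicity relations $\Delta(h_n)(i,j)=0$ for $i,j\ge 1$, and condition \ref{H1:harmonic} holds because the sum in \eqref{eq:expression_H_n} runs over $i,j\ge 1$ only. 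To show that $H_n$ is a power series, I would argue on the curve $\mathcal S_1$: for $(x,y)\in\mathcal K$ one has $K(x,y)=0$ and $|x|=|y|$, and the key algebraic input (to be extracted from the analysis of $\mathcal K$ in Section \ref{sec:func_eq}) is that on this set $\psi_1(x)$ and $-\psi_1(y)$ coincide, so the numerator vanishes wherever the denominator does, with matching multiplicity. A Weierstrass-division / residue argument along $\mathcal S_1$ then shows the quotient extends holomorphically across the zero set of $K$ inside the polydisk, hence is a power series with the stated radius of convergence.

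For item \ref{thm:main_intro-1:it2}, analyticity on $\mathcal S_1^+\times\mathcal S_1^+$ follows from the same representation: $\psi_1$ is by construction a conformal (hence analytic) map on $\mathcal S_1^+$, $P_n$ is a polynomial, and the only possible singularities of the quotient are on $\{K=0\}$, which inside $\mathcal S_1^+\times\mathcal S_1^+$ is precisely the locus where numerator and denominator vanish together; the removability argument from item \ref{thm:main_intro-1:it1} applies verbatim there. One should check that $\psi_1$ extends continuously (in fact analytically, by Schwarz reflection across the real segment and across $\mathcal S_1$ where the boundary is analytic) to a neighbourhood of $\overline{\mathcal S_1^+}$, which handles boundary behaviour.

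For item \ref{thm:main_intro-1:it3}, the strategy is to pass to Laplace transforms and use a scaling limit. Writing the Laplace transform of $m^{-(n\pi/\theta+1)}h_n(\lfloor mx\rfloor,\lfloor my\rfloor)$ as a rescaled version of $H_n$ evaluated near the singular point, the dominant contribution comes from the behaviour of $H_n(x,y)$ as $(x,y)\to(1,1)$, i.e. as $\psi_1(x),\psi_1(y)\to 0$. Near $x=1$ the curve $\mathcal S_1$ has an analytic corner; the local geometry of $\mathcal K$ at $(1,1)$ is governed by the Hessian of $K$, which is exactly the covariance matrix $\sigma$, and the opening half-angle is the $\theta$ of \eqref{eq:angle_at_1}. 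Consequently $\psi_1(x)\sim c(1-x)^{\pi/\theta}$ as $x\to 1$ in the appropriate sense (the conformal map straightens the wedge of half-opening $\theta$ to the half-plane of opening $\pi/2$, hence the exponent $\pi/(2\theta)$ per side, combining to $\pi/\theta$), and $P_n(\psi_1(x))\sim c^n(1-x)^{n\pi/\theta}$ since $P_n(X)\sim X^n$ at $X=0$. Plugging this into \eqref{eq:expression_H_n} and using $K(x,y)\sim$ quadratic form in $(1-x),(1-y)$ with matrix $\sigma$, a Tauberian/singularity-analysis computation identifies the limiting Laplace transform with that of $h_n^\sigma$, which by \eqref{eq:h_n^sigma} is built from the same wedge of angle $\theta$ and the same covariance $\sigma$; matching the two is then a direct calculation with the explicit continuous harmonic functions.

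The main obstacle, I expect, is the removability/power-series step in item \ref{thm:main_intro-1:it1}: one must show not merely that the numerator of \eqref{eq:expression_H_n} vanishes on $\{K=0\}\cap\{|x|=|y|\le 1\}$ but that this forces the quotient to be holomorphic in a full bidisk (equivalently, that the two sections $H_n(x,0)$ and $H_n(0,y)$ are themselves power series and assemble correctly via \eqref{eq:functional_equation}). This requires a careful use of the structure of $\mathcal K$ established in Section~\ref{sec:func_eq} — in particular the identification of $\psi_1$ on $\mathcal S_1$ with (minus) its counterpart in the second variable — together with a Weierstrass-preparation argument to control multiplicities of $K$ along $\mathcal S_1$, including at the singular point $(1,1)$ where the vanishing is of higher order and the geometry degenerates. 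The asymptotic item \ref{thm:main_intro-1:it3} is technically demanding but conceptually routine once the precise local exponent $\pi/\theta$ of $\psi_1$ at $1$ is pinned down.
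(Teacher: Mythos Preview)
Your outline for item \ref{thm:main_intro-1:it1} is essentially the paper's argument: reduce to showing the quotient is analytic near $(0,0)$, use that $\psi_1(x)=-\psi_1(y)$ on $\mathcal K$ near the origin, and apply Weierstrass preparation. One minor point: the analysis is purely local at $(0,0)$, so the behaviour at the corner $(1,1)$ is irrelevant here and need not be controlled.

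There are, however, two genuine errors in the later items. For item \ref{thm:main_intro-1:it2}, you assert that the zero set of $K$ inside $\mathcal S_1^+\times\mathcal S_1^+$ is ``precisely the locus where numerator and denominator vanish together'' and that the removability argument from \ref{thm:main_intro-1:it1} applies. But the identity $\psi_1(x)=-\psi_1(y)$ is only established on $\mathcal K$, which lies on the \emph{boundary} $\mathcal S_1\times\mathcal S_1$; there is no reason it should hold on interior zeros of $K$. The paper instead proves (Lemma~\ref{lemma:no_solution}) that $K$ has \emph{no} zeros in $\mathcal S_1^+\times\mathcal S_1^+$ at all, so analyticity is immediate once the numerator is analytic there. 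Without that lemma your argument has a gap.

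For item \ref{thm:main_intro-1:it3}, the direction of the singularity is reversed. The conformal map $\psi_1$ sends $\mathcal S_1^+$ to the right half-plane with $\psi_1(0)=p_{1,1}$ and $\psi_1(1)=\infty$ (equivalently $\psi_{10}(\infty)=1$); hence as $x\to 1$ one has $\psi_1(x)\sim c(1-x)^{-\pi/\theta}\to\infty$, not $\to 0$. Consequently $P_n(\psi_1(x))\sim c^n(1-x)^{-n\pi/\theta}$ (using $P_n(X)\sim X^n$ at $X=\infty$, not at $X=0$), which is what produces the growth exponent $n\pi/\theta+1$ after dividing by $K\sim(\text{quadratic form})/m^2$. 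With your stated asymptotic $(1-x)^{+n\pi/\theta}$ the numerator would vanish and the scaling would be wrong. Once this sign is corrected, the rest of your sketch for \ref{thm:main_intro-1:it3} matches the paper's direct Laplace-transform computation.
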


Our second main result shows that the harmonic functions $h_n$ introduced in Theorem~\ref{thm:main_intro-1} actually describe the whole space of discrete harmonic functions. 
\begin{theorem}
\label{thm:main_intro-2}
The space of discrete harmonic functions is isomorphic to the vector space of formal power series $\mathbb{R}_{0}[[t]]$ with vanishing constant term, through the isomorphism 
\begin{equation*}
   \Phi:\left\lbrace\begin{matrix}
\mathbb{R}_0[[t]]&\longrightarrow&H(\mathbb N^2)\\
\sum_{n\geq 1}a_{n}t^n&\longmapsto& \sum_{n\geq 1}a_{n}h_n
\end{matrix}\right.
\end{equation*}
\end{theorem}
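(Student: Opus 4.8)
The statement splits into three claims—that $\Phi$ is well defined, injective, and surjective—and all three rest on one valuation estimate for the series in \eqref{eq:expression_H_n}. Since $\psi_1(0)=p_{1,1}$ and $\psi_1'(0)>0$, the series $\psi_1(x)^2-p_{1,1}^2$ has valuation $1$ in $x$, with leading coefficient $2p_{1,1}\psi_1'(0)$; hence by \eqref{eq:def_P_n} the series $P_n(\psi_1(x))$ has valuation $\lfloor n/2\rfloor$ in $x$, with leading coefficient $(2p_{1,1}\psi_1'(0))^{\lfloor n/2\rfloor}$ when $n$ is even and $(2p_{1,1}\psi_1'(0))^{\lfloor n/2\rfloor}p_{1,1}$ when $n$ is odd, and likewise $P_n(-\psi_1(y))$ has valuation $\lfloor n/2\rfloor$ in $y$, but with the opposite sign in the odd case. (When $p_{1,1}=0$ one has $P_n=X^n$ and the valuations are $n$.) As the numerator of \eqref{eq:expression_H_n} is a pure series in $x$ plus a pure series in $y$ it has total valuation $\lfloor n/2\rfloor$, so—$K$ being a unit of $\mathbb R[[x,y]]$ when $p_{1,1}\neq0$—the power series $H_n$ has valuation exactly $\lfloor n/2\rfloor$ (and $\lfloor n/2\rfloor-\operatorname{val}K$ in general). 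In particular $h_n(i,j)=0$ as soon as $n$ exceeds a bound linear in $i+j$.

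\emph{Well-definedness and injectivity.} By this estimate $\sum_{n\geq1}a_n h_n$ is, at each $(i,j)$, a finite sum, hence a well-defined function; being a locally finite combination of the discrete harmonic functions $h_n$ (Theorem~\ref{thm:main_intro-1}\,\ref{thm:main_intro-1:it1}) it lies in $H(\mathbb N^2)$, and $\Phi$ is clearly linear. Suppose $\Phi(\sum a_n t^n)=0$. Coefficientwise, $\sum_n a_n H_n=0$; multiplying by $K(x,y)$ and summing termwise (allowed by the valuation bound) gives $\sum_n a_n P_n(\psi_1(x))=\sum_n a_n P_n(-\psi_1(y))$, so both sides equal one and the same constant $c$. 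Comparing constant terms yields $a_1p_{1,1}=c=-a_1p_{1,1}$, whence $c=0$ and $a_1=0$; and, assuming $a_1=\dots=a_{2k-1}=0$, vanishing of the coefficient of $x^k$ on the left and of $y^k$ on the right is exactly the linear system in $(a_{2k},a_{2k+1})$ with matrix $\left(\begin{smallmatrix}1&p_{1,1}\\1&-p_{1,1}\end{smallmatrix}\right)$, invertible since $p_{1,1}\neq0$, so $a_{2k}=a_{2k+1}=0$. (For $p_{1,1}=0$ the identity is $\sum_n a_n\psi_1(x)^n=c$ with $\psi_1$ of $x$-valuation $1$, forcing all $a_n=0$ at once.) Hence $\Phi$ is injective.

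\emph{Surjectivity.} I would argue filtration by filtration. For $m\geq1$ let $\mathcal F_m\subset H(\mathbb N^2)$ be the harmonic functions supported on $\{i+j\geq m+2\}$, i.e.\ with generating series of valuation $\geq m$. Since $h_n\in\mathcal F_m$ for $n\geq2m$, the map $\Phi$ descends to $\overline\Phi_m\colon\mathbb R_0[[t]]/t^{2m}\mathbb R[[t]]\to H(\mathbb N^2)/\mathcal F_m$, whose source has dimension $2m-1$. The target also has dimension $2m-1$: propagating $\Delta(h)(i,j)=0$ along antidiagonals (all terms other than $h(i+1,j+1)$, whose coefficient is $p_{1,1}$, sit on strictly smaller antidiagonals) shows a harmonic function is freely and uniquely determined by $(h(i,1))_{i\geq1}$ and $(h(1,j))_{j\geq2}$, of which exactly the $2m-1$ entries of index sum $\leq m+1$ influence the monomials of $H$ of degree $<m$. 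The computation of the injectivity step shows $\overline\Phi_m$ is injective, hence an isomorphism. Given $h\in H(\mathbb N^2)$, inverting all $\overline\Phi_m$ produces (compatibly in $m$, since the $\overline\Phi_m$ all come from the one map $\Phi$) a sequence $(a_n)_{n\geq1}$ with $h-\sum_{n=1}^{2m-1}a_n h_n\in\mathcal F_m$ for every $m$; as $\bigcap_m\mathcal F_m=0$, this means $h=\sum_{n\geq1}a_n h_n=\Phi(\sum a_n t^n)$. (When $p_{1,1}=0$ the same works with $2m$ replaced by $m+1$ and $2m-1$ by $m$.)

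\emph{Where the real work is.} Granting Theorem~\ref{thm:main_intro-1}, the above is bookkeeping; the genuine difficulty—setting up and solving the boundary value problem attached to \eqref{eq:functional_equation} on the curve $\mathcal S_1$, transplanting it through $\psi_1$, and controlling the conformal gluing so that the right-hand side of \eqref{eq:expression_H_n} is a power series at all—has already been spent upstream to produce \eqref{eq:expression_H_n}. An alternative, more constructive proof of surjectivity (presumably the one behind the ``algebra generated by one element'' phrasing) reasons directly on $H(x,y)=\frac{F(x)+G(y)}{K(x,y)}$: the BVP together with the symmetry \ref{H1:jumps} (which makes $K(x,y)=K(y,x)$) forces $F(x)=\Lambda(\psi_1(x))$, $G(y)=-\Lambda(-\psi_1(y))$ for a single entire function $\Lambda$—the boundary relation becoming the vanishing of the jump of $F\circ\psi_1^{-1}$ across the imaginary axis—and then expanding $\Lambda$ in the basis $\{P_n\}$ of $\mathbb R[X]$ gives $H=\sum_n a_n H_n$. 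In that route the delicate point, and the one I expect to be hardest, is proving that $\Lambda$ is genuinely entire with no spurious poles and that every entire $\Lambda$ is realised.
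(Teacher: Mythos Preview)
Your argument is essentially correct for $p_{1,1}\neq 0$ and close in spirit to the paper's, though organized differently. The paper proves the same valuation/vanishing facts (its Lemmas~\ref{lemma:triangle_lemma} and~\ref{lemma:rectangle_lemma}) and then, rather than a filtration/dimension count, writes down an explicit block--lower--triangular system in the boundary data $(h(i,1))_{i\geq 1}$, $(h(1,j))_{j\geq 2}$ (Lemmas~\ref{lemma:consequence_triangle_lemma} and~\ref{lemma:consequence_rectangle_lemma}); the invertible $2\times 2$ blocks on the diagonal are, up to scalar, exactly your matrix $\left(\begin{smallmatrix}1&p_{1,1}\\1&-p_{1,1}\end{smallmatrix}\right)$. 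Your inverse-limit packaging is arguably cleaner, and your observation that the injectivity induction only consumes coefficients of degree $<m$---hence already proves injectivity of each $\overline{\Phi}_m$---is the right way to see it.

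There is a genuine gap in the $p_{1,1}=0$ parenthetical. Your antidiagonal propagation solves the harmonicity relation at $(i,j)$ for $h(i+1,j+1)$, which is impossible when $p_{1,1}=0$. You need a different propagation: since $p_{0,1}=p_{1,0}>0$ (forced by \ref{H1:jumps} and \ref{H3:jumps}), harmonicity at $(i,j)$ can be solved for $h(i,j+1)$ in terms of values in rows $\leq j$, so a harmonic function is freely determined by $(h(i,1))_{i\geq 1}$ alone, and $\dim H(\mathbb N^2)/\mathcal F_m=m$. With this fix your filtration argument goes through unchanged. The paper handles this case by a genuinely different device: after matching $(h(i,1))$ via a triangular system it invokes the Weierstrass division theorem in $\mathbb R[[x,y]]$ to show that, once $F$ is fixed, the $G$ in $H=(F(x)+G(y))/K(x,y)$ is unique---so the second boundary is automatically matched. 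Your route avoids Weierstrass division entirely, which is a mild gain.

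Your closing speculation misreads the paper's strategy. The BVP (Proposition~\ref{prop:BVP} and Corollary~\ref{cor:solutions-poly_BVP}) is used only to \emph{discover} the form~\eqref{eq:expression_H_n}, and under the analytic hypothesis that $H(x,0)$, $H(0,y)$ converge on the unit disk---which fails for a general formal harmonic function. The proof of Theorem~\ref{thm:main_intro-2} itself is, like yours, purely formal.
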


Various examples illustrating Theorems \ref{thm:main_intro-1} and \ref{thm:main_intro-2} will be given in Section \ref{sec:examples}. Let us now describe the main features of these theorems:
\begin{itemize}
   \item In the case $p_{1,1}= 0$, Theorems \ref{thm:main_intro-1} and \ref{thm:main_intro-2} imply that a function $\{h(i,j)\}_{i,j\geq 1}$ with generating function $H(x,y)$ is harmonic if and only if there exists a formal power series $F=\Phi^{-1}(h)$ such that
   \begin{equation}
   \label{eq:main-intro-2}
       H(x,y)=\frac{F(\psi_{1}(x))-F(-\psi_{1}(y))}{K(x,y)}.
   \end{equation}
   In the case $p_{1,1}\neq 0$, the expression of $H$ in terms of $\Phi^{-1}(h)$ is not so direct, and \eqref{eq:main-intro-2} should be replaced by \eqref{eq:form_H} for some functions $F$ and $G$ depending on $\Phi^{-1}(h)$ in a non-trivial way.
   
   The different behavior of the functional equation \eqref{eq:functional_equation} for a vanishing $p_{1,1}$ is not only of technical nature. Indeed, as we will see in Lemmas \ref{lemma:consequence_triangle_lemma} and \ref{lemma:consequence_rectangle_lemma}, specifying the values of a harmonic function on the horizontal axis $\{(i,1):i\geq 1\}$ uniquely determines the harmonic function in the $p_{1,1}=0$ case, whereas in the $p_{1,1}\not=0$ case, one also needs to give the values of the function on the vertical axis $\{(1,j):j\geq 1\}$ to fully characterize it.
   
   However, Theorem \ref{thm:main_intro-1} looks the same in both cases, because we choose to identify in the case $p_{1,1}\not=0$ a particular set of harmonic functions that behave like in the vanishing case (with respect to the assertion \ref{thm:main_intro-1:it3} of Theorem~\ref{thm:main_intro-1}, for example). Then, the use of power series in $h_n$ in Theorem \ref{thm:main_intro-2} allows us to reconstruct in the case $p_{1,1}\not=0$ all the harmonic functions from this particular set of harmonic functions.
   
   In the framework of this article, we say that a function $F$ characterizes a harmonic function $h$ if its generating function can be written as \eqref{eq:main-intro-2}.
   \item Part of the results of Theorem \ref{thm:main_intro-2} is that for any formal power series $F$ (even with a zero radius of convergence), the formula \eqref{eq:main-intro-2} defines formal power series (see our Lemmas \ref{lemma:triangle_lemma} and \ref{lemma:rectangle_lemma}) and eventually discrete harmonic functions.
   \item This correspondence between harmonic functions and formal power series may be refined as follows: writing $h(i,j)=\frac{h(i,j)+h(j,i)}{2}+\frac{h(i,j)-h(j,i)}{2}$, one may decompose a harmonic function as a sum of a symmetric harmonic function and another anti-symmetric harmonic function. Then in Theorem \ref{thm:main_intro-2}, symmetric (resp.\ anti-symmetric) harmonic functions correspond to even (resp.\ odd) power series $F(t)=\sum_{n\geq 1}a_{n}t^n$, see Section~\ref{sec:sym_antisym}.
   \item In Equation \eqref{eq:main-intro-2}, the only dependence in the model $\{p_{k,\ell}\}$ relies in the conformal mapping $\psi_{1}$. The fact that we may take any power series $F$ is independent of the model and therefore appears as a universal feature.
   \item It is known from \cite{BoMuSi-15,IR-20,DuRaTaWa-20} that there exists a unique function which is both positive and harmonic for the Laplacian operator \eqref{eq:def_Laplacian}, as soon as the walk admits moments of order high enough. We conjecture (and bring some evidence)\ that this unique positive harmonic function corresponds to $F(t)=t$ in \eqref{eq:main-intro-2}. In this sense, positivity of the harmonic function corresponds to minimality in terms of polynomial degree.

\begin{conj}
\label{conj:pos}
The unique (up to multiplicative factors) positive harmonic function associated to the Laplacian operator \ref{H1:jumps}--\ref{H5:jumps} is $h_1$ in Theorem \ref{thm:main_intro-1}.
\end{conj}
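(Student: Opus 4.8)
\textbf{Proof strategy for Conjecture \ref{conj:pos}.}
The natural plan is to show that $h_1$ is positive and then invoke the known uniqueness of the positive harmonic function from \cite{BoMuSi-15,IR-20,DuRaTaWa-20}. So the entire task reduces to proving positivity of $h_1$, whose generating function is $H_1(x,y) = \frac{\psi_1(x)+\psi_1(y)}{K(x,y)}$ by \eqref{eq:expression_H_n} and \eqref{eq:def_P_n} (since $P_1 = X$ and $P_1(-\psi_1(y)) = -\psi_1(y)$). First I would record the boundary values: by Theorem \ref{thm:main_intro-1}\ref{thm:main_intro-1:it2}, $H_1$ is analytic on $\mathcal S_1^+ \times \mathcal S_1^+$, and since $\psi_1$ maps $\mathcal S_1^+$ conformally onto the right half-plane $\mathcal H^+$ with $\psi_1$ real on the real segment of $\mathcal S_1$, the section $H_1(x,0) = \frac{\psi_1(x)+p_{1,1}}{K(x,0)}$ would be analyzed directly: one shows its Taylor coefficients $h_1(i,1)$ are nonnegative by studying the sign of $\psi_1$ and of $K(x,0) = x - \sum_k p_{k,0}x^{1-k}$ on the relevant real interval, possibly after a partial-fraction or boundary-regularity argument near the point $1 \in \mathcal S_1$ where $K$ vanishes.

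The core of the argument, however, should exploit the known asymptotics rather than a brute-force coefficient computation. By Theorem \ref{thm:main_intro-1}\ref{thm:main_intro-1:it3} with $n=1$, we have $m^{-(\pi/\theta+1)} h_1(\lfloor mx\rfloor, \lfloor my\rfloor) \to h_1^\sigma(x,y)$ (in the Laplace-transform sense), and $h_1^\sigma$ is \emph{the} positive continuous harmonic function in the wedge of opening $\theta$ — it has no sign changes. This gives positivity of $h_1$ "at infinity along every ray''. To upgrade this to positivity everywhere, I would argue by contradiction: suppose $h_1$ takes a negative value somewhere. Decompose the space of harmonic functions via Theorem \ref{thm:main_intro-2}: the unique positive harmonic function $h_{\mathrm{pos}}$ corresponds to some power series $F_{\mathrm{pos}} = \sum_{n\geq 1} a_n t^n \in \mathbb R_0[[t]]$ with, say, smallest vanishing order $n_0$. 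Then $m^{-(n_0\pi/\theta+1)} h_{\mathrm{pos}}(\lfloor mx\rfloor,\lfloor my\rfloor)$ converges to $a_{n_0} h_{n_0}^\sigma(x,y)$ by the same theorem applied to the dominant term, forcing $a_{n_0} h_{n_0}^\sigma \geq 0$; but $h_{n_0}^\sigma$ changes sign for $n_0 \geq 2$ (its nodal lines include the diagonal/anti-diagonal, cf.\ the discussion around Figure \ref{fig:nodal_lines}), so $n_0 = 1$. Thus $h_{\mathrm{pos}}$ and $h_1$ have the same leading term, and $h_{\mathrm{pos}} - c\, h_1$ (for the appropriate $c>0$) is a harmonic function corresponding to a series of order $\geq 2$; if this difference is nonzero, its rescaled limit is $\pm h_{n_1}^\sigma$ for some $n_1\geq 2$, which is sign-changing, contradicting... — and here is where care is needed, because the difference need not be positive, so a cleaner route is to show directly that $h_1$ itself is positive.

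For that direct route I would use a minimum-principle / induction argument on $\mathbb N^2$: the defining relation $\Delta(h_1)(i,j)=0$ together with $h_1 = 0$ on the boundary axes means that negativity cannot "originate'' in the interior, but because the walk has big negative jumps one cannot run a naive nearest-neighbor induction. Instead I would appeal to the probabilistic representation: any bounded-below harmonic function vanishing on the boundary that is, up to normalization, asymptotic to the positive $h_1^\sigma$, must coincide with the Doob $h$-transform harmonic function, which is positive; more precisely, one invokes the uniqueness result of \cite{DuRaTaWa-20,IR-20} in the sharper form that the \emph{minimal} harmonic function is unique and that \emph{any} harmonic function with the right (minimal) growth exponent $\pi/\theta+1$ and the right angular limit is a positive multiple of it. Since $h_1$ has exactly this growth by part \ref{thm:main_intro-1:it3}, and its continuous limit $h_1^\sigma$ is the positive harmonic function in the wedge, $h_1$ must be proportional to $h_{\mathrm{pos}}$, hence positive.

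\textbf{Main obstacle.} The delicate point is passing from the continuous limit in Theorem \ref{thm:main_intro-1}\ref{thm:main_intro-1:it3} (which only controls $h_1$ on the macroscopic scale, and only through Laplace transforms) to a genuine sign statement on every lattice point $(i,j)\in\mathbb N^2$. The convergence of Laplace transforms does not by itself preclude small negative dips at finite scale or oscillations that wash out in the limit. Bridging this gap requires either a quantitative/uniform version of the local limit behavior of $h_1$ — effectively a discrete Harnack-type estimate near the boundary, which is exactly the tool that fails in general for signed harmonic functions, as noted in the introduction — or a separate exact-positivity argument for the sections $H_1(x,0)$, $H_1(0,y)$ using the explicit conformal map $\psi_1$, followed by propagation into the bulk via the functional equation \eqref{eq:functional_equation}. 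I expect the honest status to be that the conjecture is proved only conditionally or with partial evidence (the introduction indeed says "bring some evidence''), the evidence being precisely the matching of leading asymptotics with the unique positive continuous harmonic function together with verification on the examples of Section \ref{sec:examples}.
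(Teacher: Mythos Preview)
You have correctly identified the situation: this statement is labeled a \emph{conjecture} in the paper and is \emph{not} proved there. Your concluding paragraph is exactly right --- the authors explicitly write that they ``conjecture (and bring some evidence)'' for it, and the obstacle you name (upgrading the Laplace-transform-scale asymptotics of Theorem~\ref{thm:main_intro-1}\ref{thm:main_intro-1:it3} to pointwise positivity at every lattice site) is precisely the gap that remains open.

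Where your proposal and the paper differ is in the nature of the evidence offered. Your outline is an abstract one: match the leading growth exponent and angular profile of $h_1$ to those of the unique positive harmonic function via the rescaling limit, then try to conclude by uniqueness. The paper instead gives a \emph{concrete verification in the simple random walk case} (Section~\ref{sec:SRW}, subsection ``Characterization of the positive harmonic function''): for that model they compute, for each monomial $G(t)=t^k$, the exact asymptotics of $[x^{p+1}y^{q+1}]H(x,y)$ along rays $(p,q)=(r_1,r_2)n$ as $n\to\infty$, and show the sign is governed by $\sin(2k\arctan(r_2/r_1))$, which changes sign on $(0,\infty)$ whenever $k\geq 2$. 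This rules out all higher-degree polynomials $F$ for the SRW and confirms the conjecture there. The remaining examples in Section~\ref{sec:examples} (king walk, Kreweras, the bipolar-orientation family) provide further support by exhibiting $h_1$ explicitly and checking it recovers the known positive harmonic function.

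Your abstract argument via the sign-change of $h_{n_0}^\sigma$ for $n_0\geq 2$ is morally the continuous shadow of the paper's SRW computation, and the obstruction you flag --- that Laplace-transform convergence cannot rule out finite-scale negative values --- is exactly why the paper does not attempt a general proof along those lines. So there is nothing to correct in your proposal; you should simply state clearly at the outset that no proof exists in the paper and that what follows is a discussion of partial evidence and the remaining gap.
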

   \item The polynomial growth of the unique positive harmonic function can be read off on the corner point of the curve $\mathcal S_1$ at $1$ (see Figure \ref{fig:some_curves}), or equivalently on the singularity of the conformal mapping $\psi_{1}$ at $1$.
   \item In the article \cite{DeWa-15}, optimal moment conditions are given, under which one can fully describe the asymptotic behavior of the random walk in the quarter plane. The authors of \cite{DeWa-15} further show by a counter-example the optimality of their moment conditions: namely, one needs moments of order at least $\max\{2+\epsilon,\pi/\theta\}$, where $\epsilon >0$ and $\theta$ is as in \eqref{eq:angle_at_1}. In our paper, such moment conditions are not required: we only need a moment of order $1$ to impose a zero drift on the random walk and a moment of order $2$ to be able to speak about the covariance and the angle $\theta$. Remark that the latter condition is not even required if we are not interested in the Assertion \ref{thm:main_intro-1:it3} of Theorem \ref{thm:main_intro-1} (see Section \ref{subsec:example_less_moments} for an example without $2+\epsilon$-moment and one without $\pi/\theta$-moment).
   
   However, there are two main differences between our setting and the one of \cite{DeWa-15}: first, we only admit positive jumps of size $1$. This is a strong assumption, and counter-examples exhibited in \cite{DeWa-15} use positive jumps of unbounded size. Second, we only construct harmonic functions and do not describe any asymptotic behavior of the random walk. It would be very interesting to see whether this analytical approach allows to tackle asymptotic analysis with moment assumptions smaller than $2$.
   \item The vector space of harmonic functions with growth bounded by some constant $g$ is finite-dimensional, and amounts to taking $F\in \mathbb C_n[t]$ for some $n$ related to $g$.
   \item A much weaker statement of Theorem \ref{thm:main_intro-1} (small step random walks and only $F(t)=t$) is given in \cite{Ra-14}; see Section \ref{sec:uniformization} for a more detailed comparison between our approach and the one in \cite{Ra-14}.
   \item The conformal mapping $\psi_{1}$ is uniquely characterized by the Riemann mapping theorem. We show how to obtain its explicit expression in the small step case, as well as in a few models with larger steps. However, in general, despite the particular structure of the set \eqref{eq:main_domain}, there is little hope to derive concrete expressions for these conformal mappings.
   \item We will further explain how, given a (finite or infinite)\ number of boundary values, we may construct (and give a formula for)\ a harmonic function $h$ having these values.
   \item Interestingly, by Theorem \ref{thm:main_intro-2} one can transfer the algebra structure of $\mathbb{R}_0[[t]]$ into an algebra structure on the space of harmonic functions $H(\mathbb{N}^2)$. Formally, the multiplication is defined on $H(\mathbb{N}^2)$ by the formula 
\begin{equation*}
   h\cdot h'=\Phi\bigl(\Phi^{-1}(h)\Phi^{-1}(h')\bigr),\quad \forall h,h'\in H(\mathbb{N}^2).
\end{equation*}
This implies in particular that $h_n\cdot h_m=h_{n+m}$, and that $H(\mathbb{N}^2)$ is generated as an algebra by the (conjecturally) unique positive harmonic function $h_1$. 
\end{itemize}

\subsection*{Acknowledgments} We would like to thank \'Eric Fusy for enlightening discussions concerning Section~\ref{sec:SRW}. We further thank Gerold Alsmeyer, Onno Boxma and Fran\c cois Chapon for interesting discussions. We warmly thank an anonymous referee for extremely useful comments: she/he made us realize that our moment assumptions could be considerably generalized.

\section{Study of the kernel}
\label{sec:func_eq}

In this section, the main objective is the description of the set $\mathcal{K}$ introduced in \eqref{eq:main_domain}. To that purpose, in Section~\ref{subsec:preliminary}, we first recall from \cite{CoBo-83} some key properties of the two-variable function $K$ defined in \eqref{eq:def_K}. We also give a few elementary properties of the curve \eqref{eq:main_domain}. In Section~\ref{subsec:corner}, we prove a new result on the existence of a corner point of the curve \eqref{eq:main_domain} and compute the associated angle. This angle turns out to be very important, as we will prove later that it is strongly related to the growth of harmonic functions, as stated in Theorem \ref{thm:main_intro-1} \ref{thm:main_intro-1:it2}.

Throughout the manuscript, $\mathcal{C}$ will denote the unit circle and $\mathcal{C}^+$ (resp.\ $\mathcal{C}^-$, $\overline{\mathcal{C}^{+}}$) will stand for the interior (resp.\ exterior, resp.\ closed interior) domain, i.e., the open unit disk (resp.\ the complex plane deprived from the closed unit disk, resp.\ the closed unit disk). We shall also denote the bidisk by $\mathcal{U}=\mathcal{C}^+\times\mathcal{C}^+=\lbrace (x,y)\in \mathbb{C}^2:\vert x\vert< 1,\vert y\vert< 1\rbrace$.

\subsection{Preliminary results on the solutions of the kernel equation}
\label{subsec:preliminary}

To describe the domain $\mathcal K$ in \eqref{eq:main_domain}, remark that by \eqref{eq:def_K} we have
\begin{equation}
\label{eq:kernel_eq_eta_s}
    K(\eta s,\eta s^{-1}) = \eta ^2-\sum p_{k,\ell}\eta ^{-k-\ell+2}s^{-k+\ell}
\end{equation}
when $s\not=0$. For $(x,y)\in \mathcal{C^+}$ with $\vert x\vert=\vert y\vert$, we write
\begin{equation}
\label{eq:x_y_eta_s}
    x = \eta s\quad \text{and}\quad y = \eta s^{-1},
\end{equation}
with $\eta\in\mathbb{C}$ and $\vert s\vert=1$.

The lemma hereafter presents some properties of the roots of the kernel \eqref{eq:kernel_eq_eta_s}. Remark that in Lemma \ref{lemma:kernel} below, and in many other statements as well, two cases will be considered separately, according to whether $p_{1,1}=0$ or not.

\begin{lemma}
\label{lemma:kernel}
	Assume \ref{H1:jumps}--\ref{H5:jumps}. For $\vert s\vert=1$, the equation $K(\eta s ,\eta s^{-1})=0$ admits exactly two solutions in $\overline{\mathcal{C}^+}$. Moreover, the following assertions hold:
	\begin{enumerate}[label=\textnormal{(\roman{*})},ref=\textnormal{(\roman{*})}]
		\item\label{item:kernel_p11=0}If $p_{1,1}= 0$, one solution is identically zero and the other one, denoted by $\eta(s)$, is real and varies in $[-1,1]$. Furthermore, $\eta(1)=1$, $\eta(-1)=-1$ and $\vert\eta(s)\vert<1$ for all $\vert s\vert=1$ with $s\neq \pm 1$.
		\item\label{item:kernel_p11_not=0} If $p_{1,1}\neq 0$, the two solutions, denoted by $\eta_1(s)$ and $\eta_2(s)$, are real and satisfy, for all $\vert s\vert =1$, $\eta_2(s) = -\eta_1(-s)$. Further, $\eta_1(1)=1$ and $\eta_1(s)\in(0,1)$ for all $\vert s\vert=1$ with $s\neq 1$.
	\end{enumerate}
\end{lemma}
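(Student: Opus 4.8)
The plan is to substitute $x=\eta s$, $y=\eta s^{-1}$ with $|s|=1$ and study the polynomial equation \eqref{eq:kernel_eq_eta_s} in the variable $\eta$, treating $s$ on the unit circle as a parameter. First I would rewrite $K(\eta s,\eta s^{-1})=0$ as
\begin{equation*}
   \eta^2 = \sum_{k,\ell} p_{k,\ell}\,\eta^{-k-\ell+2}s^{\ell-k},
\end{equation*}
and clear denominators to get a genuine polynomial $Q_s(\eta)=0$; because of hypothesis \ref{H2:jumps} the negative exponents $-k-\ell+2$ are bounded above (by the range of the big negative jumps), so multiplying through by a suitable power of $\eta$ gives a polynomial of controlled degree. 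The claim "exactly two solutions in $\overline{\mathcal{C}^+}$" should then come from a Rouché-type argument on the unit circle $|\eta|=1$: on that circle one compares $|\eta^2|=1$ with $\big|\sum p_{k,\ell}\eta^{-k-\ell+2}s^{\ell-k}\big|$, which by the triangle inequality and $\sum p_{k,\ell}=1$ is at most $1$, with equality governed precisely by \ref{H3:jumps}. So for generic $s$ one gets a strict inequality and Rouché applies directly; the boundary cases $s=\pm1$ (where equality can hold at $\eta=1$ or $\eta=-1$) must be handled by a limiting/perturbation argument or by inspecting the root location by hand. This is essentially the content already attributed to \cite{CoBo-83}, so I would cite it and only sketch the verification.

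Next I would split according to whether $p_{1,1}=0$. If $p_{1,1}=0$, then the lowest power of $\eta$ appearing in $\eta^2 - \sum p_{k,\ell}\eta^{-k-\ell+2}s^{\ell-k}$ after clearing denominators still has $\eta$ as a factor of the whole polynomial — more precisely the term $k=\ell=1$ is absent, so $\eta=0$ solves the kernel equation; this gives the "identically zero" solution. For the second root $\eta(s)$, reality follows from the symmetry hypothesis \ref{H1:jumps}: swapping $k\leftrightarrow\ell$ sends $s^{\ell-k}\mapsto s^{k-\ell}=\overline{s^{\ell-k}}$ for $|s|=1$, so the coefficient sum $\sum p_{k,\ell}\eta^{-k-\ell+2}s^{\ell-k}$ is real whenever $\eta$ is real; hence $Q_s$ has real coefficients and, being of the right shape, its relevant root is real. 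The normalization $\eta(1)=1$ is immediate from $K(1,1)=1-\sum p_{k,\ell}=0$, and $\eta(-1)=-1$ follows since at $s=-1$ the equation reads $\eta^2=\sum p_{k,\ell}\eta^{-k-\ell+2}(-1)^{\ell-k}$ and evaluating the random walk at $(x,y)=(-1,-1)$, i.e. using that $\sum p_{k,\ell}(-1)^{k+\ell}$ relates to $\eta=-1$; I would check this substitution directly. The strict inequality $|\eta(s)|<1$ for $s\neq\pm1$ is exactly the strict Rouché inequality above together with \ref{H3:jumps}: equality $|\eta(s)|=1$ would force $x=\eta s$, $y=\eta s^{-1}$ with $|x|=|y|=1$ and $|\sum p_{k,\ell}x^{-k}y^{-\ell}|=1$, hence $x=y=1$, hence $s=1$.

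For the case $p_{1,1}\neq0$, both roots are now nonzero (the constant term of $Q_s$ no longer vanishes), and I would again use \ref{H1:jumps} to get real coefficients and hence two real roots $\eta_1(s),\eta_2(s)$. The relation $\eta_2(s)=-\eta_1(-s)$ I would prove by the substitution $\eta\mapsto-\eta$, $s\mapsto-s$: under this the right-hand side $\sum p_{k,\ell}\eta^{-k-\ell+2}s^{\ell-k}$ picks up the factor $(-1)^{-k-\ell+2}(-1)^{\ell-k}=(-1)^{-2k}=1$, while $\eta^2$ is unchanged, so the equation is invariant; thus $\eta$ solves it at parameter $s$ iff $-\eta$ solves it at parameter $-s$, which gives the stated pairing of the two-element root set. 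Finally $\eta_1(1)=1$ again from $K(1,1)=0$, and $\eta_1(s)\in(0,1)$ for $s\neq1$ should follow from a continuity/connectedness argument: $\eta_1$ is continuous in $s$, cannot hit $0$ (constant term nonzero) nor $\pm1$ (by \ref{H3:jumps}, as above, except at $s=1$), so it stays in one of the intervals $(0,1)$, $(-1,0)$; evaluating near $s=1$ shows it is the one tending to $1$, hence in $(0,1)$ throughout.

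\textbf{Main obstacle.} I expect the delicate point to be the boundary behaviour at $s=\pm1$, where the Rouché inequality degenerates to equality and one must argue that the roots do not escape $\overline{\mathcal{C}^+}$ and land exactly at $1$ (resp.\ $-1$) — this requires a careful local analysis (an implicit-function-theorem or Puiseux expansion of $\eta(s)$ near $s=\pm1$) rather than a soft argument, and it is also where the zero-drift hypothesis \ref{H4:jumps} enters, ensuring that the curve has the right local shape at the contact point $1$. A secondary technical nuisance is book-keeping the degree of the cleared polynomial $Q_s$ in the presence of arbitrarily large negative jumps, and checking that no spurious roots are introduced at $\eta=0$ when multiplying through; but this is routine once the maximal negative range is fixed.
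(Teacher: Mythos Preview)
Your approach is essentially the same as the paper's --- Rouch\'e on $|\eta|=1$ using \ref{H3:jumps}, a perturbation at $s=\pm1$ using \ref{H4:jumps}, the factorization $\eta\mid K(\eta s,\eta s^{-1})$ when $p_{1,1}=0$, and the $(\eta,s)\mapsto(-\eta,-s)$ symmetry for the pairing $\eta_2(s)=-\eta_1(-s)$. Your identification of the main obstacle is also accurate.

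There is, however, one genuine gap. In case \ref{item:kernel_p11_not=0} you write ``real coefficients and hence two real roots''. Real coefficients alone do not force the two roots in $\overline{\mathcal C^+}$ to be real: they could a priori form a complex conjugate pair, both inside the disk. (In case \ref{item:kernel_p11=0} your argument is fine, since a \emph{single} root of a real polynomial must be real.) The paper fills this gap by an elementary sign-change argument: for real $\eta$ and $|s|=1$, using \ref{H1:jumps} one has $K(\eta s,\eta s^{-1})\in\mathbb R$, with value $-p_{1,1}<0$ at $\eta=0$ and value $\geq 0$ at $\eta=\pm1$; hence one root lies in $(0,1)$ and the other in $(-1,0)$. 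This simultaneously proves reality, gives the separation that makes the labelling $\eta_1>0>\eta_2$ well-defined and continuous in $s$, and yields $\eta_1(s)\in(0,1)$ directly --- so your continuity/connectedness argument becomes unnecessary. The same intermediate-value-theorem device (\,$\widetilde K(1,s)\geq 0$, $\widetilde K(-1,s)\leq 0$\,) is what the paper uses in case \ref{item:kernel_p11=0} to locate $\eta(s)\in[-1,1]$.

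A minor slip: at $s=-1$, $\eta=-1$ gives $(x,y)=(\eta s,\eta s^{-1})=(1,1)$, not $(-1,-1)$; so $\eta(-1)=-1$ follows again from $K(1,1)=0$, or equivalently from the symmetry $\widetilde K(-\eta,-s)=-\widetilde K(\eta,s)$ as in the paper.
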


Although the proof of Lemma \ref{lemma:kernel} may be found in the book \cite{CoBo-83} (see in particular \cite[Lem.~2.1]{CoBo-83}), we recall here the details, for the sake of completeness.  
\begin{proof}
	We first present the proof of \ref{item:kernel_p11=0}. When $p_{1,1}=0$, a term $\eta$ may be factorised out of the kernel \eqref{eq:kernel_eq_eta_s}, so one of the roots is identically equal to zero. More precisely, we have $K(\eta s,\eta s^{-1})=\eta \widetilde{K}(\eta,s)$, where
	\begin{equation}
	\label{eq:K-tilde}
   	\widetilde{K}(\eta,s)=\eta -\sum p_{k,\ell}\eta^{-k-\ell+1}s^{-k+\ell}.
	\end{equation}
	By Assumption \ref{H3:jumps}, one knows that if $(\eta s,\eta s^{-1})$ is a root of ${K}$ with $\vert\eta\vert=\vert s\vert =1$, then $\eta=s=1$ or $\eta=s=-1$. Therefore, for all $\vert s\vert =1$ with $s\neq \pm 1$, one has the following strict inequality:
	\begin{equation}
	\label{eq:before_Rouch\'e}
	\left\vert\sum  p_{k,\ell} \eta^{-k-\ell+1} s^{-k+\ell}\right\vert < 1 = \vert \eta \vert.
	\end{equation}
	It readily follows from Rouch\'e's theorem (we use its statement as in \cite[Thm~6.2]{Ev-66}) that, as a function of $\eta$, $\widetilde{K}(\eta,s)$ has a unique root in $\mathcal{C}^+$, denoted by $\eta(s)$. Since $\widetilde{K}(1,s)\geq 0$ and $\widetilde{K}(-1,s)\leq 0$, then $\eta(s)\in[-1,1]$, hence the root is real.

	The previous argument does not work for $s=\pm 1$, because \eqref{eq:before_Rouch\'e} becomes an equality at this point. We now consider $s=1$. Fix $r\in (0,1)$ and consider the series
	\begin{equation*}
    \widetilde{K}_r(\eta)=\eta -r\sum p_{k,\ell}\eta^{-k-\ell+1}.
	\end{equation*}
	Rouch\'e's theorem yields that $\widetilde{K}_r$ has a unique root $\eta_r(1)\in\mathcal{C}^+$. Further, Assumption~\ref{H5:jumps} ensures that $\widetilde{K}_r$ is increasing on $[0,1]$, with $\widetilde{K}_r(0)=-(p_{1,0}+p_{0,1})<0$ and $\widetilde{K}_r(1)=1-r>0$. Moreover, $\eta_r(1)\in(0,1)$ and converges to $1$ as $r\to 1$. By continuity arguments, one has $\eta(1)=1$. At the point $s=-1$, notice that for all $\eta$ and $s$, one has $\widetilde{K}(-\eta,-s) = -\widetilde{K}(\eta,s)$. This implies that $\eta(-1)=-1$.

	We move to the proof of \ref{item:kernel_p11_not=0}. Fix $\vert s\vert=1$ with $s\neq 1$. Since for all $\vert\eta\vert=1$, one has
	\begin{equation*}
   	\left\vert\sum p_{k,\ell} \eta^{-k-\ell+2} s^{-k+\ell} \right\vert < 1 = \vert \eta^2 \vert,
	\end{equation*}
	then by Rouch\'e's theorem, $K(\eta s,\eta s^{-1})$ admits two roots $\eta_1(s)$ and $\eta_2(s)$ in $\mathcal{C}^+$. These quantities are real, as $K(\eta s,\eta s^{-1})$ is non-negative at $1$ and $-1$, but negative at $0$. So one of them (say $\eta_1(s)$) is positive, and the other one ($\eta_2(s)$) is negative.

	Looking now at the point $s=1$, we set for $r\in(0,1)$
	\begin{equation*}
	K_r(\eta) = \eta^2 - r\sum p_{k,\ell}\eta^{-k-\ell+2}.	
	\end{equation*}
	Rouch\'e's theorem implies that $K_r$ has two roots in $\mathcal{C}^+$, say $\eta_{1,r}(1)$ and $\eta_{2,r}(1)$. Notice that $K_r$ is positive at $\pm 1$, negative at $0$, so $\eta_{1,r}(1)\in (0,1)$ and $\eta_{2,r}(1)\in (-1,0)$.

	Since $K_r'$ is concave on $[0,1]$, with $K_r'(0)=-r(p_{1,0}+p_{0,1})\leq 0$ and $K_r'(1)=2(1-r)>0$, there exists $\eta_0\in[0,1)$ such that $K_r'(\eta_0)=0$. Hence, $K_r$ is decreasing on $[0,\eta_0]$, increasing on $[\eta_0,1]$, and $\eta_{1,r}(1)\in (\eta_0,1)$. As $r\to 1$, then $\eta_{1,r}(1)\to 1$, while $\eta_{2,r}(1)$ goes to a point in $(-1,0)$. Hence, $K_1$ has two solutions in $\overline{\mathcal{C}^+}$, which are $\eta_1(1)=1$ and $\eta_2(1)\in (-1,0)$.

	To conclude, we prove that for all $\vert s\vert=1$, $\eta_2(s)=-\eta_1(-s)$. Equation $K(\eta s,\eta s^{-1})=0$ may be rewritten as
	\begin{equation}
	\label{eq:quadratic_eq_kernel}
    \left(1-\sum_{k+\ell\leq 0}p_{k,\ell}\eta^{-k-\ell}s^{-k+\ell}\right)\eta^2-(p_{1,0}s^{-1}+p_{0,1}s)\eta - p_{1,1}=0.
	\end{equation}
	Notice that if $\eta(s)$ is a solution of \eqref{eq:quadratic_eq_kernel}, then $-\eta(-s)$ is the other solution. Hence, our claim follows.
\end{proof}

It is worth mentioning that in the case $p_{1,1}\neq 0$, the domain \eqref{eq:main_domain} does not depend on the choice of $\eta_1(s)$ and $\eta_2(s)$. Hereafter, we shall write $\eta(s)$ for $\eta_1(s)$ in the case $p_{1,1}\not=0$. Based on Lemma \ref{lemma:kernel} and using the notation just introduced, we may now properly define the domain \eqref{eq:main_domain}:
\begin{equation*}
     \mathcal{K}=\{ (\eta(s)s,\eta(s)s^{-1}):\vert s\vert=1 \}=\{ (\eta(s)s,\overline{\eta(s)s}):\vert s\vert=1 \},
\end{equation*}
with the bar denoting the complex conjugation. Introduce the curves
\begin{equation*}
   \mathcal{S}_1=\{\eta(s)s : \vert s\vert=1\} \quad \text{and}\quad \mathcal{S}_2=\{\eta(s)s^{-1}:\vert s\vert=1\},
\end{equation*}
which are obtained by taking the projection of $\mathcal{K}$ along the first and second coordinates, respectively. See Figure \ref{fig:some_curves} for a few examples. Obviously, an equivalent description of $\mathcal{S}_1$ (as a parametrized curve) is
\begin{equation*}
   \mathcal{S}_1=\{(\eta(e^{it})\cos t,\eta(e^{it})\sin t) : t\in[0,2\pi)\},
\end{equation*}
where we recall that $\eta(e^{it})$ is real, and that the interval $[0,2\pi)$ may be replaced by $[0,\pi)$ in the case $p_{1,1}= 0$. For example, in the case of the simple random walk, one will have
\begin{equation}
\label{eq:expression_S1_SRW}
   \mathcal{S}_1 = \{(1-\sin t,(1-\sin t)\tan t): t\in [0,\pi)\},
\end{equation}
see Figures \ref{fig:some_curves} and \ref{fig:def_theta}. We also emphasize that if the support of the $p_{k,\ell}$ is bounded, then the function $\eta(s)$ is an algebraic function of $s$.

\begin{lemma}
	\label{lemma: properties of S1 S2'}
	Assume \ref{H1:jumps}--\ref{H5:jumps}. The following assertions hold:
	\begin{enumerate}[label=\textnormal{(\roman{*})},ref=\textnormal{(\roman{*})}]
	\item\label{it:S1-i}If $p_{1,1}=0$, then $0\in\mathcal{S}_1$ and for all $\vert s\vert=1$, $\eta(-\overline{s})=-\eta(s)$. The maps $s\mapsto\eta(s)s$ and $s\mapsto \eta(s)s^{-1}$ are two-to-one from $\mathcal{C}$ to $\mathcal{S}_1$ and $\mathcal{S}_2$, respectively.
	\item\label{it:S1-ii}If $p_{1,1}\neq 0$, then $0\in\mathcal{S}_1^+$ and for all $\vert s\vert=1$, $\eta(s)>0$ and $\eta(\overline{s})=\eta(s)$. The maps $s\mapsto\eta(s)s$ and $s\mapsto \eta(s)s^{-1}$ are one-to-one from $\mathcal{C}$ to $\mathcal{S}_1$ and $\mathcal{S}_2$, respectively. 
	\item\label{it:S1-iii}The curves $\mathcal{S}_1$ and $\mathcal{S}_2$ are equal, non-self-intersecting and symmetric with respect to the real axis. If $s$ traverses $\mathcal{C}$ counterclockwise, then $\eta(s)s$ traverses $\mathcal{S}_1$ counterclockwise and $\eta(s)s^{-1}$ traverses $\mathcal{S}_2$ clockwise.
	\end{enumerate}
\end{lemma}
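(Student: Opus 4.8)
The plan is to reduce the three assertions to two symmetry relations for the map $s\mapsto\eta(s)$ on the unit circle $\mathcal C$, together with a sign analysis, treating the cases $p_{1,1}=0$ and $p_{1,1}\neq0$ in parallel. \textbf{Step 1: symmetries of $\eta$.} Since the weights are symmetric (\ref{H1:jumps}), the kernel satisfies $K(x,y)=K(y,x)$, so $(x,y)\in\mathcal K$ if and only if $(y,x)\in\mathcal K$; writing a point of $\mathcal K$ as $(\eta(s)s,\eta(s)s^{-1})$ and matching $(y,x)$ against this parametrization forces the parameter to be $\pm\overline s$ with $\eta$-value $\pm\eta(s)$. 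I would remove this ambiguity either from positivity of $\eta=\eta_1$ when $p_{1,1}\neq0$, or, when $p_{1,1}=0$, from the identity $\widetilde K(-\eta,-s)=-\widetilde K(\eta,s)$ read off \eqref{eq:K-tilde} which, combined with uniqueness of the non-zero root, gives $\eta(-s)=-\eta(s)$ for all $|s|=1$. Either way one gets $\eta(\overline s)=\eta(s)$; and when $p_{1,1}=0$ one moreover obtains $\eta(-\overline s)=-\eta(s)$ and in particular $\eta(\pm i)=0$.

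\textbf{Step 2: identification $\mathcal S_1=\mathcal S_2$, mirror symmetry, and the case $p_{1,1}\neq0$.} The change of variable $s\mapsto\overline s$ together with $\eta(\overline s)=\eta(s)$ gives $\mathcal S_2=\{\eta(s)s^{-1}\}=\{\eta(\overline s)\overline s\}=\{\eta(s)s\}=\mathcal S_1$; and since $\eta$ is real on $\mathcal C$, $\overline{\eta(s)s}=\eta(s)s^{-1}\in\mathcal S_1$, so $\mathcal S_1$ is invariant under complex conjugation, i.e.\ symmetric about the real axis. If $p_{1,1}\neq0$, Lemma~\ref{lemma:kernel}\ref{item:kernel_p11_not=0} gives $\eta(e^{it})>0$ for all $t$, so $\mathcal S_1$ is the polar graph $r=\eta(e^{it})$ of a strictly positive continuous $2\pi$-periodic function; such a graph is a simple closed curve, the map $e^{it}\mapsto\eta(e^{it})e^{it}$ is a bijection from $\mathcal C$ onto $\mathcal S_1$ (distinct $t\in[0,2\pi)$ give distinct arguments, hence distinct points), the origin lies in the bounded component $\mathcal S_1^+$, and $\arg(\eta(e^{it})e^{it})=t$ increasing shows the curve is run counterclockwise; by $\mathcal S_1=\mathcal S_2$ and $\arg(\eta(e^{it})e^{-it})=-t$, the map $e^{it}\mapsto\eta(e^{it})e^{-it}$ is a bijection onto $\mathcal S_2$ run clockwise.

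\textbf{Step 3: the case $p_{1,1}=0$.} First I would pin down the zeros of $\eta$: by \ref{H2:jumps} and $p_{1,1}=0$ the polynomial $\widetilde K(\cdot,s)$ has no negative powers of $\eta$ and its $\eta$-free term equals $-p_{1,0}(s+s^{-1})$, so $\eta(s)=0$ if and only if $s+s^{-1}=0$, i.e.\ $s=\pm i$ (here one uses $p_{1,0}=p_{0,1}>0$, for otherwise all increments would satisfy $k+\ell\leq0$, contradicting zero drift together with irreducibility). With $\eta(1)=1$ and continuity this forces $\eta(e^{it})>0$ for $|t|<\pi/2$ and $\eta(e^{it})<0$ for $\pi/2<|t|\leq\pi$ (consistently with $\eta(-\overline s)=-\eta(s)$), and in particular $0=\eta(i)\,i\in\mathcal S_1$. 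Next, $\eta(-s)(-s)=\eta(s)s$ shows that $e^{it}\mapsto\eta(e^{it})e^{it}$ identifies $s$ with $-s$, hence factors through the half-circle $t\in[0,\pi)$; there, the image point has modulus $\eta(e^{it})>0$ and argument $t$ when $t\in[0,\pi/2)$, modulus $-\eta(e^{it})>0$ and argument $t+\pi\in(3\pi/2,2\pi)$ when $t\in(\pi/2,\pi)$, and is the origin only at $t=\pi/2$; as the two argument ranges are disjoint and each argument pins down its point, the map is injective on $[0,\pi)$. Therefore $\mathcal S_1$ is a simple closed curve through the origin and $e^{it}\mapsto\eta(e^{it})e^{it}$ is exactly two-to-one onto $\mathcal S_1$, with the counterclockwise orientation read off from $\arg$ as above; the assertions for $\mathcal S_2$ follow from $\mathcal S_1=\mathcal S_2$ and the substitution $t\mapsto-t$.

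\textbf{Expected main obstacle.} The delicate point will be the case $p_{1,1}=0$, and within it the claim that $\eta$ vanishes \emph{only} at $\pm i$: this is what rigidifies the sign pattern of $\eta$ on $\mathcal C$, hence the geometry of $\mathcal S_1$ (in particular the pinch at the origin), and is precisely what upgrades the evident two-to-oneness ($s\sim-s$) to an exact, non-self-intersecting two-to-one parametrization. The case $p_{1,1}\neq0$ is comparatively immediate once $\eta>0$ is known. A minor but necessary point throughout is the continuity of $\eta$ on all of $\mathcal C$ — including at $s=\pm1$, where the Rouch\'e argument degenerates — which is part of the content of Lemma~\ref{lemma:kernel}.
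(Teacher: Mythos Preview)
Your proposal is correct and, for the symmetry relations $\eta(\overline s)=\eta(s)$ and $\eta(-\overline s)=-\eta(s)$, essentially equivalent to the paper's argument: the paper writes $\widetilde K(\eta,e^{i\lambda})$ (respectively $K(\eta e^{i\lambda},\eta e^{-i\lambda})$) explicitly, observes that the symmetry \ref{H1:jumps} makes the $s$-dependence reduce to cosines, and reads off both symmetries from this; you instead deduce $\eta(\overline s)=\eta(s)$ from $K(x,y)=K(y,x)$ and $\eta(-s)=-\eta(s)$ from $\widetilde K(-\eta,-s)=-\widetilde K(\eta,s)$, which is the same computation in a different wrapper.

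Where you genuinely go beyond the paper is in the geometric part. The paper's proof of \ref{it:S1-iii} is literally the sentence ``Item \ref{it:S1-iii} is a direct consequence of \ref{it:S1-i} and \ref{it:S1-ii}'' --- the injectivity/two-to-one claims, the simplicity of the curve, and the orientation are not argued. Your polar-graph argument in the case $p_{1,1}\neq0$ (distinct arguments force distinct points) and, in the case $p_{1,1}=0$, your determination that $\eta$ vanishes on $\mathcal C$ exactly at $\pm i$ (via the constant term $-p_{1,0}(s+s^{-1})$ of $\widetilde K(\cdot,s)$, with $p_{1,0}>0$ forced by drift zero plus irreducibility) and the resulting sign pattern, are real additions that make the injectivity on a half-circle and the non-self-intersection honest. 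The paper does not isolate the zero set of $\eta$ at all. So: same route for the algebra, but you supply the geometry that the paper only gestures at.
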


\begin{proof}
	Item \ref{it:S1-iii} is a direct consequence of \ref{it:S1-i} and \ref{it:S1-ii}, and we start with the proof of \ref{it:S1-i}. By Assumption \ref{H1:jumps} and our notation \eqref{eq:K-tilde}, we have for $s={e}^{{i}\lambda}$:
	\begin{equation*}
	\widetilde{K}(\eta, {e}^{{i}\lambda}) = \eta-\sum p_{k,\ell}\eta^{-k-\ell+1}\cos((-k+\ell)\lambda).
	\end{equation*}
	It is seen that if $\eta=\eta({e}^{{i}\lambda})$ is a root of the above function, then so is $-\eta(-{e}^{{-i}\lambda})$. By uniqueness of the solution, we must have $\eta(s)=-\eta(-\overline{s})$.

	Moving to the proof of \ref{it:S1-ii}, we first have
	\begin{equation}
	\label{eq:formula_K_ii}
	{K}(\eta{e}^{{i}\lambda},\eta{e}^{{-i}\lambda})= \eta^2-\sum p_{k,\ell}\eta^{-k-\ell+2}\cos((-k+\ell)\lambda),
	\end{equation}
	see \eqref{eq:kernel_eq_eta_s}. It is seen that if $\eta({e}^{{i}\lambda})$ is a root of \eqref{eq:formula_K_ii}, then so is $\eta({e}^{{-i}\lambda})$. Hence, $\eta(s)=\eta(\overline{s})$, being both positive (see Lemma \ref{lemma:kernel} and its proof).
\end{proof}

\subsection{Corner point of the curve $\mathcal{S}_1$ at $1$}
\label{subsec:corner}

Since it will be crucial in the next sections, we need to study the precise shape of the curve at the point $1$. It should be noted that this single point $1$ contains all the information about the growth of harmonic functions. 

Throughout the paper, we write $V[K]$ for the set of zeros of $K$ in $\mathbb C^2$. We also recall here that a point of $V[K]$ is singular if and only if $\frac{\partial K}{\partial x}$ and $\frac{\partial K}{\partial y}$ simultaneously vanish at that point.
\begin{lemma}
	\label{lemma:angle_at_1}
	Assume \ref{H1:jumps}--\ref{H5:jumps}. The curve $\mathcal{S}_1$ admits a corner point at $1$ with interior angle $\theta$ given by \eqref{eq:angle_at_1}, and is smooth elsewhere. Moreover, $\mathcal{K}\setminus\{(1,1)\}$ consists of non-singular points of $V[K]$. 
\end{lemma}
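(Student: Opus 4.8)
The plan is to analyze the curve $\mathcal{S}_1$ locally near each of its points by using the parametrization $s\mapsto \eta(s)s$ on the unit circle $\mathcal{C}$, distinguishing the behavior at $s=1$ (and $s=-1$ when $p_{1,1}=0$) from the behavior at all other points. First I would establish that away from the critical parameter values, $\eta(s)$ depends real-analytically (indeed algebraically, when the support of the $p_{k,\ell}$ is finite) on $s$: this follows from the implicit function theorem applied to $\widetilde{K}(\eta,s)=0$ (resp. $K(\eta s,\eta s^{-1})=0$), since by the proof of Lemma~\ref{lemma:kernel} the relevant partial derivative $\partial_\eta$ does not vanish there — the root is simple, being the unique root in $\mathcal{C}^+$ and staying strictly inside the disk. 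This simultaneously shows that the corresponding points of $\mathcal{K}$ are non-singular points of $V[K]$: if $(x_0,y_0)\in\mathcal{K}\setminus\{(1,1)\}$ were singular, both $\partial_x K$ and $\partial_y K$ would vanish there, forcing $\eta(s)$ to be a multiple root of the kernel equation at the corresponding $s$, contradicting simplicity. Hence $\mathcal{S}_1$ is a smooth curve away from the exceptional parameter(s), and $\mathcal{K}\setminus\{(1,1)\}$ consists of non-singular points.

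The heart of the matter is the local structure at $s=1$ (the point $1\in\mathcal{S}_1$), where the Rouché argument degenerates because \eqref{eq:before_Rouché} becomes an equality, and where two branches of the kernel's zero set come together so that $(1,1)$ is a singular point of $V[K]$. Here I would perform a local expansion: write $s=e^{i\lambda}$ and expand the kernel equation \eqref{eq:formula_K_ii} (or its $p_{1,1}=0$ analogue via $\widetilde{K}$) to second order in $\lambda$ and in $\eta-1$, using the zero-drift hypothesis \ref{H4:jumps} to kill the first-order terms. Because the drift vanishes, the leading behavior is quadratic: one obtains a relation of the form $\sigma_1(\eta-1)^2 + 2\sigma_{1,2}\cdot(\text{something})\cdot(\eta-1) + (\text{quadratic in }\lambda)\approx 0$ with coefficients given precisely by the second moments $\sum k^2 p_{k,\ell}$, $\sum k\ell\, p_{k,\ell}$, $\sum \ell^2 p_{k,\ell}$, i.e. the entries of the covariance matrix $\sigma$ in \eqref{eq:covariance_matrix}. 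Solving this locally, $\eta(e^{i\lambda})-1$ is asymptotically linear in $|\lambda|$ with two different one-sided slopes as $\lambda\to 0^+$ and $\lambda\to 0^-$; translating through $x=\eta(s)s\approx (1+(\eta-1))(1+i\lambda)$, the two half-tangents at $1$ make an angle whose cosine works out to $-\sigma_{1,2}/\sqrt{\sigma_1\sigma_2}$, which is exactly $\theta$ in \eqref{eq:angle_at_1}. I would double-check the orientation and the fact that the \emph{interior} angle (the one on the $\mathcal{S}_1^+$ side containing $0$) is the one computed, using Lemma~\ref{lemma: properties of S1 S2'}\ref{it:S1-iii} on the counterclockwise traversal; in the $p_{1,1}=0$ case the symmetry $\eta(-\overline{s})=-\eta(s)$ shows the point $-1$ is obtained from $1$ by $s\mapsto -s$, so the curve has the analogous corner structure there but, since $[0,\pi)$ suffices as parameter range, $-1$ and $1$ are the two endpoints of one arc rather than genuine extra corners — effectively the same corner viewed twice.

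The main obstacle I anticipate is making the local expansion at $s=1$ rigorous and uniform: one must control the error terms in the Puiseux/Taylor expansion of the implicitly defined root $\eta(s)$ near a point where the defining equation has a double root, so the naive implicit function theorem fails and one needs either the Weierstrass preparation theorem / Newton polygon analysis of $K(x,y)$ near $(1,1)$, or a careful two-term asymptotic analysis directly on \eqref{eq:formula_K_ii}. A clean way is to note that $(1,1)$ is an ordinary double point of the plane curve $K=0$ (the quadratic form given by the Hessian of $K$ at $(1,1)$ is exactly the covariance form, which is positive definite by \ref{H3:jumps}, hence nondegenerate), so $V[K]$ has two smooth branches through $(1,1)$ with distinct tangent lines; intersecting with the real locus $\{|x|=|y|\}$ and keeping the branch inside $\overline{\mathcal{C}^+}$ then gives the two half-tangents of $\mathcal{S}_1$ at $1$, and a short computation identifies the angle between them with $\theta$. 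A secondary, more bookkeeping-level difficulty is treating the $p_{1,1}=0$ and $p_{1,1}\neq 0$ cases without duplicating the entire argument; I would handle this by working throughout with the form \eqref{eq:formula_K_ii} valid in both cases (absorbing the factor $\eta$ in the $p_{1,1}=0$ case) and noting that the relevant Hessian at $(1,1)$ is the covariance matrix regardless.
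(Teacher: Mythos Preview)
Your plan matches the paper's proof closely: smoothness of $\mathcal S_1\setminus\{1\}$ and non-singularity of $\mathcal K\setminus\{(1,1)\}$ via the simple-root/implicit-function argument, and the corner angle at $1$ via a second-order local expansion exploiting \ref{H4:jumps}. The paper carries out your approach (a) almost verbatim, differentiating $\widetilde K(\eta(s),s)=0$ twice to obtain $\bigl(\sum(k+\ell)^2 p_{k,\ell}\bigr)\eta'(1)^2+\sum(k-\ell)^2 p_{k,\ell}=0$ and reading off the half-tangent direction; your alternative (b) through the Hessian of $K$ at $(1,1)$ (which is $-\sigma$, an ordinary double point with complex-conjugate branches) is a clean equivalent formulation that leads to the same angle after intersecting with $\{y=\overline x\}$.

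One small point to watch when you write this up: in the case $p_{1,1}=0$, the point $(0,0)\in\mathcal K$ corresponds to $\eta(s)=0$ (at $s=\pm i$), and there $\eta=0$ \emph{is} a double root of $K(\eta s,\eta s^{-1})=\eta\,\widetilde K(\eta,s)$, so your ``singular $\Rightarrow$ multiple root $\Rightarrow$ contradiction'' step does not apply as stated. The paper handles this by the one-line direct check $\partial_x K(0,0)=-p_{0,1}\neq 0$; you should do the same. (The underlying reason is that the change of variables $(x,y)\mapsto(\eta,s)$ degenerates at $\eta=0$, so the vanishing of $\partial_\eta K$ there carries no information about $\partial_x K,\partial_y K$.) A second small omission: to conclude that $\mathcal S_1\setminus\{1\}$ is a smooth \emph{curve}, you also need the parametrization $\lambda\mapsto\eta(e^{i\lambda})e^{i\lambda}$ to be regular; the paper checks this by observing that the real part $(\eta(e^{i\lambda}))'$ and the imaginary part $\eta(e^{i\lambda})$ of its derivative cannot vanish simultaneously.
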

In other words, the value of the angle at the corner point is given by the arccosine of the opposite of the correlation coefficient.  See Figure~\ref{fig:def_theta} for an illustration of Lemma \ref{lemma:angle_at_1}.

\begin{figure}
\hspace{-10mm}
\begin{tikzpicture}[scale=2.5]
\draw[gray,very thin] (-1.1,-1.1) grid (1.1,1.1)
	 [step=0.25cm] (-1,-1) grid (1,1);
\draw[->] (-1.1,0) -- (1.1,0)  node[right] {$1$};
\draw[->] (0,-1.1) -- (0,1.1)  node[above] {$1$};

  \textcolor{red}{\qbezier(59,-10)(47,0)(59,10)}

\draw[thick,variable=\t,domain=0:180,samples=50,blue]
  plot ({1-sin(\t)},{tan(\t)-sin(\t)*tan(\t)});
  

\put(45,5){$\theta$}  

\put(30,25){$\mathcal S_1$} 
  
\end{tikzpicture}
\begin{tikzpicture}[scale=2.5]
\draw[gray,very thin] (-1.1,-1.1) grid (1.1,1.1)
	 [step=0.25cm] (-1,-1) grid (1,1);
\draw[->] (-1.1,0) -- (1.1,0)  node[right] {$1$};
\draw[->] (0,-1.1) -- (0,1.1)  node[above] {$1$};

  \textcolor{red}{\qbezier(59,-10)(47,0)(59,10)}

  \draw[thick,variable=\t,domain=0:180,samples=50,blue]
  plot ({(-cos(\t)+sqrt(12-3*cos(\t)*cos(\t))-sqrt((-cos(\t)+sqrt(12-3*cos(\t)*cos(\t)))*(-cos(\t)+sqrt(12-3*cos(\t)*cos(\t)))-4))*cos(\t)/2},{(-cos(\t)+sqrt(12-3*cos(\t)*cos(\t))-sqrt((-cos(\t)+sqrt(12-3*cos(\t)*cos(\t)))*(-cos(\t)+sqrt(12-3*cos(\t)*cos(\t)))-4))*sin(\t)/2});
  \draw[thick,variable=\t,domain=0:180,samples=50,blue]
  plot ({(-cos(\t)+sqrt(12-3*cos(\t)*cos(\t))-sqrt((-cos(\t)+sqrt(12-3*cos(\t)*cos(\t)))*(-cos(\t)+sqrt(12-3*cos(\t)*cos(\t)))-4))*cos(\t)/2},{-(-cos(\t)+sqrt(12-3*cos(\t)*cos(\t))-sqrt((-cos(\t)+sqrt(12-3*cos(\t)*cos(\t)))*(-cos(\t)+sqrt(12-3*cos(\t)*cos(\t)))-4))*sin(\t)/2});
  

\put(45,5){$\theta$}  

\put(30,25){$\mathcal S_1$} 
  
\end{tikzpicture}
\caption{The curve $\mathcal S_1$ for the simple walk (left) and for the king walk (right), and the definition of the angle $\theta$. For the two models above, $\theta=\frac{\pi}{2}$.}
\label{fig:def_theta}
\end{figure}
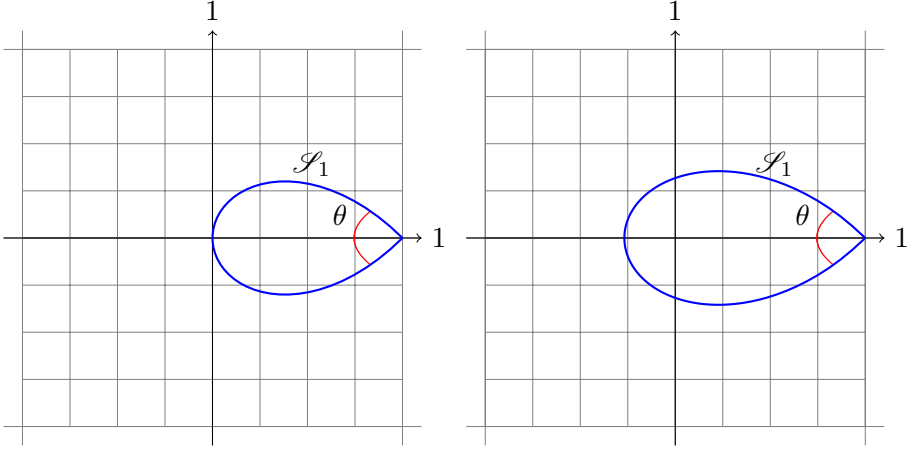

\begin{proof}
	We start with the case $p_{1,1}=0$ and first prove that $\mathcal{K}\setminus\{(1,1)\}$ consists of smooth points of $V[K]$. Suppose that $\widetilde{K}(\eta_0,s_0)=0$, with $\vert\eta_0\vert<1$, $\vert s_0\vert=1$ and $s_0\not=\pm 1$. Then, by Weierstrass preparation theorem for analytic functions in several variables (whose statement is recalled in Appendix~\ref{sec:app_W}, see also \cite{Gu-65}), there exist $r\geq 1$, a neighbourhood $V$ around $(\eta_0,s_0)$ (whose projection along the second coordinate is denoted by $P_s(V)$), $r$ analytic functions $g_0,\ldots,g_{r-1}$ on $P_s(V)$ vanishing only at $s_0$ and a non-vanishing analytic function $h$ on $V$ such that
	\begin{equation*}
    \widetilde{K}(\eta,s)=h(\eta,s)\bigl((\eta-\eta_0)^r+g_{r-1}(s)(\eta-\eta_0)^{r-1}+\cdots+g_{0}(s)\bigr).
	\end{equation*}
	Hence, for all $s$ close to but different from $s_0$, there are $r$ distinct solutions to the equation $\widetilde{K}(\eta,s)=0$. Since for $s\in\mathcal{C}$ in a neighbourhood of $s_0$, there is a unique solution to the equation $\widetilde{K}(\eta,s)=0$, we must have $r=1$. In particular,
	\begin{equation*}
    \frac{\partial\widetilde{K}}{\partial \eta}(\eta_0,s_0)=h(\eta_0,s_0)\not=0,
	\end{equation*}
	and for $\eta_0\neq 0$,
	\begin{equation*}
    \frac{\partial K}{\partial \eta}(\eta_0s_0,\eta_0s_0^{-1})=\eta_0\frac{\partial\widetilde{K}}{\partial \eta}(\eta_0,s_0) \neq 0.
	\end{equation*}
	Since 
	\begin{equation*}
    \frac{\partial K}{\partial \eta}=\frac{\partial x}{\partial \eta}\frac{\partial K}{\partial x}+\frac{\partial y}{\partial \eta}\frac{\partial K}{\partial y} = s\frac{\partial K}{\partial x}+s^{-1}\frac{\partial K}{\partial y},
	\end{equation*} 
	we must have either $\frac{\partial K}{\partial x}$ or $\frac{\partial K}{\partial y}$ non-zero on $\mathcal{K}\setminus \{(1,1)\}$. For $\eta_0=0$, it is easily seen that $\frac{\partial K}{\partial x}(0,0)\neq 0$. The claim then follows.

	We move to the proof of the smoothness of the curve $\mathcal{S}_1\setminus\{1\}$. Differentiating the identity $\widetilde{K}(\eta(s),s)=0$, see \eqref{eq:K-tilde}, we obtain
	\begin{equation}
	\label{eq:derivative_kernel_1}
    \eta'(s)\frac{\partial\widetilde{K}}{\partial\eta}(\eta(s),s) - \frac{\partial\widetilde{K}}{\partial s}(\eta(s),s) = 0.
	\end{equation}
	It is seen that for $\vert s\vert =1$ and $s\neq \pm 1$, $\eta'(s)$ exists and is finite. Now we set $s={e}^{{i}\lambda}$, with $\lambda\in (0,\pi)$. Since both $\lambda$ and $\eta({e}^{{i}\lambda})$ are real, then so is $(\eta({e}^{{i}\lambda}))'$. Moreover, $\eta({e}^{{i}\lambda})$ and $(\eta({e}^{{i}\lambda}))'$ are not simultaneously zero, which leads to
	\begin{equation*}
    (\eta({e}^{{i}\lambda}){e}^{{i}\lambda})'= {e}^{{i}\lambda}\bigl( (\eta({e}^{{i}\lambda}))' + {i}\eta({e}^{{i}\lambda}) \bigr)\neq 0.
	\end{equation*}
	In other words, $\mathcal{S}_1\setminus\{1\}$ is smooth.

	We now deal with the point $(1,1)$. We differentiate again \eqref{eq:derivative_kernel_1} and evaluate the new equation at $s=1$, which leads to
	\begin{equation}
	\label{eq:derivative_kernel_3}
	\left(\sum p_{k,\ell}(k+\ell)^2\right)\eta'(1)^2 + \sum p_{k,\ell}(k-\ell)^2 =0.
	\end{equation}
	Since the equation \eqref{eq:derivative_kernel_3} has two distinct solutions, then $\eta(s)$ is semi-differentiable at $s=1$. Further, the solutions of \eqref{eq:derivative_kernel_3} represent the left and right derivatives of $\eta$ at $1$. Let $\partial_+\eta(1)$ denote the right derivative of $\eta$ at $1$, i.e.,
	\begin{equation*}
	\partial_+\eta(1) = \lim_{\lambda\to 0^+} \frac{\eta({e}^{{i}\lambda})-\eta(1)}{{e}^{{i}\lambda}-1}.
	\end{equation*}
	From Lemma \ref{lemma: properties of S1 S2'}, we know that $\eta({e}^{{i}\lambda})\to 1^-$ as $\lambda\to 0^+$. Hence, by \eqref{eq:derivative_kernel_3}, one has
	\begin{equation*}
	\partial_+\eta(1) = {i}\sqrt{\frac{\sum p_{k,\ell}(k-\ell)^2}{\sum p_{k,\ell}(k+\ell)^2}}.
	\end{equation*}
	We then have
	\begin{equation*}
	\bigl(\partial_+\eta(s)s\bigr)\vert_{s=1}=\partial_+\eta(1) + \eta(1) = 1+{i}\sqrt{\frac{\sum p_{k,\ell}(k-\ell)^2}{\sum p_{k,\ell}(k+\ell)^2}}.
	\end{equation*}
	This allows us to derive the interior angle $\theta$ of $\mathcal{S}_1$ at $1$:
	\begin{equation*}
	\cos \theta = 2\sin^2\bigl((\arg \partial_+\eta(s)s)\vert_{s=1}\bigr)-1=\frac{-\sum k\ell p_{k,\ell}}{\sqrt{\sum k^2p_{k,\ell}}\sqrt{\sum \ell^2p_{k,\ell}}}.
	\end{equation*}    
	This completes the proof in the case $p_{1,1}=0$.

	In the case $p_{1,1}\neq 0$, one may apply the same arguments directly on $K$ (instead of $\widetilde{K}$) and derive the smoothness of the curves. Differentiating the identity $K(\eta(s)s,\eta(s)s^{-1})=0$, we obtain	
	\begin{multline*}
		\eta'(s) \left(2\eta(s)- \sum p_{k,\ell}(-k-\ell+2)\eta(s)^{-k-\ell+1}s^{-k+\ell}\right) \\
		- \left(\sum p_{k,\ell}(-k+\ell)\eta(s)^{-k-\ell+2}s^{-k+\ell-1}\right)=0.
	\end{multline*}
	Equation \eqref{eq:derivative_kernel_3} and the rest of the proof follow in a similar way.
\end{proof}

\subsection{Non-existence of solutions to the kernel equation in $\mathcal{S}_1^+\times\mathcal{S}_1^+$}

In this part, we show that $K(x,y)$ does not have any zero in the domain $\mathcal{S}_1^+\times\mathcal{S}_1^+$. This will allow us to deduce some regularity properties of $H(x,y)$ in the same domain, which will be used importantly in the next sections.

\begin{lemma}
	\label{lemma:no_solution}
	Assume \ref{H1:jumps}--\ref{H5:jumps}.
	If $(x,y)\in V[K]$ are such that $\vert y\vert<\vert x\vert <1$ (resp.\ $\vert x\vert<\vert y\vert <1$), then $x\notin \mathcal{S}_1^+$ (resp.\ $y\notin \mathcal{S}_1^+$). As a consequence, $K(x,y)$ does not have any root in $\mathcal{S}_1^+\times\mathcal{S}_1^+$.
\end{lemma}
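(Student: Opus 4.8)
The plan is to reduce the second (``as a consequence'') assertion to the first one, and then to prove the first assertion by the substitution $x=\eta s,\ y=\eta s^{-1}$ already used throughout Section~\ref{subsec:preliminary}. For the reduction: suppose $(x,y)\in V[K]$ with $x,y\in\mathcal S_1^+$. Since $\mathcal S_1^+$ is bounded and $0\in\overline{\mathcal S_1^+}$ with $\mathcal S_1$ passing through $1$, points of $\mathcal S_1^+$ have modulus $\le 1$; in fact I would first check that $\mathcal S_1^+\subset\mathcal C^+$ (every point of $\mathcal S_1$ has modulus $\le 1$ by the definition \eqref{eq:main_domain} of $\mathcal K$, since $\eta(s)\in[-1,1]$, and $1$ is a corner point, so the open bounded component $\mathcal S_1^+$ lies in the open disk). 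Hence $\vert x\vert<1$ and $\vert y\vert<1$. If $\vert x\vert\ne\vert y\vert$, say $\vert y\vert<\vert x\vert<1$, the first assertion gives $x\notin\mathcal S_1^+$, a contradiction; symmetrically for $\vert x\vert<\vert y\vert$. There remains the case $\vert x\vert=\vert y\vert<1$: then $(x,y)\in\mathcal K$ by definition, so $x\in\mathcal S_1$ and $y\in\mathcal S_1$, hence $x,y\notin\mathcal S_1^+$ since $\mathcal S_1^+$ is the \emph{open} bounded domain. This settles the consequence in all cases.

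For the first assertion, fix $(x,y)\in V[K]$ with $\vert y\vert<\vert x\vert<1$, and suppose for contradiction $x\in\mathcal S_1^+$. The strategy is to use a continuity/winding-number argument to force an equal-modulus kernel zero inside $\mathcal S_1^+$, contradicting the previous paragraph. Concretely, write $x=\rho e^{i\phi}$ and consider, for $t\in[0,1]$, the point $x_t$ on the ray from $0$ through $x$ (or a suitable homotopy of $x$ inside $\mathcal S_1^+$) together with the two kernel roots $y$ of $K(x_t,y)=0$; by Lemma~\ref{lemma:kernel} applied after the $x=\eta s$, $y=\eta s^{-1}$ parametrization, for each $\vert s\vert=1$ there is a distinguished root $\eta(s)$ with $\vert\eta(s)s\vert=\vert\eta(s)s^{-1}\vert$, i.e. a genuine point of $\mathcal S_1$. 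The key point is that the ``small root'' branch $y(x)$ of $K(x,y)=0$ (the one with $\vert y\vert<\vert x\vert$) is the one which, when $x$ ranges over the unit circle $\mathcal C$, produces the curve $\mathcal S_2=\mathcal S_1$; and Rouché's theorem (exactly as in the proof of Lemma~\ref{lemma:kernel}, via $\vert\sum p_{k,\ell}x^{-k+1}y^{-\ell+1}\vert < \vert xy\vert$ when $\vert x\vert,\vert y\vert$ are controlled) shows that inside the bidisk this small-root branch is single-valued and analytic, and maps $\overline{\mathcal C^+}$ into $\overline{\mathcal S_2^+}=\overline{\mathcal S_1^+}$. Now apply the argument/winding principle to the analytic function $x\mapsto $ (the defining equation of $\mathcal S_1$ evaluated along this branch): if there were a point $x\in\mathcal S_1^+$ admitting a kernel partner $y$ with $\vert y\vert<\vert x\vert$, then by comparing with the boundary behaviour on $\mathcal S_1$ one gets a zero of $K$ with $\vert x'\vert=\vert y'\vert$ and $x'\in\mathcal S_1^+$, which is impossible.

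The main obstacle, and the step I would spend the most care on, is making the winding-number/homotopy argument rigorous: one must track the small-root branch $y(x)$ as $x$ moves from $\mathcal S_1$ into $\mathcal S_1^+$ and show that the modulus comparison $\vert y(x)\vert$ versus $\vert x\vert$ cannot ``cross'' without producing an equal-modulus zero of $K$ inside $\mathcal S_1^+$. Equivalently, one shows that the set $\{x\in\overline{\mathcal C^+}\setminus\{0\}:\exists\,y,\ K(x,y)=0,\ \vert y\vert\le\vert x\vert\}$ has boundary (relative to the parameter region) contained in $\mathcal S_1\cup\mathcal C$, using that the only equal-modulus zeros are on $\mathcal S_1$ (definition of $\mathcal K$) and the only unit-modulus zeros are $(1,1)$ (Assumption~\ref{H3:jumps}); since $\mathcal S_1$ separates $\mathcal S_1^+$ from $\mathcal S_1^-$ and $1\in\mathcal S_1$, a connectedness argument then forces $x\notin\mathcal S_1^+$. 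Once this topological input is in place, the analytic ingredients (Rouché, Lemma~\ref{lemma:kernel}, Lemma~\ref{lemma: properties of S1 S2'}, Lemma~\ref{lemma:angle_at_1}) are all already available and the remaining verifications are routine.
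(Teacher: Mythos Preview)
Your reduction of the ``as a consequence'' part to the first assertion is clean and correct; the paper does not spell this out separately, but your treatment of the equal-modulus case via the definition of $\mathcal K$ is exactly right.

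For the main assertion, your instinct (a connectedness/path-continuation argument tracking a root $y$ of $K(x,\cdot)$ as $x$ moves inside $\mathcal S_1^+$) is precisely the paper's strategy. However, your execution has two genuine gaps. First, the Rouché claim that there is a globally single-valued analytic ``small-root branch'' $y(x)$ on $\overline{\mathcal C^+}$ is not justified: under \ref{H2:jumps} the polynomial $K(x,\cdot)$ can have large $y$-degree, and there is no reason a priori for exactly one root to satisfy $\vert y\vert<\vert x\vert$. The paper avoids this by lifting a \emph{single path} $\gamma:[0,1]\to\mathcal S_1^+$ from $x$ to $0$ (chosen to avoid the finitely many branch points where $\frac{\partial K}{\partial y}=0$), rather than seeking a global branch. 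Along such a path the continuation of the initial pair $(x,y)$ is well defined, and since $\gamma$ avoids $\mathcal S_1$ the lifted path never meets $\mathcal K$, so the inequality $\vert P_y\vert<\vert P_x\vert$ persists.

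Second, and more importantly, you miss the case distinction on $p_{1,1}$ and the key local input needed when $p_{1,1}=0$. If $p_{1,1}\neq 0$ the path-lifting ends at $(0,y')$ with $y'\neq 0$ (since $K(0,0)=-p_{1,1}\neq 0$), and $\vert y'\vert<\vert 0\vert$ is the contradiction. But if $p_{1,1}=0$ then $(0,0)\in V[K]$ and $0\in\mathcal S_1$, so this endpoint argument fails. The paper handles this by applying the Weierstrass preparation theorem at $(0,0)$ to write $K(x,y)=h(x,y)(y+f(x))$ with $f(x)=x+cx^2+o(x^2)$ and $c>0$; hence for small real $x>0$ the unique nearby root has $\vert y\vert=\vert f(x)\vert>\vert x\vert$. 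Choosing $\gamma$ to be real for $t$ near $1$ then forces $\vert P_y\circ\gamma'\vert>\vert P_x\circ\gamma'\vert$ near the endpoint while the opposite holds near the start, so $\gamma'$ must cross $\mathcal K$ and $\gamma$ must hit $\mathcal S_1$, contradicting the choice of $\gamma$. Your proposal does not supply a reference point in $\mathcal S_1^+$ without a small-$y$ partner in the $p_{1,1}=0$ case, and this is exactly where the Weierstrass input is needed.
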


\begin{proof}
	We first consider the case $p_{1,1}\not=0$ and set
	\begin{equation*}
	   \mathcal{V}_x=\{(x,y)\in V[K]: \vert y\vert<\vert x\vert<1\}.
	\end{equation*}
	Reasoning by contradiction, let us suppose that there exists $(x,y)\in\mathcal{V}_x$ with $x\in \mathcal S_1^+$. Let $\gamma$ be a path from $x$ to $0$ which avoids $\mathcal S_1$ and any point of $V[K]$ such that $\frac{\partial K}{\partial x}$ vanishes (there are only finitely many such points). Then, there exists a path $\gamma':[0,1]\rightarrow V[K]$ such that $\gamma'(0)=(x,y)$ and $P_{x}\circ \gamma'=\gamma$, with $P_x$ denoting the projection along the first variable. Hence, since $p_{1,1}\not=0$, $\gamma'(1)=(0,y')$ for some $y'\in\mathbb{C}\setminus \{0\}$. Since $P_x\circ\gamma'=\gamma$ never meets $\mathcal S_1$, $\gamma'$ never meets $\mathcal{K}$ and thus $\gamma'([0,1])\subset \mathcal{V}_x$. We should thus have $\vert y'\vert<0$, which is a contradiction. 

	We move to the case $p_{1,1}=0$, for which $(0,0)\in \mathcal{K}$. By Weierstrass preparation theorem (recalled here in Appendix~\ref{sec:app_W}) and coefficient identifications, we have 
	\begin{equation*}
   	K(x,y)=h(x,y)\bigl(y+f(x)\bigr),
	\end{equation*}
	for $(x,y)$ in a neighbourhood $V$ of $(0,0)$, with 
	\begin{equation*}
   	f(x)=x+\frac{1+2p_{2,0}-p_{0,1}}{p_{1,0}}x^2+o(x^2).
	\end{equation*}
	Since $\frac{1+2p_{2,0}-p_{0,1}}{p_{1,0}}>0$, for $x$ small enough with $x>0$, there is a unique $y\in\mathbb{C}$ such that $(x,y)\in V[K]\cap V$, and moreover for $x$ small enough
	\begin{equation*}
   \vert y\vert=\vert f(x)\vert>\vert x\vert.
	\end{equation*}
	Hence, suppose that $(x,y)\in\mathcal{V}_x$ with $x\in \mathcal S_1^+$, and let $\gamma$ be a path from $x$ to $0$ in $\mathcal S_1^+$ avoiding any singular point of $V[K]$ and such that $\gamma(t)$ is real for $t$ close to $1$. Then, there exists a path $\gamma'(x)$ such that $P_{x}\circ \gamma'=\gamma$. By the previous reasoning, for $t$ close to $1$ we have $\vert P_y\circ\gamma'(t)\vert>\vert P_x\circ\gamma'(t)\vert$. Since $\gamma'$ is continuous, and since for $t$ close to $0$ we have $\vert P_y\circ\gamma'(t)\vert<\vert P_x\circ\gamma'(t)\vert$, there exists $t\in[0,1)$ such that $\gamma'(t)\in\mathcal{K}$. This contradicts the fact that $P_x\circ\gamma'$ avoids $\mathcal S_1$.
\end{proof}

\section{Boundary value problems for the generating functions}
\label{sec:BVP}

The main objective of this section is to prove Proposition \ref{prop:BVP}, which states a BVP for the sectional generating functions $H(x,0)$ and $H(0,y)$ defined in \eqref{eq:generating_functions_harmonic_functions_uni}. The polynomial solutions to this BVP will be analyzed in Corollary \ref{cor:solutions-poly_BVP}. We first need to introduce conformal mappings for the bounded domain delimitated by $\mathcal S_1$.

We begin with introducing some notation. Let $\mathcal A$ denote a non-intersecting curve separating the complex plane into two domains, $\mathcal A^+$ and $\mathcal A^-$. Let also $f$ be meromorphic in $\mathbb C\setminus\mathcal A$. Then for $t$ on the curve $\mathcal A$, $f^+(t)$ will denote $\lim_{x\to t, x\in\mathcal A^+} f(x)$, provided it exists. The notation $f^-(t)$ is defined similarly.

\subsection{Conformal mappings}

\begin{lemma}\label{lem:pi_{1}}
    For any $a\in\mathcal{S}_1^+\cap\mathbb{R}$, there exists a unique conformal mapping $\pi_{1}$ from $\mathcal{S}_1^+$ onto $\mathcal{C}^+$ such that $\pi_{1}(a)=0$ and $\pi_{1}'(a)>0$. Moreover, $\pi_{1}^+(1)=1$ and
    \begin{enumerate}[label=\textnormal{(\roman{*})},ref=\textnormal{(\roman{*})}]
        \item\label{lem:pi_{1}-1} In the case $p_{1,1}=0$, $\pi_{1}^+(0)=-1$;
        \item\label{lem:pi_{1}-2} In the case $p_{1,1}\neq 0$, $\pi_{1}(0)\in(-1,1)$.
    \end{enumerate}
\end{lemma}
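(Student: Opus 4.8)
The plan is to invoke the Riemann mapping theorem to get existence and uniqueness, then pin down the boundary behavior using the geometry of $\mathcal{S}_1$ established in Lemmas \ref{lemma:kernel}--\ref{lemma:angle_at_1}. First I would record that $\mathcal{S}_1^+$ is a bounded simply connected domain (its boundary $\mathcal{S}_1$ is a closed non-self-intersecting curve by Lemma \ref{lemma: properties of S1 S2'}\ref{it:S1-iii}) whose complement in $\widehat{\mathbb{C}}$ has more than one point, so the Riemann mapping theorem provides a conformal bijection $\mathcal{S}_1^+\to\mathcal{C}^+$; normalizing by $a\mapsto 0$ and $\pi_1'(a)>0$ (legitimate since $a\in\mathcal{S}_1^+$) gives existence and uniqueness of $\pi_1$.

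Next I would address the boundary extension and the values $\pi_1^+(1)$, $\pi_1^+(0)$ (or $\pi_1(0)$). Since $\mathcal{S}_1$ is a Jordan curve, Carath\'eodory's theorem guarantees that $\pi_1$ extends to a homeomorphism $\overline{\mathcal{S}_1^+}\to\overline{\mathcal{C}^+}$, so the boundary limits $\pi_1^+$ exist everywhere on $\mathcal{S}_1$. For the value at $1$: $\mathcal{S}_1$ is symmetric with respect to the real axis (Lemma \ref{lemma: properties of S1 S2'}\ref{it:S1-iii}), and $a$ is real, so by uniqueness $\overline{\pi_1(\overline{z})}=\pi_1(z)$, i.e.\ $\pi_1$ maps the real slice of $\mathcal{S}_1^+$ to the real slice $(-1,1)$ of $\mathcal{C}^+$, preserving orientation because $\pi_1'(a)>0$. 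The curve $\mathcal{S}_1$ meets the real axis in exactly two points, namely $1$ and the leftmost real point (which is $0$ when $p_{1,1}=0$ by Lemma \ref{lemma: properties of S1 S2'}\ref{it:S1-i}, and a negative point with $0$ interior when $p_{1,1}\neq 0$ by \ref{it:S1-ii}); these are the two endpoints of the real diameter-like segment $\mathcal{S}_1^+\cap\mathbb{R}$, so they must map to $\{-1,1\}$. Since $1$ lies to the right of $a$ and $\pi_1$ is increasing on the real segment, $\pi_1^+(1)=1$, and consequently the other real boundary point maps to $-1$: in case $p_{1,1}=0$ this gives $\pi_1^+(0)=-1$, while in case $p_{1,1}\neq0$ the point $0$ is interior, lies on the real segment to the right of the left endpoint (which maps to $-1$) and to the left of $1$, hence $\pi_1(0)\in(-1,1)$.

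The main obstacle I anticipate is being careful that the two real intersection points of $\mathcal{S}_1$ with $\mathbb{R}$ are precisely $1$ and one other point, and that the real segment of $\mathcal{S}_1^+$ is a single interval with these as endpoints. This follows from the parametrization $\mathcal{S}_1=\{\eta(s)s:|s|=1\}$ together with Lemma \ref{lemma:kernel}: a point $\eta(s)s$ is real exactly when $s=\pm1$ (using $\eta(s)>0$ for $p_{1,1}\neq0$, resp.\ the two-to-one structure with $\eta(-\overline s)=-\eta(s)$ for $p_{1,1}=0$), giving real boundary values $\eta(1)=1$ and, in the $p_{1,1}=0$ case, $\eta(-1)\cdot(-1)=0$; connectedness of $\mathcal{S}_1^+\cap\mathbb{R}$ is automatic since $\mathcal{S}_1^+$ is the bounded Jordan domain. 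One should also note that $a$ can indeed be chosen anywhere in $\mathcal{S}_1^+\cap\mathbb{R}$ precisely because this set is a nonempty open interval (it contains $0$ in both cases, being $\mathcal{S}_1^+\cap\mathbb{R}$ with $0\in\mathcal{S}_1$ or $0\in\mathcal{S}_1^+$). Once these geometric facts are in place, the proof is essentially a bookkeeping of the symmetry and orientation normalizations.
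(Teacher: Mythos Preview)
Your approach is essentially the same as the paper's: Riemann mapping theorem for existence and uniqueness, then the symmetry $\pi_1(\overline z)=\overline{\pi_1(z)}$ (via uniqueness) to deduce that the real segment of $\mathcal{S}_1^+$ is mapped monotonically onto $(-1,1)$, which pins down $\pi_1^+(1)=1$ and the value at the other real boundary point.

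There is one genuine slip in your final paragraph. In the case $p_{1,1}=0$ you claim that $\eta(s)s$ is real exactly when $s=\pm 1$, and that the second real boundary point is $\eta(-1)\cdot(-1)=0$. But Lemma~\ref{lemma:kernel}\ref{item:kernel_p11=0} gives $\eta(-1)=-1$, so $\eta(-1)\cdot(-1)=1$, the \emph{same} point as $\eta(1)\cdot 1$. The point $0\in\mathcal{S}_1$ arises instead from $\eta(s)=0$, which by the symmetry $p_{1,0}=p_{0,1}$ happens at $s=\pm i$ (check $\widetilde K(0,s)=-p_{1,0}(s+s^{-1})$). So the two real points of $\mathcal{S}_1$ are $1$ (from $s=\pm 1$) and $0$ (from $s=\pm i$), consistent with the two-to-one structure of $s\mapsto\eta(s)s$. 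This does not affect the main line of your argument, which only needs that $\mathcal{S}_1\cap\mathbb{R}=\{1,a_0\}$ with $a_0<1$, but the justification you gave for it is incorrect and should be replaced.
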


\begin{proof}
    The existence and uniqueness of $\pi_{1}$ is an immediate consequence of the classical Riemann mapping theorem.
    
    We now prove that $\pi_{1}(x) = \overline{\pi_{1}(\overline{x})}$. Let $\chi(x)$ denote $\overline{\pi_{1}(\overline{x})}$ and $a_0$ denote the other intersection (not $1$) of $\mathcal{S}_1$ and $\mathbb{R}$. Since $\pi_{1}(x)$ maps one-to-one from $\mathcal{S}_1^+$ onto $\mathcal{C}^+$, which are both symmetric w.r.t.\ the real axis, then so is $\chi(x)$. Moreover, since $\chi(a)=\pi_{1}(a)$ and $\chi'(a)=\overline{\pi_{1}'(\overline{a})}>0$ (since $a$ is real and $\pi_{1}'(a)>0$), then $\chi(x)$ maps conformally $\mathcal{S}_1^+$ onto $\mathcal{C}^+$ and $\chi = \pi_{1}$. This implies that $\pi(x)\in (-1,1)$ for all $x\in (a_0,1)$. Since $\pi_{1}$ is univalent and $\pi_{1}'(a)>0$, then $\pi_{1}$ maps one-to-one from $(a_0,1)$ onto $(-1,1)$ in the same direction. Hence, $\pi_{1}^+(1)=1$ and $\pi_{1}^+(a_0)=-1$. Since $a_0=0$ in the case $p_{1,1}=0$ and $0\in (a_0,1)$ in the case $p_{1,1}\neq 0$, then \ref{lem:pi_{1}-1} and \ref{lem:pi_{1}-2} follow.
\end{proof}

With $\pi_{1}$ as in Lemma \ref{lem:pi_{1}}, we introduce
\begin{equation}
\label{eq:pi_{2}}
	\pi_{2}(z)=\frac{1}{\pi_{1}(z)},
\end{equation}
which maps $\mathcal{S}_1^+$ conformally onto $\mathcal{C}^-$, the exterior of the unit disk.
Denote by $\mathcal{H}^+$ (resp.\ $\mathcal{H}^-$) the interior of the right (resp.\ left) half-plane and define the following mapping
\begin{equation}
\label{eq:expression_phi}
    \phi(z)=-\frac{z+1}{z-1}.
\end{equation}
It is well known that $\phi$ maps conformally $\mathcal{C}^+$ (resp.\ $\mathcal{C}^-$) onto $\mathcal{H}^+$ (resp.\ $\mathcal{H}^-$). We finally introduce a few further conformal mappings:
\begin{itemize}
    \item $\psi_{1}=\phi\circ\pi_{1}:\mathcal S_1^+\to \mathcal{H}^+$;
    \item $\psi_{2}=\phi\circ\pi_{2}:\mathcal S_1^+\to \mathcal{H}^-$;
    \item $\pi_{10},\pi_{20},\phi_{0},\psi_{10},\psi_{20}$ denote respectively the inverses of $\pi_{1},\pi_{2},\phi,\psi_{1},\psi_{2}$.
\end{itemize}

The following lemma presents some crucial properties of $\psi_{10}$ and $\psi_{20}$.
\begin{lemma}\label{lem:psi_{10}-psi_{20}}
We have:
\begin{enumerate}[label=\textnormal{(\roman{*})},ref=\textnormal{(\roman{*})}]
    \item\label{item:psi-0}$\psi_{10}(\infty)=1$ and $\psi_{20}(\infty)=1$;
    \item\label{item:psi-1}$\psi_{20}(t) = \psi_{10}(-t)$ for all $t\in\mathcal{H}^-$;
    \item\label{item:psi-2}$\psi_{10}(\overline{t})=\overline{\psi_{10}(t)}$ for all $t\in\mathcal{H}^+$;
    \item\label{item:psi-3}$\psi_{20}(\overline{t})=\overline{\psi_{20}(t)}$ for all $t\in\mathcal{H}^-$;
    \item\label{item:psi-4}$\psi_{20}^-(t)=\overline{\psi_{10}^+(t)}$ for all $t\in {i}\mathbb{R}$, i.e., $\psi_{10}^+\times\psi_{20}^-({i}\mathbb{R})=\mathcal{K}$.
    \end{enumerate}
\end{lemma}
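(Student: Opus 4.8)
The strategy is to reduce everything to elementary properties of the two conformal maps $\pi_1$ and $\phi$ established in Lemma~\ref{lem:pi_{1}} and the display~\eqref{eq:expression_phi}, together with the Schwarz-type symmetry already proved inside the proof of Lemma~\ref{lem:pi_{1}} (namely $\pi_1(\overline x)=\overline{\pi_1(x)}$). Recall $\psi_1=\phi\circ\pi_1$, $\psi_2=\phi\circ\pi_2$ with $\pi_2=1/\pi_1$, and $\psi_{10},\psi_{20}$ are their inverses. I will also use $\phi_0$, the inverse of $\phi$, which one computes directly from \eqref{eq:expression_phi} to be $\phi_0(w)=\frac{w-1}{w+1}$, so that $\phi_0(\infty)=1$, $\phi_0(\overline w)=\overline{\phi_0(w)}$, and $\phi_0(-w)=1/\phi_0(w)$.

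For \ref{item:psi-0}: since $\psi_1=\phi\circ\pi_1$, its inverse is $\psi_{10}=\pi_{10}\circ\phi_0$. As $\phi_0(\infty)=1$ and, by Lemma~\ref{lem:pi_{1}}, $\pi_1^+(1)=1$ hence $\pi_{10}(1)=1$ in the boundary sense, we get $\psi_{10}(\infty)=1$; the same argument with $\pi_2=1/\pi_1$ (so $\pi_2^+(1)=1$ as well) gives $\psi_{20}(\infty)=1$. For \ref{item:psi-2} and \ref{item:psi-3}: the identity $\pi_1(\overline x)=\overline{\pi_1(x)}$ from the proof of Lemma~\ref{lem:pi_{1}} immediately gives $\pi_2(\overline x)=\overline{\pi_2(x)}$; composing with $\phi$, which satisfies $\phi(\overline z)=\overline{\phi(z)}$ (clear from \eqref{eq:expression_phi}), we obtain $\psi_j(\overline x)=\overline{\psi_j(x)}$ for $j=1,2$, and passing to inverses yields \ref{item:psi-2} and \ref{item:psi-3}. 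For \ref{item:psi-1}: the key algebraic fact is that $\pi_2=1/\pi_1$ together with $\phi\circ(1/\cdot)=-\phi$, i.e. $\phi(1/z)=-\phi(z)$, which is a one-line check from \eqref{eq:expression_phi}. Hence $\psi_2=\phi\circ\pi_2=\phi\circ(1/\pi_1)=-(\phi\circ\pi_1)=-\psi_1$ as maps on $\mathcal S_1^+$. Therefore, for $t$ in the appropriate half-plane, $\psi_{20}(t)$ is the unique point $x$ with $\psi_2(x)=t$, i.e. $-\psi_1(x)=t$, i.e. $\psi_1(x)=-t$, i.e. $x=\psi_{10}(-t)$; this is \ref{item:psi-1}, valid for $t\in\mathcal H^-$ since then $-t\in\mathcal H^+$, the domain of $\psi_{10}$.

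For \ref{item:psi-4}, which I expect to be the only genuinely delicate point: we must pass to boundary values on $i\mathbb R$, the common boundary of $\mathcal H^+$ and $\mathcal H^-$. Fix $t\in i\mathbb R$, so $\overline t=-t$. Starting from \ref{item:psi-1} in the limit, $\psi_{20}^-(t)=\psi_{10}^+(-t)$ (the $-$ boundary value of $\psi_{20}$, approached from $\mathcal H^-$, matches the $+$ boundary value of $\psi_{10}$ at $-t$, approached from $\mathcal H^+$); then applying \ref{item:psi-2} in the boundary sense, $\psi_{10}^+(-t)=\psi_{10}^+(\overline t)=\overline{\psi_{10}^+(t)}$, giving $\psi_{20}^-(t)=\overline{\psi_{10}^+(t)}$. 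The care needed here is to justify that the boundary values exist and that the functional identities survive the limit: this uses the smoothness of $\mathcal S_1\setminus\{1\}$ from Lemma~\ref{lemma:angle_at_1} (so that, away from the preimage of $1$, $\psi_{10}^+$ and $\psi_{20}^-$ extend continuously to $i\mathbb R$ by the reflection principle / Carathéodory boundary correspondence), while at the single exceptional boundary point one appeals to \ref{item:psi-0} and a continuity argument. Finally, the reformulation $\psi_{10}^+\times\psi_{20}^-(i\mathbb R)=\mathcal K$ follows by recalling the description $\mathcal K=\{(\eta(s)s,\overline{\eta(s)s}):|s|=1\}$ from Section~\ref{subsec:preliminary}: the pair $(x,\overline x)$ with $x\in\mathcal S_1$ ranges exactly over $\mathcal K$, and $\psi_{10}^+$ maps $i\mathbb R$ onto $\mathcal S_1$ (boundary correspondence for the conformal map $\psi_1:\mathcal S_1^+\to\mathcal H^+$), while $\psi_{20}^-(t)=\overline{\psi_{10}^+(t)}$ supplies the conjugate coordinate.
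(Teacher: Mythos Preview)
Your proof is correct and follows essentially the same approach as the paper. The arguments for \ref{item:psi-0} and \ref{item:psi-1} are identical to the paper's (reducing to $\phi_0(\infty)=1$, $\pi_{10}(1)=1$, and the identity $\phi(1/z)=-\phi(z)$); the paper actually omits the proofs of \ref{item:psi-2} and \ref{item:psi-3}, which you supply correctly from the Schwarz symmetry of $\pi_1$. For \ref{item:psi-4} there is a minor cosmetic difference: the paper works directly on the $\mathcal S_1$ side, using that $\pi_2^+(x)=1/\pi_1^+(x)=\overline{\pi_1^+(x)}=\pi_1^+(\overline{x})$ (the middle equality because $\pi_1^+(x)$ lies on the unit circle), whereas you obtain the same conclusion by passing \ref{item:psi-1} and \ref{item:psi-2} to the boundary and using $\overline t=-t$ on $i\mathbb R$; the two computations are equivalent reformulations of one another.
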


\begin{proof}
Item \ref{item:psi-0} follows by construction: $\psi_{10}(\infty)=\pi_{10}\circ \phi_{0}(\infty)=\pi_{10}(1)=1$, see \eqref{eq:expression_phi} and Lemma \ref{lem:pi_{1}}. Similar computations hold for $\psi_{20}(\infty)$.

In order to prove \ref{item:psi-1}, we show equivalently that for all $x\in\mathcal{S}_1^+$, $\psi_{2}(x) =- \psi_{1}(x)$. This comes from the construction of $\pi_{2}$ and $\phi$. Indeed, for all $x\in\mathcal{S}_1^+$,
\begin{equation}
\label{eq:psi_{2}=-psi_{1}}
    \psi_{2}(x) = \phi(
    \pi_{2}(x))= \phi\left(\frac{1}{\pi_{1}(x)}\right) = - \phi(\pi_{1}(x)) = -\psi_{1}(x),
\end{equation}
see \eqref{eq:pi_{2}} and \eqref{eq:expression_phi}.

We conclude by proving \ref{item:psi-4}. We show equivalently that for all $x\in\mathcal{S}_1$, $\psi_{2}^+(x)=\psi_{1}^+(\overline{x})$. Notice that for $x\in\mathcal{S}_1$, since $\pi_{2}^+(x)=\frac{1}{\pi_{1}^+(x)}=\pi_{1}^+(\overline{x})$, then $\psi_{2}^+(x)=\psi_{1}^+(\overline{x})$. The proof is complete.
\end{proof}

\begin{lemma}
\label{lem:psi_{1}(0)}
The following assertions hold true:
\begin{enumerate}[label=\textnormal{(\roman{*})},ref=\textnormal{(\roman{*})}]
	\item\label{lemma:psi_{1}(0)-0}The asymptotic behavior of $\psi_{1}$ around $1$ is
	\begin{equation*}
		\psi_{1}(x)\sim \frac{c}{(1-x)^{\pi/\theta}},
	\end{equation*}
	for some non-zero constant $c$ and $\theta$ as in \eqref{eq:angle_at_1}. 
		\item\label{lemma:psi_{1}(0)-1} In the case $p_{1,1}=0$, $\psi_{1}$ can be extended analytically around $0$, such that $\psi_{1}(0)=0$ and $\psi_{1}'(0)\neq 0$.
		\item\label{lemma:psi_{1}(0)-2} In the case $p_{1,1}\neq 0$, $\psi_{1}(0)>0$. Without loss of generality, we will assume that $\psi_{1}(0)=p_{1,1}$.
	\end{enumerate}
\end{lemma}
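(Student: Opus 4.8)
\textbf{Proof plan for Lemma \ref{lem:psi_{1}(0)}.}

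The plan is to treat the three assertions separately, as they concern the behavior of $\psi_1$ at two distinct points. For \ref{lemma:psi_{1}(0)-0}, the idea is to transfer the known local geometry of the curve $\mathcal S_1$ near $1$ — namely, that it has a corner with interior angle $\theta$ (Lemma \ref{lemma:angle_at_1}) — into the asymptotics of the conformal map $\pi_1$, and then compose with $\phi$. Since $\mathcal S_1^+$ is a Jordan domain with a corner of opening $\theta$ at the boundary point $1$, a standard corner-type estimate for conformal mappings (a Warschawski/Lehman-type result, or the elementary comparison with the power map $w \mapsto w^{\pi/\theta}$ after a local straightening) gives that $\pi_1(x) \to 1 = \pi_1^+(1)$ with $1 - \pi_1(x) \asymp (1-x)^{\theta/\pi}$ as $x \to 1$ inside $\mathcal S_1^+$; more precisely $1-\pi_1(x) \sim c'(1-x)^{\theta/\pi}$ for a nonzero constant $c'$. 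Feeding this into $\psi_1 = \phi \circ \pi_1$ with $\phi(z) = -\frac{z+1}{z-1}$, which has a simple pole at $1$, yields $\psi_1(x) = -\frac{\pi_1(x)+1}{\pi_1(x)-1} \sim \frac{-2}{-c'(1-x)^{\theta/\pi}} = \frac{c}{(1-x)^{\pi/\theta}}$, with $c = 2/c' \neq 0$. I expect the justification of the corner asymptotic to be the most delicate point; one must quote the precise conformal-mapping-at-a-corner theorem and check that the corner of $\mathcal S_1$ is genuinely an analytic corner (two one-sided analytic arcs meeting at angle $\theta$), which is exactly what the semi-differentiability of $\eta$ at $1$ established in the proof of Lemma \ref{lemma:angle_at_1} provides.

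For \ref{lemma:psi_{1}(0)-1}, in the case $p_{1,1}=0$ we know from Lemma \ref{lem:pi_{1}} that $\pi_1^+(0) = -1$, and that $0 \in \mathcal S_1$ is a boundary point of $\mathcal S_1^+$. The point is that $0$ is a \emph{smooth} (indeed analytic) boundary point of $\mathcal S_1$: by Lemma \ref{lemma:angle_at_1}, $\mathcal S_1 \setminus \{1\}$ is smooth, and near $0$ one has the local description $K(x,y) = h(x,y)(y+f(x))$ with $f$ analytic and $f'(0) \neq 0$, so a neighbourhood of $0$ on $\mathcal S_1$ is a piece of an analytic arc. By the reflection principle for conformal maps across an analytic boundary arc, $\pi_1$ extends analytically across $0$, with $\pi_1(0) = -1$ and $\pi_1'(0) \neq 0$ (the derivative is nonzero because the extended map is still conformal, i.e. locally injective, at $0$). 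Since $\phi$ is conformal and analytic in a neighbourhood of $-1$ with $\phi(-1) = 0$, the composition $\psi_1 = \phi \circ \pi_1$ extends analytically across $0$ with $\psi_1(0) = \phi(-1) = 0$ and $\psi_1'(0) = \phi'(-1)\pi_1'(0) \neq 0$.

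For \ref{lemma:psi_{1}(0)-2}, in the case $p_{1,1}\neq 0$ we have from Lemma \ref{lem:pi_{1}}\ref{lem:pi_{1}-2} that $0 \in \mathcal S_1^+$ and $\pi_1(0) \in (-1,1)$; moreover the symmetry $\pi_1(x) = \overline{\pi_1(\overline x)}$ established in the proof of Lemma \ref{lem:pi_{1}} together with $0$ being real forces $\pi_1(0)$ real, and since $\pi_1$ maps $(a_0,1)$ onto $(-1,1)$ increasingly with $\pi_1(a_0) = -1 < \pi_1(0)$ we in fact get $\pi_1(0) \in (-1,1)$. Then $\psi_1(0) = \phi(\pi_1(0)) = -\frac{\pi_1(0)+1}{\pi_1(0)-1}$, and since $\pi_1(0) \in (-1,1)$ the numerator $\pi_1(0)+1 > 0$ and the denominator $\pi_1(0)-1 < 0$, so $\psi_1(0) > 0$. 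Finally, $\psi_1$ is defined only up to the choice of $a \in \mathcal S_1^+ \cap \mathbb R$ and the normalization $\pi_1'(a) > 0$; rescaling amounts to post-composing $\psi_1$ with a suitable automorphism of $\mathcal H^+$ fixing the two real boundary images appropriately, and one checks this freedom allows $\psi_1(0)$ to be set to any prescribed positive value — in particular to $p_{1,1}$, which is the normalization adopted in the statement of Theorem \ref{thm:main_intro-1}. This last normalization step is routine and I would only sketch it.
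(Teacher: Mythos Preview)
Your overall approach matches the paper's: corner asymptotics for $\pi_1$ near $1$, analytic extension across a smooth boundary arc at $0$ when $p_{1,1}=0$, and the sign computation plus normalization freedom when $p_{1,1}\neq 0$. One genuine slip needs fixing, and a couple of points deserve comment.

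\textbf{Part \ref{lemma:psi_{1}(0)-0}: the exponent is inverted.} You write $1-\pi_1(x)\sim c'(1-x)^{\theta/\pi}$ and then, in the last line, silently replace $\theta/\pi$ by $\pi/\theta$ to land on the correct conclusion. The correct intermediate asymptotic is $1-\pi_1(x)\sim c'(1-x)^{\pi/\theta}$: the map $\pi_1$ goes from a domain with interior angle $\theta$ at $1$ to the disk (boundary angle $\pi$), so it must \emph{open} the corner, behaving locally like $w\mapsto w^{\pi/\theta}$. Equivalently, the paper applies the corner theorem (Pommerenke, Thm~3.11) to the inverse map $\pi_{10}:\mathcal C^+\to\mathcal S_1^+$, obtaining $\pi_{10}(z)-1\sim c_1(1-z)^{\theta/\pi}$, and then inverts; this is the direction in which such results are usually stated, and it makes the exponent bookkeeping transparent. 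With the correct exponent, the composition with $\phi$ gives $\psi_1(x)\sim 2/\bigl(c'(1-x)^{\pi/\theta}\bigr)$ directly. You should also note, as you do, that Lemma~\ref{lemma:angle_at_1} supplies the required regularity of the two arcs meeting at $1$.

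\textbf{Part \ref{lemma:psi_{1}(0)-1}.} Your argument is the paper's argument: $\mathcal S_1$ is analytic near $0$, so $\pi_1$ extends by Schwarz reflection with nonvanishing derivative, and then compose with $\phi$. Fine.

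\textbf{Part \ref{lemma:psi_{1}(0)-2}.} Your positivity computation is correct. For the normalization, the paper is slightly more precise than ``an automorphism of $\mathcal H^+$'': because every admissible $\pi_1$ satisfies $\pi_1^+(1)=1$, $\pi_1^+(a_0)=-1$ and is real on the real axis (Lemma~\ref{lem:pi_{1}}), any two choices differ by a disk automorphism fixing $\pm 1$ and preserving $\mathbb R$; conjugating by $\phi$ this becomes a positive dilation $w\mapsto cw$ of $\mathcal H^+$. So the freedom is exactly one positive scalar, which you can spend to set $\psi_1(0)=p_{1,1}$. Your sketch is correct but would benefit from stating this explicitly rather than invoking a general automorphism.
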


\begin{proof}
We first prove \ref{lemma:psi_{1}(0)-0} about the asymptotic behavior of $\psi_{1}$ around $1$. Recall that $\pi_{10}$ maps conformally $\mathcal{C}^+$ onto $\mathcal{S}_1^+$ and the interior angle of $\mathcal{S}_1^+$ at $1$ is $\theta$. Then by \cite[Thm~3.11]{Po-92}, there exists a non-zero constant $c_1$ such that as $z\to 1$,
\begin{equation*}
   \pi_{10}(z)= \pi_{10}(1) + (1-z)^{\theta/\pi}(c_1+o(1)).
\end{equation*}
Hence as $x\to1$, there exists $c\neq 0$ such that
\begin{equation*}
   \pi_{1}(x)=1 + (1-x)^{\pi/\theta}(1/c_1+o(1)) 
   \quad \text{and} \quad \psi_{1}(x) = \phi\circ \pi_{1}(x) \sim \frac{c}{ (1-x)^{\pi/\theta}}.
\end{equation*}
	 

We now prove \ref{lemma:psi_{1}(0)-1}. At the root $(0,0)$ of $K(x,y)$, since $\partial_y K(0,0)=-p_{1,0}\not= 0$, then by the implicit function theorem (\cite[Sec.~B.4]{Gu-65}), there exists a unique function $Y(x)$ analytic in a neighbourhood $V$ of $0$ such that $K(x,Y(x))=0$, for all $x\in V$. One can further point out that $Y(x)$ is a conformal mapping with $V$ small enough, since $Y'(0)=-\partial_x K(0,0)/\partial_y K(0,0) = -1\not= 0$. By the description of $\mathcal{K}$, one knows that $Y(x)=\overline{x}$ for all $x\in\mathcal{S}_1\cap V$, and thus the image of $\mathcal{S}_1^-\cap V$ under $Y(x)$ is contained in $\mathcal{S}_1^+$ (by the principle of corresponding boundaries, \cite[p.~109]{Ev-66}). Since the function
\begin{equation}
\begin{cases}
   \psi_1(x)& \quad\text{if}\quad x\in V\cap S_1^+\\
   \psi_2(Y(x))& \quad\text{if}\quad x\in V\cap S_1^-
\end{cases}
\end{equation}
is continuous and sectionally analytic on $V$, then by Morera's theorem, $\psi_2(Y(x))$ is an analytic continuation of $\psi_1(x)$. Moreover, by \cite[Thm~3.9]{Po-92}, $\psi_1'(0)\not= 0$.
	 
We finally prove \ref{lemma:psi_{1}(0)-2}. Since $\pi_{1}(0)\in(-1,1)$, then $\psi_{1}(0)>0$. It is seen that if in Lemma~\ref{lem:pi_{1}} we choose two different points $a_1$ and $a_2$, and consider the associated mappings $\psi_{1}$ and ${\widetilde\psi}_{10}$, then there exists a constant $c>0$ such that $\psi_{1}=c\widetilde\psi_{1}$. Indeed, since $\phi$ is a conformal mapping from $\mathcal{C}^+$ onto $\mathcal{H}^+$, so is $c\phi$ for any $c>0$. Hence, $c\widetilde\psi_{1}$ is a conformal mapping from $\mathcal{S}_1^+$ onto $\mathcal{H}^+$. One may choose $c=\frac{\psi_{1}(a_2)}{\widetilde\psi_{1}(a_2)}=\psi_{1}(a_2)>0$ and the proof is complete.
\end{proof}



\subsection{Boundary value problems}

We now have collected enough material to construct a BVP, whose solutions relate to the generating functions $H(x,0)$ and $H(0,y)$. For the sake of brevity, we shall denote $KH(x,0)=K(x,0)\times H(x,0)$ and similarly $KH(0,y)=K(0,y)\times H(0,y)$. Define a function $F$ on $\mathbb C\setminus i\mathbb R$ by
\begin{equation}
\label{eq:def_f_BVP}
     F(t) = \left\{
     \begin{array}{lc}
     KH(\psi_{10}(t),0)-\frac{KH(0,0)}{2} & \text{if }  t\in\mathcal{H}^+,\medskip\\
     -KH(0,\psi_{20}(t))+\frac{KH(0,0)}{2} & \text{if }  t\in\mathcal{H}^-,
     \end{array}\right.
\end{equation}
and recall that the notation $F^\pm(t)$ has been introduced at the beginning of Section~\ref{sec:BVP}.
\begin{proposition}
\label{prop:BVP}
Assume \ref{H1:jumps}--\ref{H2:harmonic}, and assume in addition that the radii of convergence of $H(x,0)$ and $H(0,y)$ are greater than or equal to one. Then $F$ in \eqref{eq:def_f_BVP} is sectionally analytic on $\mathbb{C} \setminus i\mathbb R$ and more specifically, $F$ satisfies the following BVP:
\begin{enumerate}[label=\textnormal{(\roman{*})},ref=\textnormal{(\roman{*})}]
    \item\label{item:BVP-1}$F$ is analytic on $\mathcal{H}^+$ and continuous on $\mathcal{H}^+\cup{i}\mathbb{R}$.
	\item\label{item:BVP-2} $F$ is analytic on $\mathcal{H}^-$ and continuous on $\mathcal{H}^-\cup{i}\mathbb{R}$. 
	\item\label{item:BVP-3}For all $t\in{i}\mathbb{R}$, 
\begin{equation}
\label{eq:boundary_condition}
	F^+(t) - F^-(t)=0.
\end{equation}
	\item\label{item:BVP-4}If the associated harmonic function $h$ is non-zero, then $F(\infty)=\infty$.
	\end{enumerate}
\end{proposition}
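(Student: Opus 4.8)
The four assertions of Proposition~\ref{prop:BVP} fall naturally into two groups: the analyticity/continuity claims \ref{item:BVP-1}--\ref{item:BVP-3} (which together say that $F$ is in fact analytic across $i\mathbb{R}$) and the behavior-at-infinity claim \ref{item:BVP-4}. My plan is to handle them in this order.

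For \ref{item:BVP-1}, I would argue as follows. Since by hypothesis the radius of convergence of $H(x,0)$ is at least one, the function $x\mapsto KH(x,0)=K(x,0)H(x,0)$ is analytic on $\mathcal{C}^+$, and in fact extends continuously (indeed analytically, being a power series with radius $\geq 1$, at least on the boundary circle away from potential singularities; but continuity on $\overline{\mathcal{C}^+}$ is what is needed) to $\overline{\mathcal{C}^+}$. On the other hand, Lemma~\ref{lem:psi_{1}(0)} and the discussion of the conformal mappings tell us that $\psi_{10}$ maps $\mathcal{H}^+$ conformally onto $\mathcal{S}_1^+$, and that (using Lemma~\ref{lemma:no_solution}) $K$ has no zeros on $\mathcal{S}_1^+\times\mathcal{S}_1^+$ — in particular $K(x,0)$ is relevant only through the product with $H(x,0)$, which by \eqref{eq:functional_equation} is the genuine analytic object. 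The key point is that $\mathcal{S}_1^+\subset\mathcal{C}^+$ is \emph{not} automatic: rather one needs $\mathcal{S}_1\subset\overline{\mathcal{C}^+}$, which follows from Lemma~\ref{lemma:kernel} since $\eta(s)s$ has modulus $|\eta(s)|\leq 1$. Hence $\psi_{10}:\mathcal{H}^+\to\mathcal{S}_1^+\subset\mathcal{C}^+$, the composition $KH(\psi_{10}(\cdot),0)$ is analytic on $\mathcal{H}^+$, and continuity up to $i\mathbb{R}$ follows from continuity of $\psi_{10}$ up to the boundary (Carath\'eodory, as already invoked in Lemma~\ref{lem:psi_{1}(0)}) together with continuity of $KH(\cdot,0)$ up to $\mathcal{S}_1$. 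Subtracting the constant $KH(0,0)/2$ changes nothing. Assertion \ref{item:BVP-2} is identical, using $\psi_{20}:\mathcal{H}^-\to\mathcal{S}_1^+$ (Lemma~\ref{lem:psi_{10}-psi_{20}}\ref{item:psi-1}) and the radius-of-convergence hypothesis on $H(0,y)$.

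For the gluing condition \ref{item:BVP-3}, this is where the functional equation \eqref{eq:functional_equation} and the symmetry Lemma~\ref{lem:psi_{10}-psi_{20}}\ref{item:psi-4} do the real work. Take $t\in i\mathbb{R}$ and set $(x,y)=(\psi_{10}^+(t),\psi_{20}^-(t))$; by \ref{item:psi-4} this is a point of $\mathcal{K}$, so $K(x,y)=0$ and $|x|=|y|\leq 1$. Evaluating \eqref{eq:functional_equation} at such a point, the left-hand side $K(x,y)H(x,y)$ vanishes (here one must check that $H(x,y)$ stays bounded, using the radius-of-convergence assumption and $|x|=|y|\le 1$), which gives
\begin{equation*}
   KH(x,0)+KH(0,y)-KH(0,0)=0,
\end{equation*}
i.e. $\bigl(KH(\psi_{10}^+(t),0)-\tfrac{KH(0,0)}{2}\bigr)-\bigl(-KH(0,\psi_{20}^-(t))+\tfrac{KH(0,0)}{2}\bigr)=0$, which is precisely $F^+(t)-F^-(t)=0$. (One subtlety: when $|x|=|y|=1$, i.e. at $t=0$ corresponding to $(x,y)=(1,1)$, the product $K(x,y)H(x,y)$ must still be shown to be $0$; here one invokes continuity of $F$ up to $i\mathbb{R}$ already established in \ref{item:BVP-1}--\ref{item:BVP-2}, so \eqref{eq:boundary_condition} at $t=0$ follows by taking limits.) Thus $F^+$ and $F^-$ agree on $i\mathbb{R}$, and by Morera's theorem $F$ extends to an entire function.

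Finally, for \ref{item:BVP-4}: suppose $h\neq 0$ but $F(\infty)\neq\infty$, i.e.\ $F$ is a bounded entire function (bounded near $\infty$, and continuous hence bounded on compacta), so by Liouville $F$ is constant. Tracking back through the definition \eqref{eq:def_f_BVP}: $F\equiv c$ on $\mathcal{H}^+$ means $KH(\psi_{10}(t),0)\equiv c+KH(0,0)/2$ for all $t\in\mathcal{H}^+$, hence $KH(x,0)$ is constant on $\mathcal{S}_1^+$; since it is a power series in $x$ analytic on a neighborhood of $0$ and constant on an open set, $KH(x,0)$ is identically constant, so $K(x,0)H(x,0)\equiv K(0,0)H(0,0)$. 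Similarly $K(0,y)H(0,y)\equiv K(0,0)H(0,0)$. Plugging into \eqref{eq:functional_equation} gives $K(x,y)H(x,y)\equiv K(0,0)H(0,0)$, and extracting the coefficient of $x^{i-1}y^{j-1}$ for $i,j\geq 1$ one reads off $h(i,j)=0$ for all $i,j\geq 1$ (the constant on the right contributes only to the $x^0y^0$-free part after multiplying by $K$; more carefully, $H(x,y)=K(0,0)H(0,0)/K(x,y)$, and matching this against \eqref{eq:form_H} with $F(x)+G(y)\equiv K(0,0)H(0,0)$ forces the numerator to be a nonzero constant, but then $H$ cannot be a power series vanishing on the axes unless it is $0$ — in the case $p_{1,1}\neq 0$ one gets $H(x,y)=H(0,0)$ constant, forcing $h(i,j)=0$ for $i+j\geq 3$ and then $h(1,1)=0$ by \ref{H2:harmonic}; in the case $p_{1,1}=0$, $K(0,0)=0$ so the constant is $0$ outright). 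Either way $h\equiv 0$, a contradiction.

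\textbf{Main obstacle.} I expect the genuinely delicate point to be the boundary regularity needed to make sense of $F^\pm$ and of the evaluation of \eqref{eq:functional_equation} on $\mathcal{K}$ — specifically, justifying that $KH(\cdot,0)$ and $KH(0,\cdot)$ extend continuously up to $\mathcal{S}_1$ and that $K(x,y)H(x,y)\to 0$ as one approaches $\mathcal{K}$, \emph{including the corner point} $(1,1)$ where $\mathcal{S}_1$ is only piecewise smooth (Lemma~\ref{lemma:angle_at_1}). The radius-of-convergence hypothesis is exactly what is designed to handle this, but the corner requires the separate continuity argument sketched above rather than a direct substitution. The Liouville step in \ref{item:BVP-4} is then routine once analyticity across $i\mathbb{R}$ is in hand.
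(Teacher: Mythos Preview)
Your approach is essentially the same as the paper's: items \ref{item:BVP-1}--\ref{item:BVP-2} via $\mathcal{S}_1\subset\overline{\mathcal{C}^+}$ plus Carath\'eodory, item \ref{item:BVP-3} via evaluating the functional equation \eqref{eq:functional_equation} on $\mathcal{K}$ using Lemma~\ref{lem:psi_{10}-psi_{20}}\ref{item:psi-4}, and item \ref{item:BVP-4} via Liouville. The paper's proof is terser but follows the same route; your extra care about the corner point $(1,1)$ in \ref{item:BVP-3} is a reasonable elaboration that the paper glosses over.

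There is, however, a genuine slip in your treatment of \ref{item:BVP-4} in the case $p_{1,1}\neq 0$. From $K(x,y)H(x,y)\equiv K(0,0)H(0,0)$ you write ``one gets $H(x,y)=H(0,0)$ constant'', but this is false: what follows is $H(x,y)=K(0,0)H(0,0)/K(x,y)$, which when $p_{1,1}\neq 0$ is a perfectly good nonzero bivariate power series (indeed, it is the generating function of a nonzero harmonic function $h_0$, as used later in Lemma~\ref{lemma:rectangle_lemma}). So your case analysis does not close.

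The clean fix --- which is exactly what the paper does --- is to exploit that $F$, being a \emph{single} entire constant, must take the same value on both half-planes. Evaluating the $\mathcal{H}^+$-formula at $t$ with $\psi_{10}(t)=0$ gives $c=KH(0,0)/2$; evaluating the $\mathcal{H}^-$-formula at $t$ with $\psi_{20}(t)=0$ gives $c=-KH(0,0)/2$. Hence $KH(0,0)=0$ and $c=0$, so $KH(x,0)\equiv 0$ and $KH(0,y)\equiv 0$, and then \eqref{eq:functional_equation} forces $H\equiv 0$. You had all the ingredients for this (you correctly identified the constants as $KH(0,0)$ on each side), but you never compared the two half-plane constants, and instead went down an incorrect branch.
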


\begin{proof}
We start with the proof of \ref{item:BVP-1}. Since $H(x,0)$ is assumed to be analytic in the unit disk, which contains $\mathcal{S}_1^+$, then $F$ is analytic on $\mathcal H^+$. Moreover, with the exception of the point $1$, the curve $\mathcal{S}_1$ is contained in the open unit disk, so the continuity on $\mathcal{H}^+\cup {i}\mathbb{R}$ follows. Item \ref{item:BVP-2} would be proved along the same lines.

We now prove \ref{item:BVP-3}. By Lemma \ref{lem:psi_{10}-psi_{20}} \ref{item:psi-4}, we have $K(\psi_{10}^+(t),\psi_{20}^-(t))=0$ for all $t\in {i}\mathbb{R}$. So the identity \eqref{eq:boundary_condition} is just a consequence of the functional equation \eqref{eq:functional_equation}.

It remains to prove \ref{item:BVP-4}. If $F$ is bounded at infinity, then by Lemma \ref{lem:constant_functions} below, the function $F$ should be constant, and actually even  identically zero by the fact that if $x=y=0$, then $KH(x,0)=KH(0,y)=KH(0,0)$. 
\end{proof}

\begin{lemma}
\label{lem:constant_functions}
Let $F$ be sectionally analytic on $\mathbb C\setminus i\mathbb R$, satisfying to \ref{item:BVP-1}, \ref{item:BVP-2} and \ref{item:BVP-3} of Proposition~\ref{prop:BVP}. If in addition $F$ is bounded at infinity, then $F$ is a constant function.
\end{lemma}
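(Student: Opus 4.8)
The statement is a Liouville-type rigidity result: a function $F$ that is sectionally analytic on $\mathbb C\setminus i\mathbb R$, analytic and continuous up to the boundary on each half-plane $\mathcal H^+$ and $\mathcal H^-$, agreeing on the imaginary axis in the sense $F^+(t)=F^-(t)$ for $t\in i\mathbb R$, and bounded at infinity, must be constant. The plan is to first glue the two pieces into a single entire function, and then apply a classical Liouville argument.

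\emph{Step 1: analytic continuation across $i\mathbb R$.} Define $\widetilde F$ on $\mathbb C$ by $\widetilde F = F$ on $\mathcal H^+\cup\mathcal H^-$ and, on the line $i\mathbb R$, by the common boundary value $F^+(t)=F^-(t)$. By hypotheses \ref{item:BVP-1} and \ref{item:BVP-2}, $\widetilde F$ is continuous on all of $\mathbb C$, and it is analytic on $\mathbb C\setminus i\mathbb R$. I would then invoke Morera's theorem (or equivalently the symmetry/edge-of-the-wedge principle for a straight analytic boundary arc): a function continuous on a domain and analytic off an analytic arc is analytic on the whole domain. Concretely, for any closed triangle in $\mathbb C$, the integral of $\widetilde F$ around its boundary vanishes — this is immediate if the triangle avoids $i\mathbb R$, and otherwise follows by cutting the triangle along $i\mathbb R$ into pieces, pushing the contour slightly off the line, and using continuity to pass to the limit. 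Hence $\widetilde F$ extends to an entire function.

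\emph{Step 2: Liouville.} Since $F$ is bounded at infinity, $\widetilde F$ is a bounded entire function, hence constant by Liouville's theorem. This forces $F$ to be constant on each of $\mathcal H^+$ and $\mathcal H^-$ with the same constant value, which is exactly the claim.

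\emph{Main obstacle.} The only genuinely delicate point is Step 1, namely justifying that the gluing produces an \emph{analytic} (not merely continuous) function across $i\mathbb R$ from the mere continuity of the one-sided limits $F^\pm$ and the matching condition \eqref{eq:boundary_condition}. One must be slightly careful that "continuous on $\mathcal H^+\cup i\mathbb R$" in \ref{item:BVP-1} really does give a continuous boundary function, and that the Morera contour-splitting argument is legitimate up to the boundary; this is standard but is the step that carries the real content. The boundedness at infinity in Step 2 should be read as: $F$ is bounded on $\{|t|\geq R\}$ for some $R$, which together with continuity on the compact set $\{|t|\leq R\}$ (in each closed half-plane) gives global boundedness of $\widetilde F$ — no additional work is needed there.
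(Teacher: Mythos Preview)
Your proof is correct and follows exactly the same approach as the paper's: glue the two pieces across $i\mathbb R$ using continuity and Morera's theorem to obtain an entire function, then apply Liouville. The paper's proof is in fact more terse than yours, simply asserting that sectional analyticity plus continuity across the boundary yields an entire function, and then invoking Liouville.
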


\begin{proof}
Since $F$ is sectionally analytic on $\mathbb{C}\setminus i\mathbb{R}$ and continuous on $\mathbb{C}$ ($F^+(t)=F^-(t)$ on ${i}\mathbb{R}$ by \eqref{eq:boundary_condition}), then $F$ is an entire function. Moreover, since $F$ is bounded, then by Liouville's theorem, $F$ is a constant function.
\end{proof}

\subsection{Polynomial solutions to the boundary value problem}  
\label{sec:polynomial_solutions}

As shown in~Proposition~\ref{prop:BVP}~\ref{item:BVP-4}, the solutions $F$ to the BVP cannot be bounded at infinity for non-trivial harmonic functions. It is thus natural to look at functions $F$ of (polynomial) order $n$ at infinity, i.e., such that $F(t)=O(t^n)$, for some $n\geq 1$. Such functions will be called polynomial solutions and are studied in this section. 


\begin{cor}
\label{cor:solutions-poly_BVP}
Given that $F$ in \eqref{eq:def_f_BVP} has a pole of order $n>0$ at infinity, then $F$ is a polynomial of degree $n$ satisfying to the following conditions:
    \begin{align}
    \label{eq:condition_p11} &F(p_{1,1})=-F(-p_{1,1})=\frac{KH(0,0)}{2}=-\frac{1}{2}p_{1,1}h(1,1).
    \end{align}
    We then have, for $x,y\in\mathcal{S}_1^+$,
    \begin{align}\label{eq: KH(x,0),KH(0,y)}
        KH(x,0) &= F(\psi_{1}(x)) + \frac{KH(0,0)}{2},\\
        KH(0,y) &= -F(-\psi_{1}(y)) + \frac{KH(0,0)}{2},\\
        \label{eq: KH(x,y)}H(x,y) &= \frac{F(\psi_{1}(x))-F(-\psi_{1}(y))}{K(x,y)}.
    \end{align}
\end{cor}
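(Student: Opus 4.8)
The plan is to extract everything from Proposition \ref{prop:BVP} together with the reflection/symmetry properties of $\psi_{10}, \psi_{20}$ recorded in Lemma \ref{lem:psi_{10}-psi_{20}}. First I would argue that $F$ is a polynomial of degree $n$: by Proposition \ref{prop:BVP} \ref{item:BVP-1}--\ref{item:BVP-3} the function $F$ is sectionally analytic on $\mathbb{C}\setminus i\mathbb{R}$ and, since $F^+ = F^-$ on $i\mathbb{R}$, it extends to an entire function (this is exactly the argument in the proof of Lemma \ref{lem:constant_functions}, minus the boundedness hypothesis). An entire function with a pole of order $n$ at infinity is a polynomial of degree $n$; this replaces the Liouville step.

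Next I would establish the conditions \eqref{eq:condition_p11}. Evaluating the definition \eqref{eq:def_f_BVP} of $F$ at the preimages of $0$ under $\psi_1$: in the case $p_{1,1}\neq 0$ one has $\psi_1(0)=p_{1,1}$ by Lemma \ref{lem:psi_{1}(0)} \ref{lemma:psi_{1}(0)-2}, i.e.\ $\psi_{10}(p_{1,1})=0$, so $F(p_{1,1}) = KH(0,0) - \tfrac{1}{2}KH(0,0) = \tfrac{1}{2}KH(0,0)$ directly from the $\mathcal{H}^+$ branch. For $F(-p_{1,1})$ I would use Lemma \ref{lem:psi_{10}-psi_{20}} \ref{item:psi-1}: $\psi_{20}(-p_{1,1}) = \psi_{10}(p_{1,1}) = 0$, hence from the $\mathcal{H}^-$ branch $F(-p_{1,1}) = -KH(0,0) + \tfrac12 KH(0,0) = -\tfrac12 KH(0,0)$. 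Finally $KH(0,0) = K(0,0)H(0,0) = p_{1,1}\cdot h(1,1)$ from \eqref{eq:def_K} and \eqref{eq:generating_functions_harmonic_functions} (the constant term of $H$ is $h(1,1)$, and $K(0,0) = -p_{1,1}$, so actually $KH(0,0) = -p_{1,1}h(1,1)$; care with signs here gives the stated $-\tfrac12 p_{1,1}h(1,1)$). In the case $p_{1,1}=0$ the point $0$ lies on $\mathcal{S}_1$, so I would instead pass to the boundary limit $t\to 0$ along $i\mathbb{R}$, using $\psi_{10}^+(0)$ and $\psi_{20}^-(0)$; since then $KH(0,0)=0$ as well, \eqref{eq:condition_p11} degenerates to $F(0)=0$, consistent with $F$ being characterized by a power series with vanishing constant term.

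For the expressions \eqref{eq: KH(x,0),KH(0,y)}--\eqref{eq: KH(x,y)} I would simply invert the definitions. For $x\in\mathcal{S}_1^+$, setting $t = \psi_1(x)\in\mathcal{H}^+$ gives $\psi_{10}(t) = x$, so the first line of \eqref{eq:def_f_BVP} reads $F(\psi_1(x)) = KH(x,0) - \tfrac12 KH(0,0)$, which is the first displayed identity. Likewise for $y\in\mathcal{S}_1^+$, put $t = -\psi_1(y)$; by \eqref{eq:psi_{2}=-psi_{1}} this equals $\psi_2(y)\in\mathcal{H}^-$, so $\psi_{20}(t) = y$ and the second branch of \eqref{eq:def_f_BVP} yields $F(-\psi_1(y)) = -KH(0,y) + \tfrac12 KH(0,0)$, the second identity. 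Adding the two and invoking the functional equation \eqref{eq:functional_equation} in the form $K(x,y)H(x,y) = KH(x,0) + KH(0,y) - KH(0,0)$ gives $K(x,y)H(x,y) = F(\psi_1(x)) - F(-\psi_1(y))$, hence \eqref{eq: KH(x,y)}.

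The only genuinely delicate point is the step "$F$ is a polynomial": one must be sure that the continuity of $F$ up to $i\mathbb{R}$ from both sides (Proposition \ref{prop:BVP} \ref{item:BVP-1}--\ref{item:BVP-2}) together with the matching condition \eqref{eq:boundary_condition} really does force analytic continuation across the line — this is a Morera/Painlevé-type removable-singularity argument for a continuous function analytic off a line — and then that "pole of order $n$ at infinity" is interpreted correctly for an entire function (growth $O(|t|^n)$), giving a degree-$n$ polynomial via the generalized Liouville theorem. Everything else is bookkeeping with the symmetries of $\psi_{10}$, $\psi_{20}$ and careful sign tracking in $KH(0,0) = K(0,0)H(0,0)$.
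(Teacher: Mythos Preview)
Your proposal is correct and follows essentially the same approach as the paper: entire extension via the matching condition on $i\mathbb{R}$, the generalized Liouville theorem to conclude $F$ is a polynomial, evaluation at $t=\pm p_{1,1}$ using $\psi_{10}(p_{1,1})=\psi_{20}(-p_{1,1})=0$ (with the boundary-limit variant when $p_{1,1}=0$), and finally inversion of \eqref{eq:def_f_BVP} combined with the functional equation \eqref{eq:functional_equation}. Your sign tracking for $KH(0,0)=-p_{1,1}h(1,1)$ is also correct.
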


\begin{proof}
With a proof similar to that of Lemma \ref{lem:constant_functions}, we derive that $F$ is an entire function. Since $F$ has a pole of order $n$ at infinity, then by the extended version of Liouville's theorem, $F$ is a polynomial.

The identity \eqref{eq:condition_p11} is derived very naturally. First, in the case $p_{1,1}=0$, since $\psi_{10}^+(0)=0$ (see Lemma \ref{lem:psi_{1}(0)}) and $KH(0,0)=0$, one must have $F(0) = KH(\psi_{10}^+(0),0)=0$. In the case $p_{1,1}\neq 0$, since $\psi_{10}(p_{1,1})=\psi_{20}(-p_{1,1})=0$ (see again Lemma \ref{lem:psi_{1}(0)}), one has
\begin{equation*}
   F(p_{1,1}) = -F(-p_{1,1})=KH(\psi_{10}(p_{1,1}),0)-\frac{KH(0,0)}{2}=\frac{KH(0,0)}{2}.
\end{equation*}
Applying into the solutions the inverses of $\psi_{10}$ and $\psi_{20}$, which are $\psi_{1}$ and $\psi_{2}$, we derive \eqref{eq: KH(x,0),KH(0,y)}. Equation \eqref{eq: KH(x,y)} is deduced from \eqref{eq: KH(x,0),KH(0,y)} together with the main functional equation \eqref{eq:functional_equation}.
\end{proof}



\section{Proof of our main results (Theorems \ref{thm:main_intro-1} and \ref{thm:main_intro-2})}
\label{sec:solution_BVP}

This part is structured as follows: we successively prove Theorem~\ref{thm:main_intro-1}~\ref{thm:main_intro-1:it1}, Theorem~\ref{thm:main_intro-1}~\ref{thm:main_intro-1:it2}, Theorem~\ref{thm:main_intro-1}~\ref{thm:main_intro-1:it3} and Theorem~\ref{thm:main_intro-2}. Finally, in the independent Section~\ref{sec:sym_antisym}, we study some features of symmetric and anti-symmetric harmonic functions.

\subsection{Proof of Theorem \ref{thm:main_intro-1} \ref{thm:main_intro-1:it1} and \ref{thm:main_intro-1:it2}}

\begin{proof}
Let $P_n$ be the family of polynomials introduced in \eqref{eq:def_P_n} and $H_n$ the associated bivariate function \eqref{eq:expression_H_n}. 
Let us first prove that $H_n$ defines a bivariate power series. This is obvious in the case $p_{1,1}\neq 0$, since $K(0,0)\neq 0$. We therefore assume that $p_{1,1}=0$. In this case $P_{n}(t)=t^n$, and one can rewrite \eqref{eq:expression_H_n} as
\begin{equation*}
	H_n(x,y) = \frac{\psi_{1}(x)^n-(-\psi_{1}(y))^n}{K(x,y)} = \frac{\psi_{1}(x)+\psi_{1}(y)}{K(x,y)}\sum_{k=0}^{n-1}\psi_{1}(x)^k(-\psi_{1}(y))^{n-1-k}.
\end{equation*}
By Lemma \ref{lem:psi_{1}(0)}, $\psi_{1}$ is defined on a neighbourhood of $0$. Since $\psi_{1}(x)=-\psi_{1}(y)$ on a neighbourhood of $(0,0)$ in the one-dimensional real variety $\mathcal{K}$ in \eqref{eq:main_domain}
(see Lemma \ref{lem:psi_{10}-psi_{20}} \ref{item:psi-4} and its proof), by analyticity of $\psi_{1}$ we have $\psi_{1}(x)=-\psi_{1}(y)$ on a neighbourhood of $(0,0)$ in the one-dimensional complex variety where $K(x,y)=0$.

Now recall that on a neighbourhood of $(0,0)$,
\begin{equation*}
   \frac{\partial}{\partial y}\bigl(\psi_{1}(x)+\psi_{1}(y)\bigr)\neq 0 \quad \text{and}\quad \frac{\partial}{\partial y}K(x,y)\neq 0.
\end{equation*}
Hence, by the Weierstrass preparation theorem for analytic functions in several variables (see Appendix~\ref{sec:app_W}), there exist two functions $u(x,y)$ and $v(x,y)$ analytic in a neighbourhood of $(0,0)$ and not vanishing at $(0,0)$, as well as a function $g(x)$ analytic in a neighbourhood of $0$ and vanishing at $0$, such that 
\begin{equation*}
    \frac{\psi_{1}(x)+\psi_{1}(y)}{K(x,y)} = \frac{u(x,y)(y-g(x))}{v(x,y)(y-g(x))} = \frac{u(x,y)}{v(x,y)}.
\end{equation*}
This implies that $H_n(x,y)$ is analytic in a neighbourhood of $(0,0)$, i.e., $H_n(x,y)$ is the generating function of a function $h_n$ on the quadrant. Since $H_{n}$ satisfies the main functional equation \eqref{eq:functional_equation}, $h_n$ is a harmonic function. 

Since $\psi_{1}$ is defined on $\mathcal{S}_1^+$ and $K(x,y)$ does not have any solution in $\mathcal{S}_1^+\times\mathcal{S}_1^+$ by Lemma~\ref{lemma:no_solution}, then $H_n(x,y)$ is analytic in $\mathcal{S}_1^+\times\mathcal{S}_1^+$. The proof is complete.
\end{proof}

\subsection{Proof of Theorem \ref{thm:main_intro-1} \ref{thm:main_intro-1:it3}}

Our main result in this part will be stated under Proposition~\ref{prop:main_intro-1:it3} and appears as a refinement of Theorem~\ref{thm:main_intro-1}~\ref{thm:main_intro-1:it3}. We first introduce some necessary notation. The Laplace transform of a discrete function $f$ on $\mathbb{N}^2$ is defined as
\begin{equation*}
	\mathcal{L}f(x,y) = \sum_{u,v=0}^\infty f(u,v){e}^{-(ux+vy)}.
\end{equation*}
For a measurable function $f$ on the quadrant $\mathcal{Q}$, its Laplace transform is defined by 
\begin{equation*}
	\mathcal{L}f(x,y)=\iint_{[0,\infty)^2} f(u,v)e^{-(ux+vy)}dudv.
\end{equation*}
Observe that the above Laplace transforms are well defined (and analytic) on $\mathcal{H}^+\times \mathcal{H}^+$ as soon as the growth of $f$ at infinity is at most polynomial.

Finally, we introduce
\begin{equation}
	\label{eq:h_n^sigma}
	h_n^\sigma (x,y) = \Im \bigl((x/\sin\theta+y\cot\theta+{i}y)^{n\pi/\theta}\bigr)= g_n\bigl(x/\sin\theta+y\cot\theta ,y\bigr),
\end{equation}
with $g_n(x,y) = \Im\bigl((x+{i}y)^{n\pi/\theta}\bigr)$. It is easily checked that setting
\begin{equation}
\label{eq:continuous_Laplacian_theta}
	\Delta = \frac{\partial^2}{\partial x^2} - 2\cos\theta\frac{\partial^2}{\partial x\partial y} +\frac{\partial^2}{\partial y^2},
\end{equation}
one has $\Delta h_n^\sigma=0$. Notice that \eqref{eq:continuous_Laplacian_theta} is exactly \eqref{eq:continuous_Laplacian} with $\sigma_1=\sigma_2$ and $\theta = \arccos\frac{-\sigma_{12}}{\sqrt{\sigma_{1}\sigma_{2}}}$, see \eqref{eq:angle_at_1}.

\begin{proposition}
\label{prop:main_intro-1:it3}
Let $h_n$ be the harmonic function with generating function $H_n$ defined by \eqref{eq:expression_H_n}, and $h_n^\sigma$ be the continuous harmonic function defined in \eqref{eq:h_n^sigma}. Then there exists a positive constant $c$ such that for all $x,y>0$,
\begin{equation*}
    \lim_{m\to\infty} \frac{c}{m^{n\pi/\theta +1}}\mathcal{L}h_n( mx, my) = \mathcal{L}h_n^\sigma (x,y).
\end{equation*}
\end{proposition}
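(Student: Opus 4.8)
The strategy is to extract the leading-order asymptotics of the Laplace transform $\mathcal{L}h_n(mx,my)$ directly from the explicit generating function $H_n(x,y)$ in \eqref{eq:expression_H_n}, by relating the Laplace transform to $H_n$ evaluated near the corner point $(1,1)$ and then substituting the local expansion of $\psi_1$ near $1$ from Lemma~\ref{lem:psi_{1}(0)}~\ref{lemma:psi_{1}(0)-0}. The point is that the rescaling $m\to\infty$ in the discrete variables corresponds, on the generating-function side, to looking at $H_n$ as $(x,y)\to(1,1)$ along rays $x=e^{-u/m}$, $y=e^{-v/m}$, which is exactly the régime governed by the singularity of $\psi_1$ at $1$.

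First I would write, for a function $f$ of at most polynomial growth, $\mathcal{L}f(mx,my)=\sum_{i,j\ge1}f(i,j)e^{-(imx+jmy)}$, and note this is $H_f$-like: precisely, $\mathcal{L}h_n(mx,my) = e^{-m(x+y)}\sum_{i,j\ge1}h_n(i,j)(e^{-mx})^{i-1}(e^{-my})^{j-1} = e^{-m(x+y)}H_n(e^{-mx},e^{-my})$. So the whole question reduces to the behavior of $H_n(e^{-mx},e^{-my})$ as $m\to\infty$. By \eqref{eq:expression_H_n} this equals $\frac{P_n(\psi_1(e^{-mx}))-P_n(-\psi_1(e^{-my}))}{K(e^{-mx},e^{-my})}$. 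For the numerator, I would use that $P_n$ has leading term $X^n$ (degree $n$), together with Lemma~\ref{lem:psi_{1}(0)}~\ref{lemma:psi_{1}(0)-0}: $\psi_1(e^{-mx})\sim c(1-e^{-mx})^{-\pi/\theta}\sim c(mx)^{-\pi/\theta}$ — wait, this blows up, so more carefully $\psi_1(x)\to\infty$ as $x\to1$, hence $\psi_1(e^{-mx})\sim c\,(mx)^{-\pi/\theta}$ is wrong in sign of exponent; since $1-e^{-mx}\to1$ is bounded away from $0$, actually $e^{-mx}\to0$, not $1$. I must instead expand near $(0,0)$: as $m\to\infty$, $e^{-mx}\to0$, so $\psi_1(e^{-mx})\to\psi_1(0)$, which is finite ($0$ or $p_{1,1}$). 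That gives a finite limit for numerator and, when $p_{1,1}=0$, $K(e^{-mx},e^{-my})\to K(0,0)=0$, producing the nontrivial scaling. So the correct régime is the \emph{origin}, and the $m^{n\pi/\theta+1}$ factor must come from a more delicate local analysis there; I would instead Laplace-invert via a contour/Tauberian argument, writing $\mathcal{L}h_n(mx,my)$ as an integral of $H_n$ against a kernel and deforming to pick up the dominant singularity of $\psi_1$ at the \emph{other} real point of $\mathcal{S}_1$, namely $1$, which is where the $(1-x)^{-\pi/\theta}$ singularity lives and where the exponent $n\pi/\theta$ enters.

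Concretely, the cleanest route: use the integral representation $\mathcal{L}h_n(x,y) = \frac{1}{(2\pi i)^2}\oint\oint H_n(u,v)\,\frac{du}{u}\,\frac{dv}{v}\cdot(\text{something})$ — more precisely I would go through the known correspondence between the discrete harmonic function's Laplace transform and a double Cauchy integral of $H_n$ over circles of radius slightly less than $1$, then rescale $u=e^{-s/m}$, $v=e^{-t/m}$ and apply a multivariate Tauberian theorem of Laplace type: the leading asymptotics of $\mathcal{L}h_n(mx,my)$ as $m\to\infty$ is dictated by the behavior of $H_n(u,v)$ as $(u,v)\to(1,1)$. Near $(1,1)$, by Theorem~\ref{thm:main_intro-1}~\ref{thm:main_intro-1:it2} and Lemma~\ref{lemma:angle_at_1}, $\psi_1$ has the singular expansion $\psi_1(x)\sim c(1-x)^{-\pi/\theta}$, so $P_n(\psi_1(x))\sim c^n(1-x)^{-n\pi/\theta}$, and $K(x,y)$ vanishes to first order along $\mathcal{S}_1$ near $(1,1)$; a careful local coordinate change (using the corner-angle computation \eqref{eq:derivative_kernel_3}) shows $H_n(u,v)$ has, near $(1,1)$, the leading singular part $\asymp |1-u|^{-n\pi/\theta}$ times the "angular" function that, after the linear substitution in \eqref{eq:h_n^sigma} relating $\theta$ to the covariance, is exactly $h_n^\sigma$. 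Applying the Laplace-Tauberian passage then yields $\mathcal{L}h_n(mx,my)\sim \frac{1}{c}m^{n\pi/\theta+1}\mathcal{L}h_n^\sigma(x,y)$, with the extra "$+1$" coming from the two-dimensional Jacobian $m^{-2}$ from $du\,dv$ against the $m^{-1}$ from... — the bookkeeping of powers of $m$ I would do at the end.

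\textbf{Main obstacle.} The delicate point is making the passage from "$H_n$ has such-and-such singular behavior at $(1,1)$" to "$\mathcal{L}h_n(mx,my)$ has the claimed asymptotics" fully rigorous: this is a two-variable Tauberian/transfer step, and the singularity of $H_n$ at $(1,1)$ is not of a standard algebraic product type (it lives on the curve $\mathcal{S}_1$, and the corner at $1$ makes $\psi_1$'s singularity non-meromorphic with a $\theta$-dependent exponent). I expect the hard work to be: (i) justifying a contour deformation onto a neighborhood of $(1,1)$ while controlling the contribution away from the corner (which needs Theorem~\ref{thm:main_intro-1}~\ref{thm:main_intro-1:it2}, analyticity in $\mathcal{S}_1^+\times\mathcal{S}_1^+$, plus a bound on $H_n$ near the rest of $\mathcal{S}_1$); (ii) computing the local model at $(1,1)$ in the right coordinates — I would parametrize $\mathcal{S}_1$ near $1$ by its semi-derivatives $\partial_\pm\eta(1)$ from the proof of Lemma~\ref{lemma:angle_at_1}, change variables so the kernel becomes linear and the conformal map becomes the power map $w\mapsto w^{\pi/\theta}$, and recognize $\Im(w^{n\pi/\theta})$; (iii) matching the resulting continuous Laplace integral with $\mathcal{L}h_n^\sigma$ via the affine change of variables $x\mapsto x/\sin\theta+y\cot\theta$ appearing in \eqref{eq:h_n^sigma}, which converts the standard sector of opening $\theta$ into the quadrant. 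Everything else (the identity $\mathcal{L}h_n(mx,my)=e^{-m(x+y)}H_n(e^{-mx},e^{-my})$ when convergent, the fact that $\Delta h_n^\sigma=0$, the normalization of $c$) is routine.
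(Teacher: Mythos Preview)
Your confusion in the first paragraph is well-founded, but it stems from a scaling inconsistency in the paper's statement rather than from a genuine difficulty. Your identity
\[
\mathcal{L}h_n(mx,my)=e^{-m(x+y)}H_n(e^{-mx},e^{-my})
\]
is correct for the discrete Laplace transform as the paper defines it, and you are right that it forces evaluation at $(0,0)$, where nothing interesting happens. The paper's own proof, however, immediately writes
\[
\mathcal{L}h_n(mx,my)=\frac{e^{-(x+y)/m}}{m}\,H_n\bigl(e^{-x/m},e^{-y/m}\bigr),
\]
which is \emph{not} the same quantity; the intended object is (up to harmless factors) $\mathcal{L}h_n(x/m,y/m)$, i.e.\ the Laplace transform of the rescaled lattice function in Theorem~\ref{thm:main_intro-1}\,\ref{thm:main_intro-1:it3}. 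So the correct régime is indeed $(e^{-x/m},e^{-y/m})\to(1,1)$, not $(0,0)$, and you reach this not by a contour deformation but simply by reading the statement as the paper's proof does.

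Once that is in place, the paper's argument is a straightforward three-line local computation and none of your Tauberian/contour machinery is needed:
\begin{itemize}
\item \textbf{Numerator.} By Lemma~\ref{lem:psi_{1}(0)}\,\ref{lemma:psi_{1}(0)-0}, $\psi_1(e^{-x/m})\sim c_0(1-e^{-x/m})^{-\pi/\theta}\sim c_0(m/x)^{\pi/\theta}$, so $P_n(\psi_1(e^{-x/m}))\sim c_1(m/x)^{n\pi/\theta}$.
\item \textbf{Denominator.} Taylor expand $K(e^{-x/m},e^{-y/m})$ at $(1,1)$; the first-order terms vanish by the zero-drift assumption \ref{H4:jumps}, and the second-order terms give
\[
K(e^{-x/m},e^{-y/m})\sim -\tfrac{\sum k^2p_{k,\ell}}{2m^2}\,(x^2-2xy\cos\theta+y^2).
\]
\item \textbf{Continuous side.} Compute $\mathcal{L}h_n^\sigma(x,y)$ \emph{explicitly}: pass to the cone of opening $\theta$ via the linear map in \eqref{eq:h_n^sigma}, switch to polar coordinates, integrate $r^{n\pi/\theta+1}e^{-r(\cdots)}$ in $r$ to produce a Gamma factor, and the remaining angular integral has an explicit antiderivative. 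The answer is a constant times $\dfrac{x^{-n\pi/\theta}-(-y^{-\pi/\theta})^n}{x^2-2xy\cos\theta+y^2}$, matching the discrete side.
\end{itemize}
So your items (i)--(iii) under ``Main obstacle'' simply do not arise: there is no contour to deform, no bivariate transfer theorem to invoke, and the corner geometry enters only through the exponent in Lemma~\ref{lem:psi_{1}(0)}\,\ref{lemma:psi_{1}(0)-0}. The only genuine work is the closed-form evaluation of $\mathcal{L}h_n^\sigma$, which you did not attempt.
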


Before embarking into the proof of Proposition  \ref{prop:main_intro-1:it3}, we provide a few remarks on the construction of the function $h_n^\sigma$ as introduced in \eqref{eq:h_n^sigma}.
We first consider a Dirichlet problem on the cone ${D}=\{(r\cos t,r\sin t):r> 0, t\in(0,\theta)\}$. Recall from our introduction that the associated set of harmonic functions may be described as
\begin{equation*}
	H(D)= \bigl\{\sum_{n\geq 1} a_ng_n(x,y): a_n\in\mathbb{R}
	\text{ and }\vert a_n\vert^{1/n}\to 0 \bigr\},
\end{equation*}
with $g_n$ as above. By the linear transformation $(x,y)\mapsto (x/\sin\theta+y\cot\theta ,y)$ from the positive quadrant $\mathcal{Q}$ onto the cone $D$, one can transform the above problem into a problem on $\mathcal{Q}$ with corresponding Laplacian \eqref{eq:continuous_Laplacian_theta}. Naturally, the set of underlying continuous harmonic functions associated is described by \eqref{eq:basis_continuous_case}.


\begin{proof}[Proof of Proposition  \ref{prop:main_intro-1:it3}]
By definition of Laplace transforms and generating functions, one has
\begin{multline}
\label{comp_h_n}
    \mathcal{L}h_n(mx,my) = \frac{{e}^{-(x+y)/m}}{m} H_n({e}^{-x/m},{e}^{-y/m})\\
     = \frac{{e}^{-(x+y)/m}}{m}   \frac{P\bigl(\psi_{1}({e}^{-x/m})\bigr)-P\bigl( -\psi_{1}({e}^{-y/m}) \bigr)}{K({e}^{-x/m},{e}^{-y/m})}.
\end{multline}
Using now Lemma~\ref{lem:psi_{1}(0)}~\ref{lemma:psi_{1}(0)-0}, one deduces that as $m\to\infty$, 
\begin{align*}
	\psi_{1}({e}^{-x/m})&\sim \frac{c_0}{(1-{e}^{-x/m})^{\pi/\theta}}\sim \frac{c_0}{(x/m)^{\pi/\theta}},\\
	P\left(\psi_{1}({e}^{-\frac{x}{m}})\right) &\sim c_1\left(\left(\frac{m}{x}\right)^{\frac{2\pi}{\theta}}-p_{1,1}\right)^{\left\lfloor \frac{n}{2}\right\rfloor} \left(\frac{m}{x}\right)^{n\,[2]}\sim c_1\left(\frac{m}{x}\right)^{\frac{n\pi}{\theta}},
\end{align*}
where $c_0$ and $c_1$ are non-zero constants. 

On the other hand, observe that 
\begin{equation*}
   \sum_{k,\ell} p_{k,\ell}e^{-\bigl(u(-k+1)+v(-\ell+1)\bigr)}
\end{equation*}
is the Laplace transform $\mathcal{L}\mu(u,v)$ of the probability measure $\mu=\sum_{k,\ell}p_{k,\ell}\delta_{-k+1,-\ell+1}$ supported on $\mathbb{N}\times \mathbb{N}$. Since $\mu$ admits second moments, we have as $u$ and $v$ go to $0$ (see \cite[Ch.~XIII.2, (2.5)]{Fel-08})
\begin{equation*}
   \mathcal{L}\mu(u,v)=1-\mu(X)u-\mu(Y)v+\frac{1}{2}\cdot\bigl(\mu(X^2)u^2+2\mu(XY)uv+\mu(Y^2)v^2\bigr)+o(u^2+v^2),
\end{equation*}
 where, using that $\sum k p_{k,\ell}=\sum \ell p_{k,\ell}=0$, $\mu(X)=\sum p_{k,\ell}(-k+1)=1$ is the first moment of the first coordinate according to $\mu$, similarly $\mu(Y)=1$, and 
\begin{equation*}
   \mu(X^2)=\sum p_{k,\ell}(-k+1)^2=1+\sum p_{k,\ell}k^2,
\end{equation*}
$\mu(Y^2)=1+\sum p_{k,l}\ell^2$ and $\mu(XY)=1+\sum p_{k,\ell}k\ell$. Hence, we have as $u$ and $v$ go to $0$
\begin{multline*}
\mathcal{L}\mu(u,v)\\
=1-(u+v)+\frac{(u+v)^2}{2}+\frac{u^2\sum p_{k,\ell}k^2}{2}+uv\sum p_{k,\ell}k\ell+\frac{v^2\sum p_{k,\ell}\ell^2}{2}+o(u^2+v^2).
\end{multline*}
Thus, as $u$ and $v$ go to $0$, 
\begin{align*}
{e}^{-(u+v)}-\mathcal{L}\mu(u,v)=&-\frac{u^2\sum p_{k,\ell}k^2+2uv\sum p_{k,\ell}k\ell+v^2\sum p_{k,\ell}\ell^2}{2}+o(u^2+v^2)\\
=&\frac{\sum k^2p_{k,\ell}}{2m^2}(u^2-2uv\cos\theta +v^2)+o(u^2+v^2),
\end{align*}
where in the last equality we used $\sum p_{k,\ell}k^2=\sum p_{k,\ell}\ell^2$ and $\sum p_{k,\ell}k\ell=-\cos \theta \sum p_{k,\ell}k^2$. Hence, we have for $x,y>0$ fixed and $m$ going to infinity
\begin{align*}
	K({e}^{-x/m},{e}^{-y/m}) &= {e}^{-(x+y)/m} - \sum p_{k,\ell}{e}^{-(x(-k+1)+y(-\ell+1))/m}\\
	&={e}^{-(x+y)/m} -\mathcal{L}\mu(x/m,y/m)\\
	& = -\frac{\sum k^2p_{k,\ell}}{2m^2}(x^2-2xy\cos\theta +y^2)+o\left(\frac{1}{m^2}\right).
\end{align*}
Going back to \eqref{comp_h_n}, we have for some non-zero constant $c_2$ that as $m\to \infty$,
\begin{align*}
    \mathcal{L}h_n(mx,my) \sim c_2 m^{n\pi/\theta+1}\frac{(x^{-\pi/\theta})^n - (-y^{-\pi/\theta})^n}{x^2-2xy\cos\theta+y^2}.
\end{align*}
We now move the Laplace transform of the continuous harmonic function $h_n^\sigma$:
\begin{align*}
	\mathcal{L}h_n^\sigma(x,y) &= \int_0^\infty \int_0^\infty h_n^\sigma(u,v){e}^{-(ux+vy)}dudv\\
	& = \int_0^\infty \int_0^\infty g_n(u/\sin\theta+v\cot\theta ,v){e}^{-(ux+vy)}dudv\\
	& =  \int_0^\infty \int_{v'\cot\theta}^\infty g_n(u',v'){e}^{-(u'\sin\theta - v'\cos\theta)x-v'y}\sin\theta du'dv'\\
	& =\sin\theta \int_0^\infty\int_0^{\theta}g_n(r\cos t,r\sin t){e}^{-r(x\sin (\theta-t)+y\sin t)}rdtdr\\
	& = \sin\theta \int_0^{\theta} \sin \left(n\frac{\pi}{\theta}t\right) \int_0^\infty r^{n\frac{\pi}{\theta} +1}{e}^{-r(x\sin (\theta-t)+y\sin t)}drdt\\
	& = \Gamma(n\pi/\theta+2)\sin\theta \int_0^{\theta} \frac{\sin (n\frac{\pi}{\theta}t)}{(x\sin (\theta-t)+y\sin t)^{n\frac{\pi}{\theta} +2}}dt\\
	& =  \left.\frac{\frac{\Gamma(n\pi/\theta+2)\sin\theta}{n\pi/\theta+1}}{x^2-2xy\cos\theta+y^2} \frac{-x\sin (\theta-(n\frac{\pi}{\theta}+1)t)-y\sin((n\frac{\pi}{\theta}+1)t)}{(x\sin (\theta-t)+y\sin t)^{n\frac{\pi}{\theta} +1}}\right|_0^\theta\\
	& = \frac{\Gamma(n\pi/\theta+2)}{(n\pi/\theta+1)(\sin\theta)^{n\pi/\theta-1}}\frac{(x^{-\pi/\theta})^n - (-y^{-\pi/\theta})^n}{x^2-2xy\cos\theta+y^2}.
\end{align*}
One can compare the above results and the proof is then complete.
\end{proof}

Remark that one cannot deduce from Proposition \ref{prop:main_intro-1:it3} the asymptotics of $h_n(i,j)$ as $n$ is fixed and $i+j\to\infty$. However, classically, Proposition \ref{prop:main_intro-1:it3} entails that $h_n$ is converging locally in the $L^1$-norm towards $h_n^\sigma$.


\subsection{Proof of Theorem \ref{thm:main_intro-2}}

As a first step, we need to prove that as $n$ increases, the harmonic function $h_n$ in \eqref{eq:expression_H_n} has more and more zero coefficients, in a sense which will be made precise in Lemma~\ref{lemma:triangle_lemma} (case $p_{1,1}=0$) and Lemma~\ref{lemma:rectangle_lemma} (case $p_{1,1}\neq 0$), see also Figure~\ref{fig:zeros_h_n}. Let us remark here that Lemma~\ref{lemma:rectangle_lemma} is an a posteriori justification of our choice of the polynomials $P_n$ in the definition \eqref{eq:def_P_n}. 

\begin{lemma}
\label{lemma:triangle_lemma}
Assuming $p_{1,1}=0$, then $h_n$ satisfies the following assertions:
    \begin{itemize}
        \item For all $i,j\geq 1$ such that $i+j\leq n$, we have $h_n(i,j)=0$;
        \item For all $i,j\geq 1$ such that $i+j= n+1$, we have $h_n(i,j)\neq 0$.
    \end{itemize}
\end{lemma}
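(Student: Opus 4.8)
The plan is to analyze the structure of $H_n(x,y)$ near the origin using the analyticity of $\psi_1$ around $0$ established in Lemma~\ref{lem:psi_{1}(0)}~\ref{lemma:psi_{1}(0)-1}, together with the fact that in the case $p_{1,1}=0$ we have $P_n(t)=t^n$ and $\psi_1(0)=0$, $\psi_1'(0)\neq 0$. Write $\psi_1(x)=c_1 x+c_2 x^2+\cdots$ with $c_1\neq 0$, so that $\psi_1(x)$ has a simple zero at $0$; hence $\psi_1(x)^n$ has valuation exactly $n$ at $x=0$, and likewise $\psi_1(y)^n$ in $y$. Then
\begin{equation*}
   P_n(\psi_1(x))-P_n(-\psi_1(y))=\psi_1(x)^n-(-1)^n\psi_1(y)^n
\end{equation*}
is a power series in $(x,y)$ in which every monomial $x^iy^j$ has $i\geq n$ or $j\geq n$; in particular every monomial appearing has total degree $i+j\geq n$.

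Next I would control the kernel $K(x,y)$ near $(0,0)$. Since $p_{1,1}=0$, the lowest-degree terms of $K$ come from $p_{1,0}x$ and $p_{0,1}y$ (recall $K(x,y)=xy-\sum p_{k,\ell}x^{-k+1}y^{-\ell+1}$, and by symmetry \ref{H1:jumps} $p_{1,0}=p_{0,1}>0$), so $K(x,y)=-p_{1,0}(x+y)+(\text{higher order})$, i.e., $K$ has valuation exactly $1$ at the origin with nonzero linear part. Therefore $1/K(x,y)$, while not a power series, does not change the key fact that in
\begin{equation*}
   H_n(x,y)=\frac{\psi_1(x)^n-(-1)^n\psi_1(y)^n}{K(x,y)}
\end{equation*}
the numerator's valuation exceeds that of the denominator by at least $n-1$; more precisely, since $H_n$ \emph{is} a power series (by Theorem~\ref{thm:main_intro-1}~\ref{thm:main_intro-1:it1}), the identity $K(x,y)H_n(x,y)=\psi_1(x)^n-(-1)^n\psi_1(y)^n$ forces every monomial $x^{i-1}y^{j-1}$ of $H_n$ to satisfy: multiplying by the valuation-$1$ series $K$ lands inside the ideal of monomials with $i+j\geq n+1$. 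Comparing lowest-degree parts, I would argue that $h_n(i,j)=0$ whenever $i+j\leq n$: if not, take a monomial $h_n(i_0,j_0)x^{i_0-1}y^{j_0-1}$ of $H_n$ with $i_0+j_0$ minimal among nonzero coefficients with $i_0+j_0\le n$; then $K\cdot H_n$ has a nonzero contribution of total $x,y$-degree $i_0+j_0-1\le n-1$ (coming from the linear part of $K$ times this term), which cannot be cancelled by other terms of $H_n$ of higher total degree, contradicting that the right-hand side has all monomials of total degree $\geq n$. This handles the first bullet.

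For the second bullet, I would look one degree further. The numerator $\psi_1(x)^n-(-1)^n\psi_1(y)^n$ has homogeneous degree-$n$ part equal to $c_1^n\bigl(x^n-(-1)^ny^n\bigr)=c_1^n(x^n+(-y)^{\,?})$; concretely $c_1^n(x^n-(-1)^ny^n)$, which is nonzero and is \emph{not} divisible (as a polynomial) by the linear form $x+y$ unless... — in fact $x^n-(-1)^ny^n$ vanishes at $x=-y$ iff $(-y)^n-(-1)^ny^n=(-1)^ny^n-(-1)^ny^n=0$, so it \emph{is} divisible by $x+y$. So I must be more careful: write $x^n-(-1)^ny^n=(x+y)\,q_{n-1}(x,y)$ with $q_{n-1}$ homogeneous of degree $n-1$ and $q_{n-1}$ not identically zero on the line $x+y=0$ — indeed $q_{n-1}(x,-x)=\frac{d}{dt}\big(t^n-(-1)^ny^n\big)$-type computation gives $q_{n-1}(1,-1)=n(-1)^{n-1}\neq 0$. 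Then, matching the degree-$(n)$ part of $K\cdot H_n$ with the degree-$n$ part of the numerator, and using that the linear part of $K$ is $-p_{1,0}(x+y)$, I get that the degree-$(n-1)$ homogeneous part of $H_n$ equals $-\frac{c_1^n}{p_{1,0}}q_{n-1}(x,y)$, whose coefficients correspond exactly to $h_n(i,j)$ with $i+j=n+1$; since $q_{n-1}$ has no common factor making coefficients vanish (its coefficients are $\pm c_1^n/p_{1,0}$ times binomial-type integers coming from $\frac{x^n-(-1)^ny^n}{x+y}=\sum_{k=0}^{n-1}(-1)^k x^{n-1-k}y^{k}$), all these coefficients are nonzero. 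This is the computation I would carry out carefully.

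I expect the main obstacle to be the bookkeeping in the second bullet: one must rule out accidental cancellation in the degree-$(n-1)$ part of $H_n$ coming from the interaction of higher-order terms of $\psi_1$ with higher-order terms of $1/K$. The clean way around this is precisely the explicit factorization $\frac{x^n-(-1)^ny^n}{x+y}=\sum_{k=0}^{n-1}(-1)^{k}x^{n-1-k}y^{k}$ at leading order: since the leading contribution to $K$ is $-p_{1,0}(x+y)$, dividing the leading numerator $c_1^n(x^n-(-1)^ny^n)$ by it produces $-\tfrac{c_1^n}{p_{1,0}}\sum_{k=0}^{n-1}(-1)^k x^{n-1-k}y^k$, all of whose coefficients are nonzero, and nothing of lower degree can interfere because everything else is of strictly higher total degree. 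I would also remark that the symmetry/antisymmetry of $h_n$ (Section~\ref{sec:sym_antisym}) is consistent with the sign pattern $(-1)^k$ appearing here.
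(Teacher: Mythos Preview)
Your argument is correct in substance, but there is one imprecision in the first bullet: you pick a single monomial $h_n(i_0,j_0)x^{i_0-1}y^{j_0-1}$ of minimal total degree and claim its product with $-p_{1,0}(x+y)$ survives in $K\cdot H_n$. As written this does not exclude cancellation against \emph{other} monomials of $H_n$ of the \emph{same} minimal total degree. The fix is immediate: argue with the whole homogeneous component $Q_m$ of $H_n$ of lowest degree $m$; then the degree-$(m{+}1)$ part of $K\cdot H_n$ equals $-p_{1,0}(x+y)Q_m$, which is nonzero since $\mathbb{R}[x,y]$ is an integral domain. Your second-bullet computation is fine, and your worry about ``accidental cancellation from higher-order terms'' is already handled by the first bullet (all lower-degree parts of $H_n$ vanish), so the degree-$n$ identity $-p_{1,0}(x+y)\cdot[\text{deg }n{-}1\text{ part of }H_n]=c_1^n\bigl(x^n-(-1)^ny^n\bigr)$ holds on the nose.

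The paper's proof takes a slightly different, more structural route: it factors
\[
   H_n(x,y)=H_1(x,y)\sum_{k=0}^{n-1}\psi_1(x)^k(-\psi_1(y))^{n-1-k},
\]
using $a^n-b^n=(a-b)\sum a^kb^{n-1-k}$ with $a=\psi_1(x)$, $b=-\psi_1(y)$, and then reads off both bullets from the fact that $H_1$ is a power series with $h_1(1,1)\neq 0$ and that each summand $\psi_1(x)^k(-\psi_1(y))^{n-1-k}$ has $x,y$-valuation exactly $(k,n{-}1{-}k)$ with nonzero leading coefficient $(-1)^{n-1-k}c_1^{n-1}$. Your approach instead works directly from the relation $K\cdot H_n=\psi_1(x)^n-(-\psi_1(y))^n$ and matches leading homogeneous parts. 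The two arguments are essentially the same computation seen from different angles: the paper performs the division by $K$ (equivalently by $x+y$ at leading order) \emph{before} extracting leading terms via the factorization through $H_1$, while you extract leading terms first and then divide by $x+y$. The paper's version has the advantage that the vanishing and nonvanishing are visible simultaneously from a single product formula, with no need to discuss possible cancellations; your version is more elementary in that it does not invoke $H_1$ at all.
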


\begin{figure}
\hspace{-10mm}
\begin{tikzpicture}[scale=2.5]
\draw[gray,very thin] (-0.1,-0.1) grid (1.1,1.1)
	 [step=0.25cm] (0,0) grid (1.1,1.1);
\draw[->] (-0.1,0) -- (1.1,0);
\draw[->] (0,-0.1) -- (0,1.1);


\put(-2.6,-2.6){\textcolor{black}{$\bullet$}}
\put(15.1,-2.6){\textcolor{black}{$\bullet$}}
\put(15.1,15.1){\textcolor{blue}{$\bullet$}}
\put(32.8,-2.6){\textcolor{black}{$\bullet$}}
\put(32.8,15.1){\textcolor{blue}{$\bullet$}}
\put(15.1,32.8){\textcolor{blue}{$\bullet$}}
\put(50.5,-2.6){\textcolor{black}{$\bullet$}}
\put(-2.6,15.1){\textcolor{black}{$\bullet$}}
\put(-2.6,32.8){\textcolor{black}{$\bullet$}}
\put(-2.6,50.5){\textcolor{black}{$\bullet$}}
\put(-2.6,68.2){\textcolor{black}{$\bullet$}}
\put(68.2,-2.6){\textcolor{black}{$\bullet$}}
\put(15.1,68.2){\textcolor{red}{$\bullet$}}
\put(32.8,32.8){\textcolor{blue}{$\bullet$}}
\put(68.2,15.1){\textcolor{red}{$\bullet$}}
\put(50.5,32.8){\textcolor{red}{$\bullet$}}
\put(32.8,50.5){\textcolor{red}{$\bullet$}}
\put(50.5,15.1){\textcolor{blue}{$\bullet$}}
\put(15.1,50.5){\textcolor{blue}{$\bullet$}}
\end{tikzpicture}\qquad\qquad\qquad
\begin{tikzpicture}[scale=2.5]
\draw[gray,very thin] (-0.1,-0.1) grid (1.1,1.1)
	 [step=0.25cm] (0,0) grid (1.1,1.1);
\draw[->] (-0.1,0) -- (1.1,0);
\draw[->] (0,-0.1) -- (0,1.1);


\put(-2.6,-2.6){\textcolor{black}{$\bullet$}}
\put(15.1,-2.6){\textcolor{black}{$\bullet$}}
\put(32.8,-2.6){\textcolor{black}{$\bullet$}}
\put(50.5,-2.6){\textcolor{black}{$\bullet$}}
\put(68.2,-2.6){\textcolor{black}{$\bullet$}}
\put(-2.6,68.2){\textcolor{black}{$\bullet$}}
\put(-2.6,15.1){\textcolor{black}{$\bullet$}}
\put(15.1,15.1){\textcolor{blue}{$\bullet$}}
\put(32.8,15.1){\textcolor{blue}{$\bullet$}}
\put(50.5,15.1){\textcolor{blue}{$\bullet$}}
\put(-2.6,32.8){\textcolor{black}{$\bullet$}}
\put(15.1,32.8){\textcolor{blue}{$\bullet$}}
\put(32.8,32.8){\textcolor{blue}{$\bullet$}}
\put(50.5,32.8){\textcolor{blue}{$\bullet$}}
\put(-2.6,50.5){\textcolor{black}{$\bullet$}}
\put(15.1,50.5){\textcolor{blue}{$\bullet$}}
\put(32.8,50.5){\textcolor{blue}{$\bullet$}}
\put(50.5,50.5){\textcolor{blue}{$\bullet$}}
\put(50.5,50.5){\textcolor{blue}{$\bullet$}}
\put(68.2,15.1){\textcolor{red}{$\bullet$}}
\put(15.1,68.2){\textcolor{red}{$\bullet$}}
\end{tikzpicture}
\caption{Illustrations of Lemmas~\ref{lemma:triangle_lemma} and~\ref{lemma:rectangle_lemma}. The harmonic function $h_n$ is always zero on the axes. Left: in the case $p_{1,1}=0$, the coefficients of $h_n$ are zero at the blue points of the triangle, and non-zero just above (the red points). Right: in the case $p_{1,1}\neq 0$, the coefficients of $h_n$ are zero at all blue points of a square, non-zero at the extremal red points.}
\label{fig:zeros_h_n}
\end{figure}
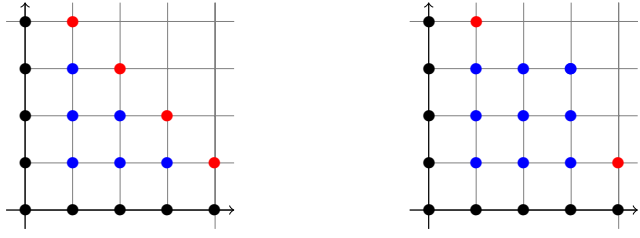

\begin{proof}
As in \eqref{eq:expression_H_n}, rewrite $H_n$ as 
\begin{align*}
    H_n(x,y) = \frac{\psi_{1}(x)^n-(-\psi_{1}(y))^n}{K(x,y)} &= \frac{\psi_{1}(x)+\psi_{1}(y)}{K(x,y)}\sum_{k=0}^{n-1}\psi_{1}(x)^k(-\psi_{1}(y))^{n-1-k}\\
    &=H_{1}(x,y)\sum_{k=0}^{n-1}\psi_{1}(x)^k(-\psi_{1}(y))^{n-1-k}.
\end{align*}
Recall from Lemma \ref{lem:psi_{1}(0)} that $\psi_{1}$ is analytic at $0$, with $\psi_{1}(0)=0$ and $\psi_{1}'(0)\not=0$. Hence, we may write
\begin{equation*}
   \psi_{1}(x)=\sum_{i\geq 1}c_{i}x^i,
\end{equation*}
with $c_1\not=0$. We then obtain, using the bivariate expansion of $H_1$, that
    \begin{equation}
    \label{eq:expansion_H_n_vanishing}
        H_n(x,y) =  \sum_{i,j\geq 1} h_1(i,j)x^{i-1}y^{j-1}\sum_{k=0}^{n-1}\left(\sum_{i\geq 1}c_i x^i\right)^k\left(-\sum_{j\geq 1}c_j y^j\right)^{n-1-k}.
    \end{equation}
By L'Hospital's rule, $\frac{\psi_{1}(x)}{K(x,0)}\rightarrow \frac{c_1}{p_{1,0}}\not =0$. Hence, $h_1(1,1)\neq 0$ and we deduce from \eqref{eq:expansion_H_n_vanishing} the proof of Lemma \ref{lemma:triangle_lemma}.
\end{proof}

\begin{lemma}
\label{lemma:rectangle_lemma}
If $p_{1,1}\neq 0$, then $h_{1}(1,1)\not=0$ and for $k\geq 1$ and $\epsilon\in\{0,1\}$, the harmonic function $h_{2k+\epsilon}$  satisfies $h_{2k+\epsilon}(i,j)=0$ for all $1\leq i,j\leq  k$. Moreover, the following matrix is invertible:
\begin{equation*}
   T_k=\begin{pmatrix}
h_{2k}(k+1,1) &h_{2k+1}(k+1,1)\\h_{2k}(1,k+1)&h_{2k+1}(1,k+1)
\end{pmatrix}.
\end{equation*}
\end{lemma}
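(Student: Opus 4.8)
The plan is to mirror the structure of the proof of Lemma~\ref{lemma:triangle_lemma}, but now keeping track of the fact that $\psi_1(0)=p_{1,1}\ne 0$. The first step is to record the low-order behaviour of $\psi_1$ near $0$: by Lemma~\ref{lem:psi_{1}(0)}~\ref{lemma:psi_{1}(0)-2} we may write $\psi_1(x)=p_{1,1}+c_1 x+O(x^2)$, and I would first check that $c_1\ne 0$ (this follows since $\psi_1=\phi\circ\pi_1$ with $\pi_1$ analytic near $0$ and $\pi_1'(0)\ne 0$ by Lemma~\ref{lemma:psi_{1}(0)-1}'s argument, and $\phi'$ is nonvanishing). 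The key algebraic observation is then that $P_{2k+\epsilon}(\psi_1(x))=(\psi_1(x)^2-p_{1,1}^2)^k\psi_1(x)^\epsilon$, and $\psi_1(x)^2-p_{1,1}^2=(\psi_1(x)-p_{1,1})(\psi_1(x)+p_{1,1})$ has a simple zero at $x=0$ (namely it equals $2p_{1,1}c_1 x+O(x^2)$). Hence $P_{2k+\epsilon}(\psi_1(x))=O(x^k)$ as $x\to 0$, with the coefficient of $x^k$ nonzero, and similarly $P_{2k+\epsilon}(-\psi_1(y))=O(y^k)$ with nonzero leading coefficient.

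Next I would analyse $H_{2k+\epsilon}(x,y)=\dfrac{P_{2k+\epsilon}(\psi_1(x))-P_{2k+\epsilon}(-\psi_1(y))}{K(x,y)}$. Since $p_{1,1}\ne 0$ we have $K(0,0)=-p_{1,1}\ne 0$, so $1/K(x,y)$ is analytic near $(0,0)$ with nonzero value $-1/p_{1,1}$ there. From the previous step the numerator is a power series in which every monomial $x^iy^j$ with $i\le k-1$ or $j\le k-1$ coming from $P_{2k+\epsilon}(\psi_1(x))$ has been killed — more precisely $P_{2k+\epsilon}(\psi_1(x))=\sum_{i\ge k}\alpha_i x^i$ with $\alpha_k\ne 0$, and likewise $P_{2k+\epsilon}(-\psi_1(y))=\sum_{j\ge k}\beta_j y^j$ with $\beta_k\ne 0$. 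So the numerator, as a bivariate series, is supported on $\{x^i:i\ge k\}\cup\{y^j:j\ge k\}$ (no mixed terms at all in the numerator!). Multiplying by the analytic series $1/K(x,y)=\sum_{r,s\ge 0}\kappa_{r,s}x^ry^s$ with $\kappa_{0,0}\ne 0$, the coefficient of $x^{i-1}y^{j-1}$ in $H_{2k+\epsilon}$, i.e.\ $h_{2k+\epsilon}(i,j)$, is a sum of terms $\alpha_a\kappa_{i-1-a,j-1}$ (with $a\ge k$) and $\beta_b\kappa_{i-1,j-1-b}$ (with $b\ge k$); if $1\le i,j\le k$ then $i-1-a<0$ and $j-1-b<0$ for all admissible $a,b$, so every term vanishes and $h_{2k+\epsilon}(i,j)=0$. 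This gives the vanishing claim.

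For the invertibility of $T_k$ I would compute the two relevant extremal coefficients explicitly. Take $i=k+1$, $j=1$: then the only surviving contribution is $a=k$ paired with $\kappa_{0,0}$, giving $h_{2k+\epsilon}(k+1,1)=\alpha_k^{(\epsilon)}\kappa_{0,0}$ where $\alpha_k^{(\epsilon)}$ is the coefficient of $x^k$ in $P_{2k+\epsilon}(\psi_1(x))$; by symmetry $h_{2k+\epsilon}(1,k+1)=\beta_k^{(\epsilon)}\kappa_{0,0}$ with $\beta_k^{(\epsilon)}=(-1)^k\alpha_k^{(\epsilon)}$ (replacing $\psi_1$ by $-\psi_1$ changes $(\psi_1^2-p_{1,1}^2)^k\psi_1^\epsilon$ by $(-1)^\epsilon$, so actually $\beta_k^{(\epsilon)}=(-1)^\epsilon\alpha_k^{(\epsilon)}$). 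Thus $T_k=\kappa_{0,0}\begin{pmatrix}\alpha_k^{(0)}&\alpha_k^{(1)}\\ \alpha_k^{(0)}&-\alpha_k^{(1)}\end{pmatrix}$, whose determinant is $-2\kappa_{0,0}^2\alpha_k^{(0)}\alpha_k^{(1)}$. So invertibility reduces to $\alpha_k^{(0)}\ne 0$ and $\alpha_k^{(1)}\ne 0$, which is exactly the statement that $(\psi_1(x)^2-p_{1,1}^2)^k$ and $(\psi_1(x)^2-p_{1,1}^2)^k\psi_1(x)$ have a zero of order exactly $k$ at $0$; this holds because $\psi_1(x)^2-p_{1,1}^2$ has a simple zero (coefficient $2p_{1,1}c_1\ne 0$) and $\psi_1(0)=p_{1,1}\ne 0$. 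The case $h_1(1,1)\ne 0$ is the same computation with $k=0,\epsilon=1$: $h_1(1,1)=\psi_1(0)\cdot(-1/p_{1,1})=-1$, hence nonzero. The main obstacle, such as it is, is bookkeeping: making sure that the numerator of $H_{2k+\epsilon}$ genuinely has \emph{no} mixed monomials (which is what makes the square of zeros appear rather than just a triangle) and that the leading coefficients $\alpha_k^{(\epsilon)}$ are correctly identified; everything else is routine once $c_1\ne 0$ is in hand.
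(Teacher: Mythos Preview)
Your proposal is correct and follows essentially the same approach as the paper's proof: expand $\psi_1(x)=p_{1,1}+c_1x+\cdots$ with $c_1\neq 0$ (because $\psi_1$ is conformal and $0\in\mathcal S_1^+$), observe that the numerator $P_{2k+\epsilon}(\psi_1(x))-P_{2k+\epsilon}(-\psi_1(y))$ is a sum of a series in $x$ alone and a series in $y$ alone, each starting at degree $k$, and multiply by the analytic series $1/K(x,y)$. One bookkeeping slip to fix: the numerator is $A(x)\,\mathbf{-}\,B(y)$, so $h_{2k+\epsilon}(1,k+1)=-\beta_k^{(\epsilon)}\kappa_{0,0}$ rather than $+\beta_k^{(\epsilon)}\kappa_{0,0}$; this flips the signs in the second row of your $T_k$ (the paper gets $\kappa_{0,0}\begin{psmallmatrix}\alpha_k^{(0)}&\alpha_k^{(1)}\\-\alpha_k^{(0)}&\alpha_k^{(1)}\end{psmallmatrix}$) and makes $h_1(1,1)=-2$ rather than $-1$, but of course the determinant is still $\pm 2\kappa_{0,0}^2\alpha_k^{(0)}\alpha_k^{(1)}\neq 0$ and the conclusions are unaffected.
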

 
\begin{proof}
First, the proof that $h_{1}(1,1)\not=0$ is similar to the end of the Lemma~\ref{lemma:triangle_lemma}. Let $k\geq 1$ and $\epsilon\in\{0,1\}$. Recall from Theorem~\ref{thm:main_intro-1}~\ref{thm:main_intro-1:it1} that $h_{2k+\epsilon}$ admits the generating function
\begin{equation*}
   H_{2k+\epsilon}(x,y)=\frac{P_{2k+\epsilon}\bigl(\psi_{1}(x)\bigr)-P_{2k+\epsilon}\bigl(-\psi_{1}(y)\bigr)}{K(x,y)},
\end{equation*}
with $P_{2k+\epsilon}(t)=t^\epsilon(t^2-p_{1,1}^2)^k=t^\epsilon(t^2-\psi_{1}(0)^2)^k$, see \eqref{eq:def_P_n}. Hence we have
    \begin{multline*}
        H_{2k+\epsilon}(x,y)
        =\frac{1}{K(x,y)}\Bigl(\psi_{1}(x)^\epsilon\bigl(\psi_{1}(x)-{\psi_{1}(0)}\bigr)^k\bigl(\psi_{1}(x)+{\psi_{1}(0)}\bigr)^k \\
        - \bigl(-\psi_{1}(y)\bigr)^\epsilon\bigl(-\psi_{1}(y)-{\psi_{1}(0)}\bigr)^k\bigl(-\psi_{1}(y)+\psi_{1}(0)\bigr)^k \Bigr).
    \end{multline*}
Since $K(0,0)\neq 0$, $\frac{1}{K(x,y)}$ is analytic at $(0,0)$. Moreover, since $\frac{1}{K(x,y)}$ satisfies the functional equation \eqref{eq:functional_equation}, then it is also a generating function of a harmonic function $h_0(i,j)$, i.e.,
\begin{equation*}
    \frac{1}{K(x,y)} = \sum_{i,j\geq 1} h_0(i,j)x^{i-1}y^{j-1}.
\end{equation*}
Since $\psi_{1}$ is analytic at $0$, which is in the interior of $\mathcal{S}_1^+$, then it has an expansion
\begin{equation*}
    \psi_{1}(x) = \psi_{1}(0) + \sum_{i\geq 1}c_i x^i,
\end{equation*}
where $c_1\neq 0$ since $\psi_{1}$ is a conformal map. Therefore,
\begin{multline*}
      H_{2k+\epsilon}(x,y) =  \left(\sum_{i,j\geq 1} h_0(i,j)x^{i-1}y^{j-1}\right)\times
         \\
\times\left[\left(\sum_{i\geq 1}c_i x^i\right)^k\left(\psi_{1}(0)+\sum_{i\geq 1}c_ix^i\right)^\epsilon\left(2{\psi_{1}(0)} + \sum_{i\geq 1}c_i x^i\right)^k\right.\\
-\left(-\sum_{j\geq 1}c_j y^j\right)^k\left.\left(-\psi_{1}(0)-\sum_{j\geq 1}c_jy^j\right)^\epsilon\left(-2{\psi_{1}(0)} - \sum_{j\geq 1}c_j y^j\right)^k\right].
    \end{multline*}
Since $h_0(1,1)\neq 0$ and $c_1\neq 0$, the bivariate power series expansion of the function above has zero coefficients for all monomials $x^iy^j$ with $i, j\leq k-1$, while the coefficient of the monomial $x^{k}$ is 
\begin{equation*}
   h_{2k+\epsilon}(k,1)=(2c_1)^kh_{0}(1,1)\psi_{1}(0)^{k+\epsilon}=(2c_1)^kh_{0}(1,1)p_{1,1}^{k+\epsilon}
\end{equation*}
and that of $y^{k}$ is 
\begin{equation*}
   h_{2k+\epsilon}(1,k)=(-1)^{\epsilon-1}(2c_1)^kh_{0}(1,1)p_{1,1}^{k+\epsilon}.
\end{equation*}
Hence,
\begin{equation*}
   \begin{pmatrix}
h_{2k}(k+1,1) &h_{2k+1}(k+1,1)\\h_{2k}(1,k+1)&h_{2k+1}(1,k+1)
\end{pmatrix}=(2c_1)^kh_{0}(1,1)p_{1,1}^{k}\begin{pmatrix}
\phantom{-}1& p_{1,1}\\-1&p_{1,1}
\end{pmatrix}.
\end{equation*}
The determinant of the latter matrix is $2^{k+1}c_1^kh_{0}(1,1)p_{1,1}^{k+1}\not=0$, which  implies the second statement of Lemma \ref{lemma:rectangle_lemma}.
\end{proof}

\begin{remark}
As the proof of Lemma \ref{lemma:rectangle_lemma} shows, choosing instead of $P_n$ in \eqref{eq:def_P_n} the family of polynomials
\begin{equation*}
   P_{m,n}=(X-p_{1,1})^m(X+p_{1,1})^n
\end{equation*}
for $m,n\geq 1$, would yield harmonic functions vanishing on the rectangle $\{(i,j):1\leq i\leq m, 1\leq j\leq n\}$. In particular, there is no uniqueness in the choice of $P_n$.
\end{remark}

\begin{lemma}
\label{lemma:consequence_triangle_lemma}
Given $p_{1,1}=0$ and a sequence $\{c_n\}_{n\geq1}$, there exists a unique sequence $\{a_n\}_{n\geq1}$ such that the (harmonic) function $\sum_{n\geq1} a_{n}h_n$ satisfies
\begin{equation}
\label{cond:consequence_triangle_lemma}
    \sum_{n\geq1} a_{n}h_{n}(i,1)=c_i,\quad \forall i\geq 1.
\end{equation}
Moreover, $\{a_n\}_{n\geq1}$ can be deduced from the (infinite) linear system of equations
\begin{equation*}
    M\cdot a = c,
\end{equation*}
where $a=(a_1,a_2,a_3,\ldots)^\top$, $c=(c_1,c_2,c_3,\ldots)^\top$ and $M$ is an infinite non-singular lower triangular matrix.
\end{lemma}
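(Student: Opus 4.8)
## Proof plan for Lemma \ref{lemma:consequence_triangle_lemma}

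The plan is to read off the statement directly from the vanishing pattern established in Lemma \ref{lemma:triangle_lemma}. The key observation is that, restricted to the first section $i \mapsto h_n(i,1)$, the family $\{h_n\}_{n\geq 1}$ is ``staircase-shaped'': by Lemma \ref{lemma:triangle_lemma} (applied with $j=1$) one has $h_n(i,1)=0$ for all $i$ with $i+1\leq n$, i.e.\ for $i\leq n-1$, while $h_n(n,1)\neq 0$. Hence if we let $M$ be the infinite matrix with entries $M_{i,n}=h_n(i,1)$ for $i,n\geq 1$, then $M_{i,n}=0$ whenever $n>i$ and $M_{i,i}=h_i(i,1)\neq 0$: that is, $M$ is lower triangular with non-zero diagonal entries. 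The condition \eqref{cond:consequence_triangle_lemma} is exactly the linear system $M\cdot a = c$.

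The substantive point to check is that this infinite triangular system has a unique solution, and that the formal series $\sum_{n\geq 1}a_n h_n$ it produces makes sense as a harmonic function. For the first part, I would argue by forward substitution: the $i$-th equation reads $\sum_{n=1}^{i} M_{i,n}a_n = c_i$, which determines $a_i$ uniquely in terms of $a_1,\dots,a_{i-1}$ and $c_i$ once those are known, since $M_{i,i}\neq 0$. By induction this produces a unique sequence $\{a_n\}_{n\geq 1}$, and $M$ being lower triangular with non-vanishing diagonal is precisely the non-singularity statement (it has a well-defined two-sided inverse on sequences, computed by the same substitution scheme). For the second part, one invokes Theorem \ref{thm:main_intro-2}: the map $\Phi$ sends the formal power series $\sum_{n\geq 1}a_n t^n\in\mathbb{R}_0[[t]]$ to the harmonic function $\sum_{n\geq 1}a_n h_n$, so any sequence $\{a_n\}_{n\geq 1}$ whatsoever yields a genuine discrete harmonic function; in particular no convergence hypothesis on $\{a_n\}$ is needed. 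Alternatively, and more self-containedly at this point in the paper, one notes that for each fixed pair $(i,j)$ the sum $\sum_{n\geq 1}a_n h_n(i,j)$ has only finitely many non-zero terms (those with $n\leq i+j-1$, again by Lemma \ref{lemma:triangle_lemma}), so the series defines a function on $\mathbb{N}^2$ termwise without any analytic issue, and harmonicity is inherited from each $h_n$ since the Laplacian \eqref{eq:def_Laplacian} is a local (finite) operation compatible with these locally finite sums.

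The main (and only real) obstacle is the bookkeeping: one must make sure that Lemma \ref{lemma:triangle_lemma} is being applied on the correct diagonal so that the shift in indices is right --- $h_n$ vanishes up to and including the anti-diagonal $i+j=n$ and is non-zero on $i+j=n+1$, so on the line $j=1$ it first becomes non-zero at $i=n$, giving $M_{n,n}\neq 0$ and strict lower-triangularity of $M$. Once this indexing is pinned down, the remainder is the elementary observation that a lower-triangular matrix with non-zero diagonal entries is invertible over sequences, plus the remark (justified either via Theorem \ref{thm:main_intro-2} or via local finiteness of the sums) that the resulting formal combination is a bona fide harmonic function. I would therefore present the proof in three short steps: (1) define $M$ and identify its triangular structure from Lemma \ref{lemma:triangle_lemma}; (2) solve $M\cdot a=c$ by forward substitution and record uniqueness; (3) observe that $\sum a_n h_n$ is harmonic, either citing $\Phi$ or by the locally-finite-sum argument.
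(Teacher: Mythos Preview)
Your proposal is correct and follows essentially the same approach as the paper: define the matrix $M_{i,n}=h_n(i,1)$, read off lower-triangularity and non-vanishing diagonal from Lemma~\ref{lemma:triangle_lemma}, and solve by forward substitution. Your additional care about the well-definedness of $\sum a_n h_n$ is appropriate; note that invoking Theorem~\ref{thm:main_intro-2} here would be circular (this lemma feeds into its proof), so your locally-finite-sum alternative is the right justification to keep.
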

By construction, the generating function of the harmonic function given in Lemma~\ref{lemma:consequence_triangle_lemma} is the function 
\begin{equation*}
   H(x,y)=\frac{\sum_{n\geq1} a_{n}\psi_{1}(x)^n-\sum_{n\geq1} a_{n}(-\psi_{1}(y))^n}{K(x,y)}=\frac{F(\psi_{1}(x))-F(-\psi_{1}(y))}{K(x,y)},
\end{equation*}
with $F(t)=\sum_{n\geq1} a_{n}t^n$, in accordance with \eqref{eq:main-intro-2}.

\begin{proof}
By Lemma \ref{lemma:triangle_lemma}, $h_{n}$ vanishes on $(i,1)$ for $1\leq i\leq n-1$ and $h_{n}(n,1)\not=0$. Hence, the infinite matrix 
\begin{equation*}
   L=\bigl(h_{j}(i,1)\bigr)_{1\leq i,j\leq \infty}
\end{equation*}
is lower triangular, with non-zero diagonal coefficients. Hence, $L$ is invertible, and for any vector $c=(c_1,c_2,c_3,\ldots)^\top$, there exists a unique vector $a=(a_1,a_2,a_3,\ldots)^\top$ such that
    \begin{equation*}
    \begin{pmatrix}
        h_{1}(1,1) & 0 & 0 & \ldots  & 0&\ldots\\
        h_{1}(2,1) & h_{2}(2,1) & 0 &  \ldots  & 0&\ldots\\
         \vdots &  \vdots  &  \ddots  &  \ddots  &  \ddots &\ddots\\
        h_{1}(n,1) & h_{2}(n,1) &  \ldots  &  \ldots  & h_{n}(n,1)&\ddots\\
        \vdots&\vdots&\ddots&\ddots&\ddots&\ddots
    \end{pmatrix}
    \begin{pmatrix}
        a_1\\a_2\\ \vdots \\a_n\\\vdots
    \end{pmatrix}
    =
    \begin{pmatrix}
        c_1\\c_2\\ \vdots \\c_n\\\vdots
    \end{pmatrix}.
\end{equation*}
Hence, given $\{c_n\}_{n\geq1}$, there exists a unique sequence $\{a_n\}_{n\geq1}$ such that \eqref{cond:consequence_triangle_lemma} holds.
\end{proof}

\begin{lemma}
\label{lemma:consequence_rectangle_lemma}
Given $p_{1,1}\neq 0$ and two infinite sequences $\{c_n\}_{n\geq1}$ and $\{d_n\}_{n\geq2}$, there exists a unique sequence $\{a_n\}_{n\geq1}$ such that \eqref{cond:consequence_triangle_lemma} holds, as well as
\begin{equation}
\label{cond:consequence_rectangle_lemma}
    \sum_{n\geq1} a_{n}h_{n}(1,i)=d_i, \quad \forall i\geq 2.
\end{equation}
Moreover, $\{a_n\}_{n\geq1}$ can be deduced from the linear system
\begin{equation*}
   M\cdot a= b,
\end{equation*}
where $a=(a_1,a_2,a_3,\ldots)^\top$, $b=(c_1,c_2,d_2,c_3,d_3,\ldots)^\top$ and $M$ is an infinite block lower triangular matrix with invertible blocks of size $1$ or $2$ on the diagonal.
\end{lemma}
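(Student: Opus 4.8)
The plan is to mimic the proof of Lemma~\ref{lemma:consequence_triangle_lemma}, but now accounting for the fact that in the case $p_{1,1}\neq 0$ the functions $h_n$ do \emph{not} vanish on a triangle but only on a square (Lemma~\ref{lemma:rectangle_lemma}), so the natural substitute for a lower triangular matrix is a \emph{block} lower triangular matrix whose diagonal blocks have size $1$ or $2$. First I would fix the ordering of unknowns and data: the unknowns are $a=(a_1,a_2,a_3,\ldots)^\top$, and the data vector $b=(c_1,c_2,d_2,c_3,d_3,c_4,d_4,\ldots)^\top$ interleaves the two prescribed boundary sequences, so that $b$ is indexed so its first entry corresponds to $n=1$, its next two entries to $n=2$ (the values $h_n(2,1)$ and $h_n(1,2)$), the next two to $n=3$, and so on. Correspondingly I group the columns of the system in blocks $\{1\},\{2,3\},\{4,5\},\ldots$ (i.e.\ $a_1$ alone, then the pairs $(a_{2k},a_{2k+1})$ for $k\geq 1$).

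Next I would describe the matrix $M$ whose rows encode the constraints \eqref{cond:consequence_triangle_lemma} and \eqref{cond:consequence_rectangle_lemma}. The first row is $h_1(1,1)$ times $a_1$ plus contributions of all later $h_n(1,1)$; but by Lemma~\ref{lemma:rectangle_lemma}, $h_n(1,1)=0$ for all $n\geq 2$ (taking $k\geq 1$, $1\leq i,j\leq k$ includes $(1,1)$), and $h_1(1,1)\neq 0$, so the first diagonal block is the $1\times 1$ invertible matrix $(h_1(1,1))$, and the first row has no entries to the right of it — the block structure is already visible here. For $k\geq 1$, the two rows indexed by $(c_{k+1},d_{k+1})$ express $\sum_n a_n h_n(k+1,1)=c_{k+1}$ and $\sum_n a_n h_n(1,k+1)=d_{k+1}$. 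By Lemma~\ref{lemma:rectangle_lemma}, $h_n(k+1,1)$ and $h_n(1,k+1)$ vanish whenever $n=2k'+\epsilon$ with $k'\geq k+1$, i.e.\ whenever $n\geq 2k+2$; equivalently, only the columns $a_1,a_2,\ldots,a_{2k+1}$ contribute, so these two rows have no entries strictly to the right of the $k$-th pair-block. Hence $M$ is block lower triangular with respect to the chosen partition. The $k$-th diagonal $2\times 2$ block (for $k\geq 1$) is exactly the matrix $T_k$ of Lemma~\ref{lemma:rectangle_lemma},
\begin{equation*}
   T_k=\begin{pmatrix}
h_{2k}(k+1,1) &h_{2k+1}(k+1,1)\\ h_{2k}(1,k+1)&h_{2k+1}(1,k+1)
\end{pmatrix},
\end{equation*}
which Lemma~\ref{lemma:rectangle_lemma} asserts is invertible.

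From here the existence and uniqueness of $\{a_n\}_{n\geq1}$ follows by the standard forward-substitution argument for block triangular systems: solve the first $1\times 1$ block for $a_1$, then inductively, having determined $a_1,\ldots,a_{2k-1}$, move their known contributions to the right-hand side of the two equations indexed by $(c_{k+1},d_{k+1})$ and solve the resulting $2\times 2$ system with matrix $T_k$ for $(a_{2k},a_{2k+1})$. Invertibility of every diagonal block guarantees that each step has a unique solution, and since the off-diagonal part is strictly lower triangular (finitely many nonzero entries in each row), no convergence issue arises — the construction is purely algebraic, entry by entry, exactly as in Lemma~\ref{lemma:consequence_triangle_lemma}. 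The harmonicity of $\sum_{n\geq1}a_n h_n$ is immediate from Theorem~\ref{thm:main_intro-1}~\ref{thm:main_intro-1:it1}, since each $h_n$ is harmonic and the relation is linear; here one only needs that the formal sum makes sense coefficient-wise, which holds because for each fixed $(i,j)$ only finitely many $h_n(i,j)$ are nonzero by Lemma~\ref{lemma:rectangle_lemma}.

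The only genuine content beyond bookkeeping is the identification of the vanishing pattern and of the diagonal blocks, and this is precisely what Lemma~\ref{lemma:rectangle_lemma} supplies; so I do not expect a real obstacle. The one point requiring a little care is checking that the interleaved ordering of $b$ and the block partition $\{1\},\{2,3\},\{4,5\},\ldots$ are genuinely compatible, i.e.\ that the row indexed by $c_{k+1}$ (resp.\ $d_{k+1}$) really has its last nonzero entry in column $2k+1$ and not further right — this is exactly the statement "$h_n(k+1,1)=0$ for $n\geq 2k+2$", which is Lemma~\ref{lemma:rectangle_lemma} applied with its $k$ replaced by $k+1$. Once that alignment is verified, the proof is a one-paragraph repetition of the forward-substitution argument.
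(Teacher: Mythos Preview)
Your proposal is correct and takes essentially the same approach as the paper: both arguments order the constraints as $c_1,(c_2,d_2),(c_3,d_3),\ldots$, group the unknowns into blocks $\{1\},\{2,3\},\{4,5\},\ldots$, use Lemma~\ref{lemma:rectangle_lemma} to see that the resulting matrix is block lower triangular with diagonal blocks $(h_1(1,1))$ and $T_k$, and conclude by forward substitution. The paper packages the bookkeeping slightly more compactly via the coordinate-switching involution $\tau$ on $(\mathbb{N}\times\{1\})\cup(\{1\}\times\mathbb{N})$, writing $M=\bigl(h_{j}(\tau^{i\,[2]}(\lfloor i/2\rfloor+1,1))\bigr)_{i,j\geq 1}$, but the content is identical to what you wrote.
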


\begin{proof}
Let $\tau$ be the involution of $(\mathbb{N}\times \{1\})\cup (\{1\}\times \mathbb{N})$ switching coordinates. The only fixed point of $\tau$ is $(1,1)$. By Lemma \ref{lemma:rectangle_lemma}, for fixed values of $k\geq 1$ and $\epsilon\in\{ 0,1\}$, $h_{2k+\epsilon}$ vanishes at $(i,1)$ and $(1,i)$ for all $i\leq k-1$, and the matrix
\begin{equation*}
   T_k=\bigl(h_{2k+j-1}(\tau^{i-1}(k+1,1))\bigr)_{1\leq i,j\leq 2}
\end{equation*}
is invertible. Likewise, $h_{1}(1,1)\not=0$. Hence, 
\begin{equation*}
   M=\bigl(h_{j}(\tau^{i\,[2]}(\lfloor i/2\rfloor+1,1))\bigr)_{1\leq i,j\leq\infty}=\begin{pmatrix}
        h_{1}(1,1) &\begin{array}{cc} 0&0\end{array} & 0 & \ldots  \\
        \begin{array}{c}
         h_1(2,1)  \\
         h_1(1,2) 
        \end{array} & T_1 & 0 &  \ldots \\
         \vdots &  \vdots  & T_2  &  \ldots \\
        \vdots&\vdots&\vdots&\ddots
    \end{pmatrix},
\end{equation*}
with each $T_i$ invertible. Thus $M$ is invertible and for any vector $b=(c_1,c_2,d_2,c_3,d_3,\ldots)^\top$, there exists a unique vector $a=(a_1,a_2,a_3,\ldots)^\top$ such that $M\cdot a=b$. For such a vector $a$, we thus have conditions \eqref {cond:consequence_triangle_lemma} and \eqref{cond:consequence_rectangle_lemma} satisfied.
\end{proof}

Putting all the latter lemmas together yields the proof of Theorem \ref{thm:main_intro-2}, as follows:
\begin{proof}[Proof of Theorem \ref{thm:main_intro-2}]
Let $\Phi$ be the function as in the statement of Theorem \ref{thm:main_intro-2}. First notice that by Lemma \ref{lemma:triangle_lemma} when $p_{1,1}=0$ and Lemma \ref{lemma:rectangle_lemma} for $p_{1,1}\not=0$, the map $\Phi$ is well defined, since for any sequence $\{a_{n}\}_{n\geq 1}$ and all $i,j\geq 1$, $\sum_{n\geq 1} a_{n}h_{n}(i,j)$ is a finite sum and the harmonicity of $\sum a_n h_n$ is directly inherited by the harmonicity of each $h_n$. 

Suppose first that $p_{1,1}>0$. By Lemma \ref{lemma:consequence_rectangle_lemma}, $\Phi$ is injective, and for any pair of formal power series $F,G$ with $F(0)=G(0)$, there exists a sequence $\{a_{n}\}_{n\geq 1}$ such that 
\begin{equation*}
   F(x)=\sum_{i\geq 1}\sum_{n\geq 1}a_nh_n(i,1)x^{i-1}\quad \text{and}\quad G(y)=\sum_{j\geq 1}\sum_{n\geq 1}a_nh_n(1,j)y^{j-1}.
\end{equation*}
Since any harmonic function $h$ is uniquely determined by its sectional generating functions \begin{equation}
\label{eq:def_F-and-G}
   F(x)=\sum_{i\geq 1}h(i,1)x^{i-1} \quad \text{and}\quad G(y)=\sum_{j\geq 1}h(1,j)y^{j-1}
\end{equation}
through \eqref{eq:functional_equation}, this shows the surjectivity of $\Phi$.

The proof is more involved in the case $p_{1,1}=0$. By Lemma \ref{lemma:consequence_triangle_lemma}, $\Phi$ is injective. It remains to show that $\Phi$ is surjective. Let $h$ be a harmonic function and set $F$ as in \eqref{eq:def_F-and-G}. Then, by Lemma~\ref{lemma:consequence_triangle_lemma}, there exists $\{a_n\}_{n\geq 1}$ such that $\sum a_nh_{n}(i,1)=h(i,1)$ for all $i\geq 1$. Hence, the formal power series $F(x)$ and
\begin{equation*}
   \widetilde{F}(x)=K(x,0)\sum_{i\geq 1}\left(\sum_{n\geq1}a_nh_n(i,1)\right)x^{i-1}
\end{equation*}
coincide. Let $H$ be the generating series of $h$ and $\widetilde{H}$ the generating series of $\sum a_nh_{n}$. Then we have 
\begin{equation*}
   H(x,y)=\frac{F(x)+G(y)}{K(x,y)}\quad \text{and} \quad \widetilde{H}(x,y)=\frac{\widetilde{F}(x)+\widetilde{G}(y)}{K(x,y)}=\frac{F(x)+\widetilde{G}(y)}{K(x,y)},
\end{equation*}
with $G, \widetilde{G}\in \mathbb{R}[[t]]$. Since $K(0,0)=0$ and $\frac{\partial}{\partial x}K(0,0)\not=0$, by the Weierstrass preparation theorem (see Appendix~\ref{sec:app_W}) applied to $K(x,y)$ with respect to $x$ in the ring of formal power series $\mathbb{R}[[x,y]]$, one can write  
\begin{equation*}
   K(x,y)=u(x,y)(x-v(y)),
\end{equation*}
with $u$ invertible in $\mathbb{R}[[x,y]]$ and $v\in \mathbb{R}[[y]]$ satisfying $v(0)=0$. In particular, $F(x)+G(y)$ and $F(x)+\widetilde{G}(y)$ are both divisible by $(x-v(y))$ in $\mathbb{R}[[x,y]]$.

By the Weierstrass division theorem (see Appendix~\ref{sec:app_W}) applied to the division of $F(x)$ by the Weierstrass polynomial of degree one $x-v(y)$  in $\mathbb{R}[[x,y]]$, there is a unique Weierstrass polynomial $G$ of degree zero in $\mathbb{R}[[x,y]]$ (namely, $G\in \mathbb{R}[[y]]$ with $G(0)=0$) such that $G(y)+F(x)$ is divisible by $x-v(y)$. Hence, $G(y)=\widetilde{G}(y)$ and $H=\widetilde{H}$. This shows the surjectivity of $\Phi$.
\end{proof}

\subsection{Symmetric and anti-symmetric harmonic functions}
\label{sec:sym_antisym}

\begin{proposition}\label{proposition: symmetric, anti-symmetric harmonic function}
The harmonic function $h(i,j)$ is symmetric (resp.\ anti-symmetric) if and only if its characterizing series $F(t)=\Phi^{-1}(h)$ is odd (resp.\ even).
\end{proposition}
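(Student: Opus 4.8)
I want to show that symmetry of $h$ corresponds to $F=\Phi^{-1}(h)$ being odd and anti-symmetry to $F$ being even. The natural strategy is to exploit the symmetry $(i,j)\leftrightarrow(j,i)$ at the level of generating functions and the conformal mappings, then invoke the uniqueness part of Theorem~\ref{thm:main_intro-2}. Write $\widehat h(i,j)=h(j,i)$; since the step set is symmetric (hypothesis \ref{H1:jumps}) and $K(x,y)=K(y,x)$, if $h$ is harmonic then so is $\widehat h$, and its generating function is $\widehat H(x,y)=H(y,x)$. So the map $h\mapsto\widehat h$ is a linear involution on $H(\mathbb N^2)$ whose $+1$-eigenspace is the symmetric harmonic functions and whose $-1$-eigenspace is the anti-symmetric ones. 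The whole statement reduces to computing the conjugate of this involution under $\Phi$, i.e.\ identifying $\Phi^{-1}\circ(\,\widehat{\phantom h}\,)\circ\Phi$ as an operator on $\mathbb R_0[[t]]$; I claim it is $F(t)\mapsto -F(-t)$ (whose $+1$-eigenspace is odd series and $-1$-eigenspace even series), which immediately gives the proposition.

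**Key steps.** First, it suffices to check the claim on the basis elements $h_n$, by linearity and the finiteness of the sums defining $\Phi$. So I compute $\widehat{h_n}$, i.e.\ $H_n(y,x)$. By \eqref{eq:expression_H_n},
\[
   H_n(y,x)=\frac{P_n(\psi_1(y))-P_n(-\psi_1(x))}{K(y,x)}=\frac{-\bigl(P_n(-\psi_1(x))-P_n(\psi_1(y))\bigr)}{K(x,y)}.
\]
Now I use the parity of $P_n$: from \eqref{eq:def_P_n}, $P_n$ is even if $n$ is even and odd if $n$ is odd, so $P_n(-u)=(-1)^n P_n(u)$. Hence
\[
   H_n(y,x)=\frac{(-1)^{n+1}\bigl(P_n(\psi_1(x))-P_n(-\psi_1(y))\bigr)}{K(x,y)}=(-1)^{n+1}H_n(x,y),
\]
so $\widehat{h_n}=(-1)^{n+1}h_n$. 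Therefore $h_n$ is symmetric precisely when $n$ is odd and anti-symmetric precisely when $n$ is even. Since $\Phi(\sum_{n\ge1}a_n t^n)=\sum_{n\ge1}a_n h_n$, the function $h=\Phi(F)$ is symmetric iff all $a_n$ with $n$ even vanish, i.e.\ iff $F$ is odd, and anti-symmetric iff all $a_n$ with $n$ odd vanish, i.e.\ iff $F$ is even. (Here I use injectivity of $\Phi$, Theorem~\ref{thm:main_intro-2}: $\widehat h=\pm h$ as harmonic functions forces the corresponding identity between their preimages $\sum(-1)^{n+1}a_n t^n=\pm\sum a_n t^n$ coefficientwise.)

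**Main obstacle.** There is essentially no analytic obstacle here; the one point that deserves care is the parity of $P_n$. In the case $p_{1,1}=0$ one has $P_n(X)=X^n$, which is unambiguously even/odd according to the parity of $n$. In the case $p_{1,1}\ne0$, $P_n=(X^2-p_{1,1}^2)^{\lfloor n/2\rfloor}X^{n[2]}$: the factor $(X^2-p_{1,1}^2)^{\lfloor n/2\rfloor}$ is even, and $X^{n[2]}$ is even or odd according to whether $n$ is even or odd, so indeed $P_n(-X)=(-1)^nP_n(X)$ in all cases. The only other thing to double-check is that passing from the identity $H_n(y,x)=(-1)^{n+1}H_n(x,y)$ of generating functions to the identity $h_n(j,i)=(-1)^{n+1}h_n(i,j)$ of coefficients is legitimate, which is immediate since both sides are genuine bivariate power series (Theorem~\ref{thm:main_intro-1}~\ref{thm:main_intro-1:it1}) and one compares coefficients of $x^{i-1}y^{j-1}$. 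Everything else is bookkeeping with $\Phi$.
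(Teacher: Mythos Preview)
Your proof is correct and follows essentially the same approach as the paper: both arguments hinge on the observation that $P_n(-X)=(-1)^nP_n(X)$, which forces $h_n$ to be symmetric for $n$ odd and anti-symmetric for $n$ even, and then conclude via the isomorphism $\Phi$ of Theorem~\ref{thm:main_intro-2}. Your presentation is in fact slightly more direct, computing $H_n(y,x)=(-1)^{n+1}H_n(x,y)$ from the bivariate formula \eqref{eq:expression_H_n}, whereas the paper first passes through the sections $H(x,0)$ and $H(0,x)$ before invoking the parity of $P_n$.
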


\begin{proof}
It is seen that $h(i,j)$ is symmetric (resp.\ anti-symmetric) if and only if $H(x,0)=H(0,x)$ (resp.\ $H(x,0)=-H(0,x)$).
    
First, in the case where $H$ is of type \eqref{eq: KH(x,0),KH(0,y)}, the identity $H(x,0)=H(0,x)$ is equivalent to $F(\psi_{1}(x)) = -F(-\psi_{1}(x))$. This implies that $h(i,j)$ is symmetric if and only if $F(t)$ is an odd function. The second statement in the anti-symmetric case would be proved similarly. 

By \eqref{eq:def_P_n}, $P_n$ is odd for $n$ odd and even for $n$ even, thus by the previous reasoning $h_n$ is symmetric for $n$ odd and anti-symmetric for $n$ even. We deduce then from Theorem \ref{thm:main_intro-2} that $\Phi(F)$ is symmetric if $F$ is odd and anti-symmetric if $F$ is even. Since the vector subspace of odd power series and the one of even power series are complementary in $\mathbb{R}_0[[t]]$, the result is deduced.
\end{proof}

\section{Various examples}
\label{sec:examples}

In this part, we provide a list of models for which one may compute the conformal map $\psi_{1}$ explicitly. We start with the example of simple random walks (Section~\ref{sec:SRW}). For this model, all generating functions happen to be rational functions, and most of the computations are rather easy to lead. We then move to arbitrary small step random walks (Section~\ref{sec:uniformization}) and obtain an expression for the conformal mapping in terms of generalized Chebychev polynomials. Finally, we construct of family of walks with arbitrary large jumps, in relation with plane bipolar orientations, for which one has a simple (algebraic) formula for the conformal mapping (Section~\ref{sec:larger_jumps}). We use this family of examples to construct harmonic functions for certain transition probabilities without second moment (Section~\ref{subsec:example_less_moments}), going beyond the classical literature on the topic.

\subsection{The simple random walk}
\label{sec:SRW}
We present here a first example, which is the simple random walk with transition probabilities $p_{1,0}=p_{0,1}=p_{-1,0}=p_{0,-1}=1/4$, see Figure \ref{fig:step_sets} and \eqref{eq:def_Laplacian_usual} for the associated Laplacian operator. The kernel takes the form
\begin{equation*}
   K(x,y) = xy\left(1-\frac{x+y+x^{-1}+y^{-1}}{4}\right)=-\frac{x(y-1)^2}{4}-\frac{y(x-1)^2}{4}.
\end{equation*}

\subsubsection*{Curve and conformal mappings}
Setting $(x,y)=(\eta s,\eta s^{-1})$ and solving (in $\eta$) the equation $K(x,y)=0$, one easily obtains 
\begin{equation*}
	\mathcal{S}_1 = \biggl\{{i}s\frac{s-{i}}{s+{i}}: s={e}^{{i}t},t\in[0,\pi)\biggr\},
\end{equation*}
which we may rewrite as
\begin{equation*}
   \mathcal{S}_1 = \{(1-\sin t,(1-\sin t)\tan t): t\in [0,\pi)\},
\end{equation*}
as announced in \eqref{eq:expression_S1_SRW}. See Figures \ref{fig:some_curves} and \ref{fig:def_theta}. One can choose a conformal mapping $\pi_{1}$ as
\begin{equation*}
	\pi_{1}(x) = \frac{x-(x-1)^2}{x+(x-1)^2}.
\end{equation*}
Indeed, $\pi_{1}$ is analytic in $\mathcal{S}_1^+$ and for all $x=\eta(e^{it})e^{it}\in\mathcal{S}_1$,
\begin{equation*}
	\pi_{1}(x) = \frac{-(x+\frac{1}{x})+3}{(x+\frac{1}{x})-1} = \frac{1+2i\frac{\sin^2t}{\cos t}}{1-2i\frac{\sin^2t}{\cos t}}\in\mathcal{C}.
\end{equation*}
With the conformal mapping $\phi$ defined in \eqref{eq:expression_phi}, we finally obtain
\begin{equation}
\label{eq:expression_psi_{1}_SRW}
	\psi_{1}(x) = \phi\circ\pi_{1}(x)= \frac{x}{(1-x)^2}.
\end{equation}

\subsubsection*{Polynomial harmonic functions}

Using Equations \eqref{eq:main-intro-2} and \eqref{eq:expression_psi_{1}_SRW}, one has
\begin{equation*}
   H(x,y)=\frac{F\bigl(\frac{x}{(1-x)^2}\bigr)-F\bigl(-\frac{y}{(1-y)^2}\bigr)}{K(x,y)}.
\end{equation*}
Writing $X=\frac{x}{(1-x)^2}$, $Y=\frac{y}{(1-y)^2}$ and $G(t)=-4F(t)$, one may rewrite the above equation more symmetrically, as
\begin{equation}
\label{eq:expressions_SRW_XY}
   H(x,y)=\frac{XY}{xy}\frac{G(X)-G(-Y)}{X+Y}.
\end{equation}
In particular, applying \eqref{eq:expressions_SRW_XY} with $G(t)= t$, we have
\begin{equation*}
	H_1(x,y)= \frac{1}{(1-x)^2(1-y)^2}=\sum_{i,j\geq 1}ijx^{i-1}y^{j-1}.
\end{equation*}
Recall that $h(i,j) = ij$ is the unique positive harmonic function for this model (unique up to multiplicative factors). 

\subsubsection*{Characterization of the positive harmonic function}
As explained in Section \ref{sec:solution_BVP}, the general Martin boundary theory implies that in the framework of this paper, there is a unique positive harmonic function, which in our construction corresponds to taking $F$ as a one-degree polynomial in Theorem \ref{thm:main_intro-2}. However, we don't have any direct proof of this general fact, except precisely for the simple random walk, for which explicit, and in our opinion instructive computations may be done.

More specifically, the question we would like to address here is the following: prove that for all polynomials $G(t)=\sum_{m=1}^{k}a_m t^m$ with degree $k\geq 2$, there exists at least one coefficient of $H(x,y)$ in \eqref{eq:expressions_SRW_XY} above which is negative (and in fact infinitely many coefficients are then negative). 

We first look at the case when $G$ is a monomial. If $G(t)=t^{2k}$, then one has $H(x,y)=-H(y,x)$ (see \eqref{eq:expressions_SRW_XY}) and thus $H$ must admit negative coefficients in its Taylor expansion, as the function itself takes negative values. The more interesting case is $G(t)=t^{2k+1}$. However, for a future use, we look at general (meaning non-necessarily odd) exponents, i.e., $G(t)=t^{k}$, for some $k\geq 1$. Then with \eqref{eq:expressions_SRW_XY} one has
\begin{equation*}
   H(x,y)=\frac{1}{xy}\sum_{\substack{i+j=k+1\\ i,j\geq 1}}(-1)^{j-1} X^i Y^j.
\end{equation*}
Moreover, as $n\to \infty$, one has 
\begin{equation*}
   [x^n]X^i \sim \frac{n^{2i-1}}{(2i-1)!}.
\end{equation*}
So for large values of $p$ and $q$,
\begin{align*}
   [x^{p+1} y^{q+1}]H(x,y)&\sim \sum_{\substack{i+j=k+1\\ i,j\geq 1}}(-1)^{j-1} \frac{p^{2i-1}}{(2i-1)!}\frac{q^{2j-1}}{(2j-1)!} \\&=pq\sum_{\substack{i+j=k-1\\ i,j\geq 0}}(-1)^{j} \frac{p^{2i}}{(2i+1)!} \frac{q^{2j}}{(2j+1)!}.
\end{align*}

Consider now the general case where $G(t)$ is a polynomial $\sum_{m=1}^{k}a_m t^m$, with $a_k\neq 0$, and take $(p,q)=(r_1,r_2)n$, where $r_1,r_2$ are positive integers and $n$ tends to infinity. Using the above estimate, we have
\begin{equation*}
   [x^{p+1} y^{q+1}]H(x,y)\sim n^{2k} r_1^{2k-1} r_2\sum_{\substack{i+j=k-1\\ i,j\geq 0}}(-1)^{j} \frac{(r_2/r_1)^{2j}}{(2i+1)!(2j+1)!}.
\end{equation*}
Replacing $r_2/r_1$ by $x$, our question is therefore equivalent to prove that for any fixed integer $k\geq 2$, there exists $x\in(0,\infty)$ such that 
\begin{equation*}
   \sum_{\substack{i+j=k-1\\ i,j\geq 0}}(-1)^{j} \frac{x^{2j}}{(2i+1)!(2j+1)!}<0.
\end{equation*}
To that purpose, we first observe that
\begin{align*}
   \sum_{\substack{i+j=k-1\\ i,j\geq 0}}(-1)^{j} \frac{x^{2j}}{(2i+1)!(2j+1)!}&=\frac{(1+ix)^{2k}-(1-ix)^{2k}}{2(2k)!ix}\\&=\frac{1}{(2k)!x}\Im ((ix+1)^{2k})\\&=\frac{(1+x^2)^k}{(2k)!x}\sin(2k\arctan x).
\end{align*}
Clearly, given $k\geq 2$ one may fix $x\in(0,\infty)$ such that the above is negative. More precisely, this function admits $k-1$ sign changes.

\subsubsection*{A related example: the king walk}
We continue with the king walk, which by definition (see the second example on Figure \ref{fig:step_sets}) admits the kernel
\begin{equation*}
   K(x,y) = xy\left(1-\frac{xy+x+xy^{-1}+y^{-1}+x^{-1}y^{-1}+x^{-1}+x^{-1}y+y}{8}\right).
\end{equation*}
As the simple walk, its unique positive harmonic function is given by $h(i,j)=ij$. However, this example is a bit different as we now have $p_{1,1}\neq 0$. A few computations starting from the kernel yield
\begin{equation*}  
    \mathcal{S}_1= \biggl\{\frac{u(t)-\sqrt{u(t)^2-4}}{2}{e}^{{i}t}:t\in[0,2\pi)\biggr\},
\end{equation*}
where $u(t) = -\cos t+\sqrt{12-3\cos^2 t}$. This curve is drawn on Figure~\ref{fig:def_theta}. Computations similar to that of the simple walk lead to the following rather simple expressions for the conformal mappings:
\begin{equation*}
    \pi_{1}(x) = \frac{3x}{x^2+x+1} \quad \text{and} \quad \psi_{1}(x) = \frac{x^2+4x+1}{(1-x)^2}.
\end{equation*}
Using Equation \eqref{eq:main-intro-2} with $F(t)=t/16$\
, then one can recover the generating function of the positive harmonic function:
\begin{equation*}
    H_1(x,y) = \frac{F(\psi_{1}(x))-F(-\psi_{1}(y))}{K(x,y)}=\frac{1}{(1-x)^2(1-y)^2} = \sum_{i,j\geq 1} ijx^{i-1}y^{j-1}.
\end{equation*}
Similar computations would lead to other harmonic functions.

\subsection{Symmetric small step random walks and comparison between the analytic approaches of \cite{CoBo-83} and \cite{FaIaMa-17,Ra-14}}
\label{sec:uniformization}

In this part, we look at the class of symmetric, small step random walks, meaning that $p_{k,\ell}=0$ as soon as $\vert k\vert\geq 2$ or $\vert \ell\vert\geq 2$. As illustrated by our bibliography, this class of models has been (and is still) widely studied in the literature, the main reason being that the zero set of the kernel is, in this case, a Riemann surface of genus $0$ or $1$, opening the way to explicit parametrizations in terms of rational or elliptic functions.

Here our main objective is twofold: first, we will derive an expression of the conformal mapping $\psi_{1}$ for small step random walks, see Proposition~\ref{prop:expression_conformal_map_small_jumps}. Doing so, we will introduce some tools from complex analysis, which are close to the analytic approach developed in \cite{FaIaMa-17,Ra-14}. As a second step, we will compare the analytic approach used in this paper (inspired by \cite{CoBo-83}) with the one of \cite{FaIaMa-17,Ra-14}.

\subsubsection*{Explicit expression for the conformal mapping $\psi_{1}$}

We first introduce a function $T_a(x)$ that generalizes  the classical Chebyshev polynomials of the first kind, obtained when $a\in\mathbb N$. This function is defined for $x\in\mathbb C\setminus (-\infty,-1]$ by
\begin{equation}
\label{eq:Chebyshev_polynomial}
T_a(x)
={_2F}_1 \Bigl(-a,a;\frac{1}{2};\frac{1-x}{2}\Bigr),
\end{equation}
where ${_2F}_1$ is the Gauss hypergeometric function. Then $T_a$
admits the following expansion, valid for $\vert x-1\vert<2$:
\begin{equation*}
  T_a(x)= \sum_{n\geq 0} \frac a{a+n} {a+n \choose 2n} 2^n (x-1)^n.
\end{equation*}
When $a\in \mathbb N$, then the above sum ranges from $0$ to $a$, and $T_a(\cos t)=\cos(at)$, by
definition of the classical Chebychev polynomial. 
Another useful formula, valid for $x$ in $\mathbb C\setminus (-\infty,-1)$, is
\begin{equation*}
  T_a(x)=\frac{1}{2}
  \Bigl(\bigl(x+\sqrt{x^2-1}\bigr)^a+\bigl(x-\sqrt{x^2-1}\bigr)^a\Bigr).
\end{equation*}

\begin{figure}
	\hspace{-10mm}
	\begin{tikzpicture}[scale=1.5]
	[step=0.25cm] (-1,-1) grid (1,1);
	\draw[->] (-1.1,0) -- (1.1,0)  node[right] {$1$};
	\draw[->] (0,-1.1) -- (0,1.1)  node[above] {$1$};
	
	\filldraw[thick,variable=\t,domain=0:180,samples=50,color=blue,fill=blue!5]
	plot ({1-sin(\t)},{tan(\t)-sin(\t)*tan(\t)});
	\draw[thick,variable=\t,domain=0:360,samples=50,magenta]
	plot ({cos(\t)},{sin(\t)});
	\draw[thick,variable=\t,domain=(3-2*sqrt(2)):1,samples=50,red]
	plot ({\t},{0});
	
	\fill[red] (0.17157,0) circle (1pt);

	\put(20,15){$\mathcal S_1$};
	\put(8,-10){$x_1$};

	\end{tikzpicture}
	\begin{tikzpicture}[scale=2.5]
	[step=0.25cm] (0,0) grid (1,1);
	\draw[->] (-0.1,0) -- (1.1,0)  node[right] {$\infty$};
	\draw[->] (0,-0.1) -- (0,1.1)  node[above] {$\infty$};

	\fill[blue!5,] (0,0) -- (0.7,0.7) -- (1,0) -- cycle;
	\draw[thick,variable=\t,domain=0:0.8,samples=50,blue]
	plot ({\t},{\t});
	\draw[thick,variable=\t,domain=0:1,samples=50,red]
	plot ({\t},{0});
	\draw[thick,variable=\t,domain=0:1,samples=50,magenta]
	plot ({0},{\t});

	\fill[red] (0.5,0) circle (0.75pt);

	\put(35,-13){$1$};
	
	\draw[->] (-0.4,0.5) -- (-0.1,0.5);
	\draw[->] (-0.1,0.4) -- (-0.4,0.4);
	\draw[->] (1,0.5) -- (1.3,0.5);
	
	\put(70,40){$\omega(s)$};
	\put(-30,40){$\sigma(x)$};
	\put(-30,15){$x(s)$};
	
	\end{tikzpicture}
	\begin{tikzpicture}[scale=1.5]
	[step=0.25cm] (0,1) grid (1,-1);
	\draw[->] (0,-1.1) -- (0,1.1)  node[above] {$\infty$};
	\draw[->] (-0.1,0) -- (1.1,0)  node[right] {$\infty$};
	
	\fill[blue!5,] (0,1) -- (0,-1) -- (1,-1) -- (1,1) -- cycle;
	\draw[thick,variable=\t,domain=-1:1,samples=50,blue]
	plot ({0},{\t});
	\draw[thick,variable=\t,domain=0.4:1,samples=50,red]
	plot ({\t},{0});

	\fill[red] (0.4,0) circle (0.75pt);
	
	\put(15,-10){$2$};
	
	\end{tikzpicture}
	\caption{The conformal mapping for the simple random walk: $\sigma(x)$ maps conformally $\mathcal{S}_1^+\setminus [x_1,1]$ (the light blue domain) onto the cone, and $\omega(s)$ maps conformally the cone onto the right half-plane $\mathcal{H}^+$.}
	\label{fig:uniformzation-simpleRW}
\end{figure}
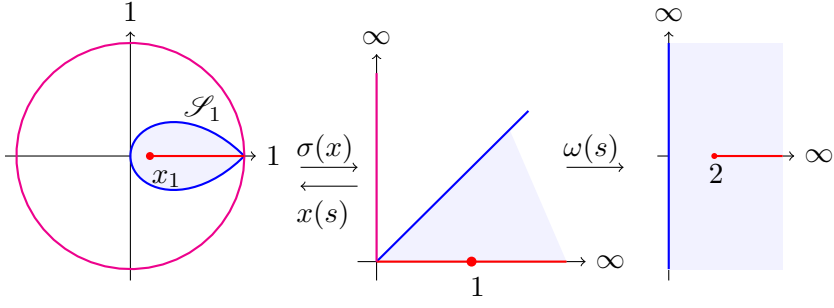

\begin{proposition}
\label{prop:expression_conformal_map_small_jumps}
Assume \ref{H1:jumps}--\ref{H5:jumps} and the small step hypothesis. Let $\theta$ as in \eqref{eq:angle_at_1} and
\begin{equation*}
   \mu(x) = \frac{\mu_0x-\mu_1}{2(x-1)},
\end{equation*}
with $\mu_0$ and $\mu_1$ defined by \eqref{eq:uniformization_points}. Then the conformal map $\psi_{1}$ may be chosen such that
\begin{equation*}
   \psi_{1}(x)=2
   T_{\pi/\theta}(\mu(x)),
\end{equation*}
where $T_{\pi/\theta}$ is the generalized Chebyshev polynomial \eqref{eq:Chebyshev_polynomial} with $a=\pi/\theta$.
\end{proposition}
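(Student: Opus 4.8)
The plan is to exhibit the function $x\mapsto 2T_{\pi/\theta}(\mu(x))$ as a conformal bijection from $\mathcal{S}_1^+$ onto $\mathcal{H}^+$, and then to conclude by the uniqueness (up to a positive multiplicative constant) of such a map, which is the content of Lemmas~\ref{lem:pi_{1}} and~\ref{lem:psi_{1}(0)}. The reason $T_{\pi/\theta}$ is the natural object is the identity $T_a(w)=\cos(a\arccos w)$: the function $T_a$ is analytic and single-valued on $\mathbb{C}\setminus(-\infty,-1]$ (unlike the naive power $w\mapsto w^a$, which would carry a spurious branch point at the origin), it satisfies $T_a(w)\sim 2^{a-1}w^a$ as $w\to\infty$, and it unfolds an angular opening $\theta$ into $\pi$. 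Precomposing with the Möbius map $\mu$, which has a simple pole at $1$, this matches the prescribed singularity $\psi_1(x)\sim c(1-x)^{-\pi/\theta}$ at the corner point of $\mathcal{S}_1$, cf.\ Lemmas~\ref{lem:psi_{1}(0)} and~\ref{lemma:angle_at_1}.

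First I would run the standard kernel analysis for the small step case. Writing $K(x,y)=a(x)y^2+b(x)y+c(x)$, the discriminant $\delta(x)=b(x)^2-4a(x)c(x)$ has degree at most $4$; the zero drift hypothesis \ref{H4:jumps} makes $(1,1)$ a node of $V[K]$ (cf.\ Lemma~\ref{lemma:angle_at_1}), so $V[K]$ is a rational curve, $x=1$ is a branch point of the two $y$-valued algebraic functions $Y_0,Y_1$ attached to $K$, and there is exactly one further real branch point $x_1\in(0,1)$, with $[x_1,1]=\mathbb{R}\cap\overline{\mathcal{S}_1^+}$; moreover a single-valued branch of $\sqrt{\delta(x)}$ (hence of $Y_0,Y_1$) exists on the slit domain $\mathcal{S}_1^+\setminus[x_1,1]$. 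I would then fix $\mu_0,\mu_1$ as in~\eqref{eq:uniformization_points}, so that $\mu$ is real on $\mathbb{R}$, $\mu(x_1)=1$, $\mu(1)=\infty$, with the orientation sending $\mathcal{S}_1^+$ into $\{\Re w>0\}$; in particular $\mu([x_1,1])=[1,\infty)$ and $\mu(\mathcal{S}_1^+)\cap(-\infty,-1]=\varnothing$, so that $2T_{\pi/\theta}\circ\mu$ is analytic throughout $\mathcal{S}_1^+$.

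The core of the argument is then to show that $\mu$ maps the curve $\mathcal{S}_1$ onto the branch $\Re w>0$ of the hyperbola $(\Re w)^2/\cos^2(\theta/2)-(\Im w)^2/\sin^2(\theta/2)=1$, equivalently $\mu(\mathcal{S}_1)=\{\cos(\tfrac{\theta}{2}+iv):v\in\mathbb{R}\}$. Granting this, applying $\arccos$ shows that $\arccos\circ\mu$ maps $\mathcal{S}_1^+\setminus[x_1,1]$ conformally onto the vertical strip $\{0<\Re w<\theta/2\}$ — the two edges of the slit $[x_1,1]$, across which $\sqrt{\delta}$ changes sign, being sent respectively to $\{\Re w=0,\ \Im w>0\}$ and $\{\Re w=0,\ \Im w<0\}$; multiplying by $\pi/\theta$ turns this strip into $\{0<\Re w<\pi/2\}$, which $\cos$ maps conformally onto $\mathcal{H}^+\setminus[1,\infty)$, with $[1,\infty)$ doubly covered; finally multiplying by $2$ and using that, for $x\in[x_1,1]$, one has $\mu(x)\in[1,\infty)$ and hence $2T_{\pi/\theta}(\mu(x))=2\cosh\!\big(\tfrac{\pi}{\theta}\operatorname{arccosh}\mu(x)\big)\in[2,\infty)$, taking the same (and monotone) values from the two sides of the slit, one deduces that $2T_{\pi/\theta}\circ\mu$ is injective on all of $\mathcal{S}_1^+$ with image exactly $\mathcal{H}^+$. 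By the uniqueness recalled above, and because~\eqref{eq:uniformization_points} is designed so that the resulting map carries the normalisation $\psi_1(0)=p_{1,1}$ of Lemma~\ref{lem:psi_{1}(0)} (or $\psi_1(0)=0$ when $p_{1,1}=0$), this gives $\psi_1=2T_{\pi/\theta}\circ\mu$.

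The one genuinely delicate point is the hyperbola identity: one must control the image of the entire curve $\mathcal{S}_1$ under $\mu$, not merely its behaviour near the corner (the corner, via Lemma~\ref{lemma:angle_at_1}, only accounts for the opening $\theta$ of $\mu(\mathcal{S}_1)$ at infinity). The route I would take is to combine the relation $y=\overline{x}$ valid on $\mathcal{S}_1$ (since $\mathcal{K}=\{(\eta(s)s,\overline{\eta(s)s}):|s|=1\}$) with the Galois symmetry of the kernel: there is a rational function on $V[K]$, built from the branch points and having $\mu$ as its ``Möbius part'', which the equations $K(x,y)=0$ and $y=\overline{x}$ force to be purely imaginary on $\mathcal{S}_1$, and the normalisation~\eqref{eq:uniformization_points} is precisely what turns ``purely imaginary'' into the stated hyperbola equation. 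Equivalently, one may uniformise the rational curve $V[K]$, write $x$ and $y$ as rational functions of a single parameter, identify $\mathcal{S}_1$ as a conic in that parameter, and obtain $\psi_1=2T_{\pi/\theta}\circ\mu$ by elimination. Everything else — single-valuedness of $T_{\pi/\theta}\circ\mu$, the slit bookkeeping, and the concluding argument principle — is then routine.
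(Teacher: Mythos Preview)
Your approach is correct and essentially the same as the paper's: the paper carries out precisely the uniformization route you sketch at the end, showing that the inverse $\sigma(x)=\mu(x)+\sqrt{\mu(x)^2-1}$ of the rational parametrization $x(s)$ of $V[K]$ maps $\mathcal{S}_1^+\setminus[x_1,1]$ conformally onto the cone $\mathcal{E}^+=\{re^{i\chi}:0<\chi<\theta/2\}$ (this is equivalent to your hyperbola claim via $\mu=\tfrac12(\sigma+\sigma^{-1})$, since $s\in e^{i\theta/2}\mathbb{R}_+$ gives $\mu=\cos(\theta/2-i\log|s|)$), and then that $\omega(s)=s^{\pi/\theta}+s^{-\pi/\theta}$ straightens the cone to $\mathcal{H}^+$, with the slit $[x_1,1]$ handled exactly as you describe. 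Your $\arccos$/strip picture and the paper's cone/$\omega$ picture are the same computation in different coordinates, the identity $\omega\circ\sigma=2T_{\pi/\theta}\circ\mu$ being immediate from the closed form $2T_a(w)=(w+\sqrt{w^2-1})^a+(w-\sqrt{w^2-1})^a$.
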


We now prove Proposition \ref{prop:expression_conformal_map_small_jumps}.
The main idea of the proof is borrowed from \cite[Sec.~6.5]{FaIaMa-17}, see also \cite{Ra-14}. For driftless, small step random walks, the zero set of the kernel
\begin{equation*}
   \{(x,y)\in(\mathbb C\cup\{\infty\})^2: K(x,y)=0\}
\end{equation*}
is a Riemann surface of genus $0$, which can thus be parametrized with rational functions. As it turns out, the curves $\mathcal S_1$ and $\mathcal S_2$ become particularly simple in the uniformizing variable, from what we will deduce an expression for the conformal map.

Before stating the rational uniformisation of the above Riemann surface, we introduce a few notations. First, the kernel \eqref{eq:def_K} may be rewritten as $K(x,y) = a(x)y^2 + b(x)y + c(x)$,
where
\begin{equation*}
\left\{\begin{array}{lcl}
	a(x) &=& -(p_{-1,-1}x^2 + p_{0,-1}x + p_{1,-1}),\\
	b(x) &=& -(p_{-1,0}x^2 - x +p_{1,0}),\\
	c(x) &=& -(p_{-1,1}x^2 + p_{0,1}x + p_{1,1}).
\end{array}\right.
\end{equation*}
Let also $d=b^2-4ac$ denote the discriminant of $K(x,y)$ in $y$. It is seen that $d$ has degree $3$ or $4$, and that $1$ is a double root. In the case where $d$ has degree $3$ (resp.\ $4$), then the remaining root is denoted by $x_1$ (resp.\ the remaining roots are denoted by $x_1$ and $x_4$). It has been proved in \cite{FaIaMa-17} that $x_1\in [-1,1)$ and $x_4\in (1,\infty)\cup (-\infty,-1]$. In the case of $d$ having degree $3$, we will denote $x_4=\infty$. Now put
\begin{equation*}
\left\{\begin{array}{lcl}
	s_0 &=&\frac{2-(x_1+x_4)+2\sqrt{(1-x_1)(1-x_4)}}{x_4-x_1},\\
	s_1 &=& \frac{x_1+x_4-2x_1x_4+2\sqrt{x_1x_4(1-x_1)(1-x_4)}}{x_4-x_1},
\end{array}\right.
\end{equation*}
as well as (with $\theta$ defined in \eqref{eq:angle_at_1})
\begin{equation}
\label{eq:uniformization_points}
   \mu_0 = s_0 +\frac{1}{s_0},\quad \mu_1 = s_1 +\frac{1}{s_1},\quad\rho = e^{-i\theta}.
\end{equation}

We may now state the rational uniformization; for a proof, we refer to \cite[Sec.~2.3]{FaRa-11}.
\begin{lemma}
\label{lem:unif}
One has
\begin{equation*}
   \{(x,y)\in(\mathbb C\cup\{\infty\})^2: K(x,y)=0\}=\{(x(s),y(s)) : s\in\mathbb C\cup\{\infty\}\},
\end{equation*}
where
\begin{equation*}
	x(s) = \frac{(s-s_1)(s-\frac{1}{s_1})}{(s-s_0)(s-\frac{1}{s_0})} \quad \text{and} \quad
	y(s) = \frac{(\rho s-s_1)(\rho s-\frac{1}{s_1})}{(\rho s-s_0)(\rho s-\frac{1}{s_0})}.
\end{equation*}
Moreover, the above rational functions admit the involutions $x(s)=x(1/s)$ and $y(s)=y(1/(\rho^2s))$.
\end{lemma}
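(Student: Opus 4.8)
The plan is to follow the classical genus-$0$ uniformization of the kernel curve. Write $K(x,y)=a(x)y^2+b(x)y+c(x)$ as in the discussion preceding the lemma, and set $S=\{(x,y)\in(\mathbb C\cup\{\infty\})^2:K(x,y)=0\}$. First I would view $S$, through $(x,y)\mapsto x$, as a double branched covering of the $x$-sphere. Its ramification points lie above the zeros of the discriminant $d=b^2-4ac$, together with $x=\infty$ when $\deg d$ is odd. By the property recalled from \cite{FaIaMa-17}, $d$ has degree $3$ or $4$ and $x=1$ is always a \emph{double} zero of $d$; hence the only effective branch points are $x_1$ and $x_4$ (with the convention $x_4=\infty$ when $\deg d=3$). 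Riemann--Hurwitz then gives $2g_S-2=2(0-2)+2=-2$, so $S$ has genus $0$ and is therefore biholomorphic to $\mathbb C\cup\{\infty\}$; fixing such an isomorphism $s\mapsto(x(s),y(s))$ makes both $x$ and $y$ rational functions of $s$.

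Next I would pin down $x(s)$. Since $x$ has degree $2$ on $S$, $x(s)$ is a degree-$2$ rational map whose two critical values are $x_1$ and $x_4$, while the values $1$ and $\infty$ are each attained at two distinct points. I would use the residual Möbius freedom in the choice of $s$ to normalise so that the two points of $S$ above $x=1$ sit at $s=0$ and $s=\infty$, and the two points above $x=\infty$ at $s=s_0$ and $s=1/s_0$; equivalently, this amounts to choosing $s$ so that the nontrivial deck transformation of the $x$-covering is the involution $s\mapsto 1/s$, whose fixed points $s=\pm 1$ are then the critical points. This forces $x(s)$ to have the displayed form $x(s)=\frac{(s-s_1)(s-1/s_1)}{(s-s_0)(s-1/s_0)}$ up to the two constants, and imposing $\{x(1),x(-1)\}=\{x_1,x_4\}$ yields two quadratic equations whose solution produces the closed formulas for $s_0,s_1$ stated before \eqref{eq:uniformization_points}. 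The involution identity $x(s)=x(1/s)$ and the relations $x(0)=x(\infty)=1$ then hold by construction.

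Finally, for $y(s)$ I would exploit the symmetry $K(x,y)=K(y,x)$ coming from \ref{H1:jumps}: the covering $(x,y)\mapsto y$ has the same branch points $x_1,x_4$, so after the same normalisation $y$ must be a degree-$2$ rational function of $s$ of the same shape, whence $y(s)=x(\rho s)$ for some constant $\rho$ with $|\rho|=1$; the deck transformation of the $y$-covering is then $s\mapsto 1/(\rho^2 s)$, which gives the stated second involution. The value of $\rho$ is fixed by a local computation at the node of $V[K]$ at $(1,1)$, which corresponds under the uniformization to the two points $s=0$ and $s=\infty$: expanding $x$ and $y$ there, the two tangent directions of the kernel curve at $(1,1)$ come out to be $\rho$ and $1/\rho$, and matching these local data with the opening angle of $\mathcal S_1$ at $1$ computed in Lemma~\ref{lemma:angle_at_1} forces $\arg\rho=-\theta$, i.e.\ $\rho=e^{-i\theta}$.

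The soft part (genus $0$, existence of a rational parametrisation) is routine; the real work — and the place where one must be careful about orientations and signs — is the explicit bookkeeping: producing the precise constants $s_0,s_1$, and proving $\rho=e^{-i\theta}$ rather than merely $|\rho|=1$, by relating the rotation $s\mapsto\rho^2 s$ (the composition of the two deck involutions) to the corner angle of Lemma~\ref{lemma:angle_at_1}. This computation is carried out in detail in \cite[Sec.~2.3]{FaRa-11}, to which we refer.
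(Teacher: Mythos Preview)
The paper does not actually prove this lemma: it simply states the result and refers to \cite[Sec.~2.3]{FaRa-11} for a proof. Your proposal follows exactly the strategy of that reference (genus-$0$ via Riemann--Hurwitz, Möbius normalisation of the deck involution to $s\mapsto 1/s$, symmetry to get $y(s)=x(\rho s)$, identification of $\rho$ via the local angle at $(1,1)$), and you likewise conclude by citing \cite[Sec.~2.3]{FaRa-11} for the explicit bookkeeping; so your approach is the same as the paper's, just with a more detailed sketch than the paper provides.
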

Then the set of complex points where $\vert x\vert= \vert y\vert$ (of which $\mathcal K$ in \eqref{eq:main_domain} is a subset) is very
simple, as in the $s$ variable, it becomes simply the line $e^{i\frac{\theta}{2}}\mathbb R$. More precisely, defining the
cone
    \begin{equation*}
	   \mathcal{E}^+=\{r{e}^{{i}\chi}:r>0 \text{ and } \chi\in (0,\frac{\theta}{2})\},
    \end{equation*}
one has the following result, which is illustrated on Figure \ref{fig:uniformzation-simpleRW}:
%
\begin{lemma}
    The function $x(s)$ is one-to-one from ${e}^{{i}\frac{\theta}{2}}\mathbb{R}_+$ onto $\mathcal{S}_1$. Moreover, $x(s)$ is a conformal mapping from $\mathcal{E}^+$ onto $\mathcal{S}_1^+\setminus [x_1,1]$.
\end{lemma}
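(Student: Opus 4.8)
The plan is to identify, in the uniformizing variable $s$ provided by Lemma~\ref{lem:unif}, exactly which part of the Riemann surface corresponds to the domain $\mathcal K$ and to the curve $\mathcal S_1$, and then to argue conformality by the argument principle together with the local behaviour of $x(s)$. First I would record that the condition $\vert x(s)\vert = \vert y(s)\vert$ defining $\mathcal K$ translates, using the explicit rational expressions for $x(s),y(s)$ and the relation $y(s)=y(1/(\rho^2 s))$ with $\rho = e^{-i\theta}$, into the single linear condition $s\in e^{i\theta/2}\mathbb R$; this is the key simplification already quoted in the text, and I would justify it by noting that $x(s)$ and $y(s)$ have the same moduli precisely when $s$ and $\rho^2 s$ are swapped by the reflection $s\mapsto 1/\bar s$ fixing the unit circle, equivalently when $\bar s = 1/(\rho^2 s)$, i.e.\ $\vert s\vert\,e^{-i\theta}\cdot(\text{phase})$ lands on the real axis after multiplication by $e^{i\theta/2}$. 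Then the portion of $\mathcal K$ with $\vert x\vert=\vert y\vert\leq 1$ is carried by the half-line $e^{i\theta/2}\mathbb R_+$ (the other half-line $e^{i\theta/2}\mathbb R_-$ corresponding to the ``exterior'' branch or to the modulus $>1$ region), and the two real endpoints $s=0,\infty$ — which are exchanged by $s\mapsto 1/s$ — both map to the corner point $x=1$, while the point on $e^{i\theta/2}\mathbb R_+$ fixed by $s\mapsto 1/s$, namely $s=e^{i\theta/2}$, maps to the second real intersection point $x_1=x(e^{i\theta/2})$ of $\mathcal S_1$ with $\mathbb R$.

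Next I would establish the first assertion, that $x(s)$ is a bijection from $e^{i\theta/2}\mathbb R_+$ onto $\mathcal S_1$. Since by Lemma~\ref{lem:unif} the map $x$ satisfies $x(s)=x(1/s)$, the restriction to $e^{i\theta/2}\mathbb R_+$ is exactly a fundamental domain for this involution on the full parametrizing line $e^{i\theta/2}\mathbb R$, so injectivity of $x$ on $e^{i\theta/2}\mathbb R_+$ reduces to showing that $x$ takes no value twice on this half-line apart from the forced identification at $0$ and $\infty$; this follows from strict monotonicity of a suitable real parameter along the curve, which one reads off from the proof of Lemma~\ref{lemma: properties of S1 S2'} (where it is shown that $s\mapsto \eta(s)s$ is one-to-one, resp.\ two-to-one, onto $\mathcal S_1$ according to whether $p_{1,1}\neq 0$ or $p_{1,1}=0$, and in both cases the parametrization by the half-line is injective after accounting for the symmetry). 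Surjectivity is then the matching count: as $s$ runs over $e^{i\theta/2}\mathbb R_+$, $x(s)$ traces a closed curve through $1$ and $x_1$ which, by the identification with $\mathcal K$ established above and the description $\mathcal S_1=\{\eta(s)s:\vert s\vert=1\}$, is precisely $\mathcal S_1$.

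For the conformality statement I would use the standard boundary-correspondence criterion for conformal maps: $x$ is meromorphic on $\mathbb C\cup\{\infty\}$, the set $\mathcal E^+=\{re^{i\chi}: r>0,\ \chi\in(0,\theta/2)\}$ is a sector bounded by the two half-lines $\mathbb R_+$ and $e^{i\theta/2}\mathbb R_+$, and I would check that $x$ maps each of these two boundary rays homeomorphically onto an arc, with the two image arcs being $[x_1,1]$ traversed twice (once from each side, since $\mathbb R_+\subset\mathbb R$ and $x$ is real there, with $x(0)=1$, $x(s_1)$ or the relevant real critical value, and $x(\infty)=1$) and $\mathcal S_1$ respectively, so that the boundary of $\mathcal E^+$ maps onto the Jordan curve bounding $\mathcal S_1^+\setminus[x_1,1]$; one then invokes the argument principle (the image curve winds once around each interior point) to conclude $x$ is univalent on $\mathcal E^+$ with image exactly $\mathcal S_1^+\setminus[x_1,1]$, using the known fact from \cite{FaIaMa-17} that the branch points $x_1,x_4$ are the only ramification points of $x$ and that $x_1\in[-1,1)$, $x_4\notin(-1,1)$, so no critical value of $x$ lies in the open region. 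Here one must be careful that $x$ is genuinely injective on the ray $\mathbb R_+$ up to the slit: this is where the double root of the discriminant $d$ at $x=1$ and the location of $x_1$ enter, forcing the ray $\mathbb R_+$ to fold onto the segment $[x_1,1]$ exactly once in each direction.

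The main obstacle I anticipate is the careful bookkeeping of which of the four arcs $\{e^{i\theta/2}\mathbb R_\pm,\ \mathbb R_\pm,\ \text{unit circle image}\}$ goes where, and in particular verifying cleanly that the sector $\mathcal E^+$ (rather than its complement or the reflected sector) is the one whose image is the \emph{bounded} component $\mathcal S_1^+$ minus the slit; this is a matter of tracking orientations (the counterclockwise/clockwise statements in Lemma~\ref{lemma: properties of S1 S2'}\ref{it:S1-iii}) and of locating $s_0,s_1$ and their reciprocals relative to the sector, using the explicit formulas \eqref{eq:uniformization_points}. Once the boundary correspondence and orientation are pinned down, the conformality itself is immediate from the argument principle, so the geometry of the configuration is really the only delicate point.
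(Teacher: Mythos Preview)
Your overall strategy—identifying $\mathcal K$ in the uniformizing variable and then arguing by boundary correspondence—is reasonable, but there are concrete gaps and one outright error.

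First, the involution $s\mapsto 1/s$ does \emph{not} preserve the line $e^{i\theta/2}\mathbb R$: it sends $re^{i\theta/2}$ to $r^{-1}e^{-i\theta/2}$, which lies on the conjugate half-line. In particular, $s=e^{i\theta/2}$ is not a fixed point of $s\mapsto 1/s$, and your claim that $x(e^{i\theta/2})=x_1$ is unsupported. So the fundamental-domain reasoning for injectivity on $e^{i\theta/2}\mathbb R_+$ collapses. (The correct derivation of $s\in e^{i\theta/2}\mathbb R$ from $y(s)=\overline{x(s)}$ goes via $\rho s=\bar s$, since $y(s)=x(\rho s)$ and $x$ has real coefficients; your stated condition $\bar s=1/(\rho^2 s)$ is in fact impossible for $\theta\neq0$.)

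Second, you never verify that $\vert x(s)\vert<1$ on $e^{i\theta/2}\mathbb R_+$; you simply assert that the positive half-line gives the bounded component. But this is exactly what is needed to land in $\mathcal S_1$ rather than on the exterior piece of $\{\vert x\vert=\vert y\vert\}$. The paper handles this by a direct computation: for $s=re^{i\theta/2}$ one shows $\Im(\phi(x(s)))>0$ with $\phi$ as in \eqref{eq:expression_phi}, which is equivalent to $\vert x(s)\vert<1$.

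The paper's approach avoids all of your injectivity bookkeeping by invoking one global fact up front: $x(s)$ is a conformal bijection from the \emph{right half-plane} $\mathcal H^+$ onto $\mathbb C$ minus a real slit (namely $[x_1,x_4]$ or its complement in $\mathbb R$, depending on the sign of $x_4$). Since both $e^{i\theta/2}\mathbb R_+$ and the sector $\mathcal E^+$ lie inside $\mathcal H^+$ (as $\theta/2\in(0,\pi/2)$), injectivity on each is automatic. The remaining work is only to identify the images: for the half-line, the inequality $\vert x(s)\vert<1$ above; for the sector, checking that the real boundary ray $\mathbb R_+$ maps onto the slit $[x_1,1]$, so that the image of $\mathcal E^+$ is $\mathcal S_1^+\setminus[x_1,1]$. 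This is considerably shorter than a boundary-correspondence argument and sidesteps the orientation issues you flag at the end.
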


\begin{proof}
The crucial fact is that $x(s)$ maps one-to-one from $\mathcal{H}^+$ onto the whole plane $\mathbb{C}$ cut along some segment. Depending on the value of $x_4$, $x(s)$ maps one-to-one from $\mathcal{H}^+$ onto
\begin{itemize}
   \item $\mathbb{C}\setminus [x_1,x_4]$ if $x_4>1$ or $x_4=\infty$;
   \item $\mathbb{C}\setminus ([x_1,\infty)\cup (-\infty,x_4])$ if $x_4<-1$.
\end{itemize}

We first prove that $x(s)$ maps one-to-one from ${e}^{{i}\theta/2}\mathbb{R}_+$ onto $\mathcal{S}_1$. Indeed, for a point $s=r{e}^{{i}\frac{\theta}{2}}$ with $r>0$, one has
\begin{equation*}
	\Im \bigl(\phi(x(s))\bigr)=\frac{2(r+\frac{1}{r})\cos\frac{\theta}{2}-(\mu_0+\mu_1)}{\mu_1-\mu_0}\geq \frac{4\cos\frac{\theta}{2}-(\mu_0+\mu_1)}{\mu_1-\mu_0} >0,
\end{equation*}
where the last inequality follows from a direct computation.
This means that $\vert x(s)\vert <1$. Combining with the facts that $x(s)=\overline{y(s)}$ and $(x(s),y(s))$ is a root of $K(x,y)$, we deduce that $x(s)\in\mathcal{S}_1$ for all $s\in {e}^{{i}\frac{\theta}{2}}\mathbb{R}_+$. The one-to-one property between the two curves then follows. To conclude, it is easy to check that $x(s)$ also maps one-to-one $[0,1]$ (resp.\ $[1,\infty)$) onto $[x_1,1]$. Hence, $x(s)$ maps one-to-one from $\mathcal{E}^+$ onto $\mathcal{S}_1\setminus [x_1,1]$.
\end{proof}

Let $\sigma(s)$ denote the inverse mapping of $x(s)$:
\begin{equation}
\label{eq:inverse_mapping_of_uniformization}
	\sigma(x) = \frac{\mu_0x-\mu_1}{2(x-1)} +\sqrt{\left (\frac{\mu_0x-\mu_1}{2(x-1)}\right )^2-1}.
\end{equation}
The branch cut of the square root in \eqref{eq:inverse_mapping_of_uniformization} is chosen such that $\sigma(s)$ maps conformally $\mathcal{S}_1^+\setminus [x_1,1]$ onto $\mathcal{E}^+$.

Now consider the following mapping
\begin{equation*}
	\omega(s)=s^{\pi/\theta} + s^{-\pi/\theta},
\end{equation*}
which maps conformally $\mathcal{E}^+$ onto $\mathcal{H}^+\setminus [2,\infty)$. It is seen that $\omega\circ\sigma(s)$ maps conformally $\mathcal{S}_1^+\setminus [x_1,1]$ onto $\mathcal{H}^+\setminus[2,\infty)$ (see Figure \ref{fig:uniformzation-simpleRW}). In fact, in the following lemma, a stronger statement can be deduced.

\begin{lemma}
\label{lem:bef_conc}
The function $\omega\circ \sigma$ maps conformally $\mathcal{S}_1^+$ onto $\mathcal{H}^+$.
\end{lemma}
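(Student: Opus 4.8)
We have already established that $\omega\circ\sigma$ maps $\mathcal{S}_1^+\setminus[x_1,1]$ conformally onto $\mathcal{H}^+\setminus[2,\infty)$. To upgrade this to a conformal map from $\mathcal{S}_1^+$ onto $\mathcal{H}^+$, the plan is to show that the slit $[x_1,1]$ is a removable obstruction: the function $\omega\circ\sigma$ extends analytically and injectively across the segment $[x_1,1)$, and the images of the two sides of the slit together fill in the missing half-line $[2,\infty)$. The key point is that $[x_1,1)$ is not a true boundary of $\mathcal{S}_1^+$ — it is an artificial cut introduced only because the uniformizing parametrization $x(s)$ is two-to-one via the involution $s\mapsto 1/s$ around this segment.

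First I would analyze the behaviour of $\sigma$ near $[x_1,1)$. For $x$ approaching a point of the open segment $(x_1,1)$ from $\mathcal{S}_1^+$, the quantity under the square root in \eqref{eq:inverse_mapping_of_uniformization} is real, and one checks (using $\mu(x)=\frac{\mu_0x-\mu_1}{2(x-1)}$) that $\mu(x)\in(-1,1)$ there, so $\mu(x)^2-1<0$ and $\sigma(x)$ takes values on the unit circle arc $\{e^{i\chi}:0<\chi<\theta/2\}$; the two sides of the slit correspond to the two square-root branches, i.e. to $\chi$ and $-\chi$, hence to $s$ and $1/s$ on the unit circle. Since $x(s)=x(1/s)$ by the involution in Lemma \ref{lem:unif}, the function $x$ — and therefore any function of $x$ alone, in particular $\omega\circ\sigma$ once we use $\omega(s)=\omega(1/s)=s^{\pi/\theta}+s^{-\pi/\theta}$ — takes the \emph{same} value on the two sides of the slit. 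Thus $\omega\circ\sigma$ is continuous across $(x_1,1)$, and by Morera's theorem (or the Schwarz-type reflection already implicit in the symmetry $\psi_1(x)=\overline{\psi_1(\bar x)}$ from Lemma \ref{lem:psi_{1}(0)}) it extends to an analytic function on a neighbourhood of $(x_1,1)$ inside $\mathcal{S}_1^+$. One then treats the endpoints: at $x=1$ the map $\sigma$ sends $1$ to $e^{i\theta/2}$ (a boundary point of $\mathcal{E}^+$ on the unit circle), and $\omega$ sends $e^{i\theta/2}$ to $e^{i\pi/2}+e^{-i\pi/2}=0\in i\mathbb{R}\subset\partial\mathcal{H}^+$; at $x=x_1$ the discriminant $d$ vanishes, $\sigma(x_1)=1$, and $\omega(1)=2$, the tip of the removed half-line. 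So after removing the slit the image becomes exactly $\mathcal{H}^+\setminus[2,\infty)$ together with both shores of $[2,\infty)$, i.e. all of $\mathcal{H}^+$.

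Next I would assemble these local facts into the global statement. The extended map $F:=\omega\circ\sigma$ is analytic on all of $\mathcal{S}_1^+$ (the only potential trouble, the slit, having been removed; the endpoints $x_1,1$ lie on $\partial\mathcal{S}_1^+$ or are handled by boundedness and the removable-singularity theorem since $F$ is bounded near them). It is proper as a map $\mathcal{S}_1^+\to\mathcal{H}^+$: a sequence $x_n\to\partial\mathcal{S}_1^+$ forces $\sigma(x_n)\to\partial\mathcal{E}^+=\mathbb{R}_+\cup e^{i\theta/2}\mathbb{R}_+$, and on either ray $\omega$ sends points to $\partial\mathcal{H}^+=i\mathbb{R}$ or to $\infty$. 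A proper holomorphic map between domains has a well-defined finite degree $k\geq 1$, and counting preimages of a single interior point of $\mathcal{H}^+\setminus[2,\infty)$ — which by the already-proven conformality of $F$ on $\mathcal{S}_1^+\setminus[x_1,1]$ has exactly one preimage — gives $k=1$. Hence $F$ is injective, and a proper injective holomorphic map onto a domain is a conformal bijection, which is the claim.

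The main obstacle will be the bookkeeping at the slit: one must verify carefully that the two boundary values of $\sigma$ on $(x_1,1)$ are precisely $s$ and $1/s$ with $|s|=1$ (so that the involution $x(s)=x(1/s)$ and $\omega(s)=\omega(1/s)$ genuinely identify the two shores), and that no branch-cut ambiguity of the square root in \eqref{eq:inverse_mapping_of_uniformization} sneaks in an extra jump. Once the slit is seen to be removable and the two shores of $[x_1,1)$ map to the two shores of $[2,\infty)$, the rest is the standard "proper $+$ degree one $\Rightarrow$ biholomorphic'' argument and is routine.
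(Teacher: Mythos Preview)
Your high-level strategy is valid, but it contains a concrete geometric error and is considerably more elaborate than the paper's argument.

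The error: for $x\in(x_1,1)$ one has $\mu(x)\geq 1$, not $\mu(x)\in(-1,1)$. Indeed, the paper records just before the lemma that $x(s)$ maps each of $[0,1]$ and $[1,\infty)$ one-to-one onto $[x_1,1]$; hence the two preimages of a point $x\in(x_1,1)$ are reciprocal \emph{real} numbers $s\in(0,1)$ and $1/s>1$, giving $2\mu(x)=s+1/s>2$. So the boundary values of $\sigma$ on the two sides of the slit lie on $\mathbb{R}_+$, not on a unit-circle arc, and your description of the endpoint $x=1$ should be adjusted accordingly. Fortunately this does not break your argument: the identification of the two shores still rests only on the involution $s\leftrightarrow 1/s$ together with $\omega(s)=\omega(1/s)$, which remain correct.

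The paper's route is much shorter and avoids Morera, properness and degree altogether. Because $\sigma+1/\sigma=2\mu$, one has the closed form $\omega\circ\sigma=2T_{\pi/\theta}\circ\mu$ (this is exactly Proposition~\ref{prop:expression_conformal_map_small_jumps}), so analyticity on all of $\mathcal{S}_1^+$ is immediate from the analyticity of the generalised Chebyshev function and of the rational map $\mu$. Global injectivity is then checked by hand: on $\mathcal{S}_1^+\setminus[x_1,1]$ the map is already known to be univalent with image $\mathcal{H}^+\setminus[2,\infty)$; on the slit, the branch $\sigma=\mu+\sqrt{\mu^2-1}$ sends $[x_1,1)$ bijectively to $[1,\infty)$, and $\omega$ sends $[1,\infty)$ bijectively to $[2,\infty)$. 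Since the two images are disjoint, injectivity on $\mathcal{S}_1^+$ follows, and with it the conformal bijection onto $\mathcal{H}^+$. Your approach trades this two-line injectivity check for a properness-and-degree argument; it works, but the paper's observation that analyticity comes for free from the Chebyshev representation is the efficient shortcut here.
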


\begin{proof}
By the specific form of $\omega\circ\sigma$, we know that $\omega\circ\sigma$ is analytic in $\mathcal{S}_1^+$. Moreover, since $\omega\circ\sigma$ is univalent on $\mathcal{S}_1^+\setminus [x_1,1]$, then it remains to prove that $\omega\circ\sigma$ is injective on $[x_1,1)$. This is true since $\sigma$ (with suitable branch) maps one-to-one $[x_1,1)$ onto $[1,\infty)$ and $\omega$ maps one-to-one $[1,\infty)$ onto $[2,\infty)$. Therefore, $\omega\circ\sigma$ is univalent on $\mathcal{S}_1^+$. The proof is complete.
\end{proof}

\begin{proof}[End of the proof of Proposition \ref{prop:expression_conformal_map_small_jumps}]
We use Lemma \ref{lem:bef_conc} together with the expressions for $\omega(s)$ and $s(x)$.
\end{proof}

\subsubsection*{Comparison between the analytic approaches of \cite{CoBo-83} and \cite{FaIaMa-17,Ra-14}}

Both approaches start with the same functional equation \eqref{eq:functional_equation}, and, as a second step, introduce subsets of $\mathbb C^2$ where this equation may be evaluated. The approaches differ in the choice of these subsets: 
\begin{itemize}
  \item In \cite{CoBo-83} (which we choose to follow in the present work), this subset is chosen to be $\mathcal K$ in \eqref{eq:main_domain} (where we recall that $\vert x\vert=\vert y\vert\leq 1$);
  \item On the other hand, the set in \cite{FaIaMa-17} is $x\in [x_1,1]$ (then $y=Y(x)$ is a solution to the kernel equation, and $x_1$ is the branch point introduced in the previous section).
\end{itemize}
In both cases, the method continues by stating (and solving) a BVP for the generating functions on curves obtained from the subsets above.

This short recap shows that the unique, but major difference in the two approaches lies in the choice of the domain of evaluation. Both choices are equally natural for small step random walks. However, the main advantage of the choice of \cite{CoBo-83} is that the domain $\mathcal K$ may be defined without any difficulty for models admitting arbitrary big negative jumps, as in our paper. On the contrary, we did not find any canonical way to extend the definition of the segment $[x_1,1]$ for large step models\footnote{For small step models, there is only one branch point interior to the unit disk (see again the previous section), so $x_1$ appears as the only possible choice. However, the number of branch points being increasing with the amplitude of the big jumps, it is not clear at all in general what segment, or what union of segments might replace $[x_1,1]$ in general.}. Let us also underline the $x\leftrightarrow y$ symmetry of the domain $\mathcal K$, while this symmetry is broken when taking $x\in [x_1,1]$ (indeed $x$ is then real and $y$ becomes non-real).

Finally, as shown above, in the case of small steps, the two approaches are very similar: our curve $\mathcal S_1$ corresponds to the line $e^{i\frac{\theta}{2}}\mathbb R_+$, while the curve $[x_1,1]$ would correspond to $\mathbb R_+$; one passes from one curve to the other simply by multiplying by a complex number.

\subsection{A family of random walks with larger steps}
\label{sec:larger_jumps}

Consider the model whose jumps and weights are given by
\begin{equation}
\label{eq:jumps_expression_mapping_fam_ex}
   p_{k,\ell}=\left\{\begin{array}{ll}
   z & \text{if } (k,\ell)=(1,1),\\
   z_r & \text{if } k+\ell+r=0,
   \end{array}\right.
\end{equation}
where the $z_r$ satisfy (so that the $p_{k,\ell}$ are transition probabilities summing to $1$)
\begin{equation}
\label{eq:condition1_zr}
   z+\sum_{r} (r+1)z_r=1
\end{equation}
and (so as to have a zero drift, see our hypothesis \ref{H4:jumps} in Section~\ref{sec:introduction})
\begin{equation}
\label{eq:condition2_zr}
   z=\sum_{r} z_r \frac{r(r+1)}{2}.
\end{equation}
See Figure~\ref{fig:large_jumps_example}. For example, choosing $z=z_1=\frac{1}{3}$ and all other $z_r=0$ leads to Kreweras' step set $\{(1,1), (-1,0), (0,-1)\}$ with uniform weights.

Let us remark that the model \eqref{eq:jumps_expression_mapping_fam_ex} is not always irreducible. More precisely, it is reducible if and only if all steps $(k,\ell)$ have even size (by which we mean that all coordinate sums $k+\ell$ are even). Although our irreducibility assumption \ref{H3:jumps} is not satisfied in general, we will show how to apply our main results in this slightly modified framework.

Our motivation to look at this particular family of model comes from bipolar orientations on planar maps, which, as shown in the papers \cite{KeMiSh-19} and \cite{BMFuRa-20}, are in close correspondence with the model of walks confined to the first quadrant as on the right of Figure \ref{fig:large_jumps_example}. Then our model (on the left on the same picture) is just obtained through a horizontal symmetry, so as in particular to have a model symmetric in the first diagonal. Because of the connection with bipolar orientations, both models represented on Figure~\ref{fig:large_jumps_example} admit a very strong structure, which the results in this section will also illustrate.

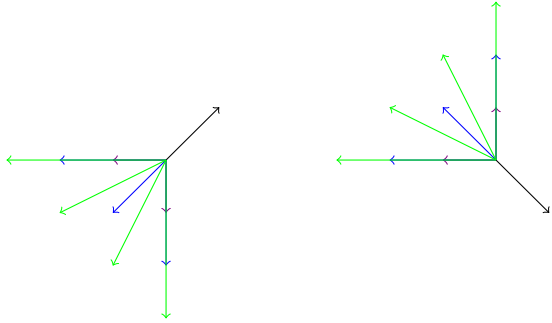
\begin{figure}
\begin{center}
   \begin{tikzpicture}[scale=.7] 
    \draw[->,white] (-3,-3) -- (-3,3);
    \draw[->,white] (3,-3) -- (-3,3);
    \draw[->,black] (0,0) -- (1,1);
    \draw[->,violet] (0,0) -- (0,-1);
    \draw[->,violet] (0,0) -- (-1,0);
    \draw[->,blue] (0,0) -- (-2,0);
    \draw[->,blue] (0,0) -- (0,-2);
    \draw[->,blue] (0,0) -- (-1,-1);
    \draw[->,green] (0,0) -- (-3,0);
    \draw[->,green] (0,0) -- (-2,-1);
    \draw[->,green] (0,0) -- (-1,-2);
    \draw[->,green] (0,0) -- (-0,-3);
  \end{tikzpicture}
     \begin{tikzpicture}[scale=.7] 
    \draw[->,white] (-3,-3) -- (-3,3);
    \draw[->,white] (3,-3) -- (-3,3);
    \draw[->,black] (0,0) -- (1,-1);
    \draw[->,violet] (0,0) -- (0,1);
    \draw[->,violet] (0,0) -- (-1,0);
    \draw[->,blue] (0,0) -- (-2,0);
    \draw[->,blue] (0,0) -- (0,2);
    \draw[->,blue] (0,0) -- (-1,1);
    \draw[->,green] (0,0) -- (-3,0);
    \draw[->,green] (0,0) -- (-2,1);
    \draw[->,green] (0,0) -- (-1,2);
    \draw[->,green] (0,0) -- (-0,3);
  \end{tikzpicture}
  \end{center}
  \caption{On the left: the model studied in Section \ref{sec:larger_jumps}. It is inspired by a similar (non-symmetric) model analyzed in \cite{KeMiSh-19,BMFuRa-20}, displayed on the right picture.}
  \label{fig:large_jumps_example}
\end{figure}

Introduce
\begin{equation}
\label{eq:expression_rho}
   \rho(x)=z\sqrt{\frac{I_0(x)-I_0(t)}{I_0(x)-I_0(1)}},
\end{equation}
where the function $I_0$, introduced in \cite{BMFuRa-20}, takes the value
\begin{equation*}
   I_0(x)=x+\frac{z}{x}-\sum z_r x^{r+1},
\end{equation*}
where $t$ is the unique real point of $\mathcal S_1$ apart from $1$, and where the branch cut of the square root function in \eqref{eq:expression_rho} is chosen on $\mathbb{R}_-$. Equivalently, $t$ is characterized by 
\begin{equation}
\label{eq:characterization_t}
   I_0'(t)=0 \quad \text{and} \quad t\in (-1,1),
\end{equation}
as it easily follows from \eqref{eq:K(x,x)} below. Our main result is the following:
\begin{proposition}
\label{prop:expression_mapping_fam_ex}
The map $\rho$ in \eqref{eq:expression_rho} is a conformal map from $\mathcal S_1^+$ to $\mathcal{H}^+$, sending $0$ to~$z$.
\end{proposition}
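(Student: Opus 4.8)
The plan is to realise $\rho$ as a composition of conformal bijections, the only non-elementary ingredient being the uniformising map furnished by $I_0$ itself.

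I would begin with the algebraic observation that for the step set \eqref{eq:jumps_expression_mapping_fam_ex} the kernel factors as
\[
   K(x,y)=\frac{xy\bigl(I_0(x)-I_0(y)\bigr)}{x-y},
\]
a direct computation from $\sum_{i+j=r,\,i,j\ge0}x^iy^j=\frac{x^{r+1}-y^{r+1}}{x-y}$ and the definition of $I_0$. In particular $K(x,x)=x^2I_0'(x)$, so the real points $1$ and $t$ of $\mathcal S_1$ are exactly the zeros of $I_0'$ in $[-1,1]$; moreover, evaluating $I_0'$ and $I_0''$ at $x=1$ and using the normalisation $z+\sum_r(r+1)z_r=1$ together with the zero-drift relation $z=\sum_rz_r\frac{r(r+1)}{2}$ gives $I_0'(1)=I_0''(1)=0$ and $I_0'''(1)\neq0$, while $I_0''(t)\neq0$ (a by-product, also readable off \eqref{eq:angle_at_1}, is that $\theta=2\pi/3$ for this whole family). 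Since $(x,\overline x)\in\mathcal K$ for $x\in\mathcal S_1$ and $I_0$ has real coefficients, $I_0$ is real-valued on $\mathcal S_1$; and since for this kernel a point $(x,y)\in V[K]$ with $x\neq y$ and $x,y\neq0$ is singular iff $I_0'(x)=I_0'(y)=0$, Lemma~\ref{lemma:angle_at_1} forces $I_0'\neq0$ on $\mathcal S_1\setminus\{1,t\}$. Finally $0\in\mathcal S_1^+$ by Lemma~\ref{lemma: properties of S1 S2'} (here $p_{1,1}=z>0$), and $I_0$ has a simple pole there.

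The core step is to show that $I_0$ maps $\mathcal S_1^+$ conformally onto the slit sphere $(\mathbb C\cup\{\infty\})\setminus[I_0(t),I_0(1)]$, with $0\mapsto\infty$. Since $\mathcal S_1$ is a Jordan curve symmetric about $\mathbb R$ meeting $\mathbb R$ only at $1$ and $t$, it splits into two conjugate arcs, and on each of them $I_0$ is real, continuous and of nonvanishing derivative, hence a strictly monotone homeomorphism onto the (non-degenerate) segment $[I_0(t),I_0(1)]$; thus $I_0(\mathcal S_1)=[I_0(t),I_0(1)]$ and the closed curve $I_0\circ\mathcal S_1$ traverses this segment forth and back, so it has winding number $0$ about every $w\notin[I_0(t),I_0(1)]$. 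The argument principle applied to $I_0-w$ on $\mathcal S_1^+$, taking into account the single simple pole of $I_0$ at $0$, then yields exactly one zero (counted with multiplicity) for each $w\in(\mathbb C\cup\{\infty\})\setminus[I_0(t),I_0(1)]$; in particular $I_0$ has no critical point in $\mathcal S_1^+$. It remains to see that $I_0(\mathcal S_1^+)$ is disjoint from the slit: if $I_0(x_0)$ belonged to $[I_0(t),I_0(1)]$ for some $x_0\in\mathcal S_1^+$ with $\Im x_0>0$, then the harmonic function $\Im I_0$ on $\mathcal S_1^+\cap\{\Im z>0\}$ — which vanishes on the bounding arc and on $(t,1)$, and tends to $-\infty$ at the puncture $0$ (its singular part being $z/x$) — would attain an interior maximum equal to $0$, forcing $I_0$ to be constant; the case $x_0\in(t,1)$ is excluded by a local multiplicity count at $x_0$ against the unique preimage given by the argument principle, with the endpoint values $I_0(t)$ and $I_0(1)$ handled using that $I_0$ is injective on a one-sided neighbourhood of the smooth boundary point $t$, resp.\ of the $2\pi/3$-corner $1$. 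Hence $I_0\colon\mathcal S_1^+\to(\mathbb C\cup\{\infty\})\setminus[I_0(t),I_0(1)]$ is a conformal bijection.

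The conclusion is then elementary. The Möbius transformation $M(w)=\frac{w-I_0(t)}{w-I_0(1)}$ sends $I_0(t)\mapsto0$, $I_0(1)\mapsto\infty$, the segment $(I_0(t),I_0(1))\mapsto(-\infty,0)$ and $\infty\mapsto1$, hence maps $(\mathbb C\cup\{\infty\})\setminus[I_0(t),I_0(1)]$ conformally onto $\mathbb C\setminus(-\infty,0]$; the principal square root (cut on $\mathbb R_-$, as prescribed in \eqref{eq:expression_rho}) maps $\mathbb C\setminus(-\infty,0]$ conformally onto $\mathcal H^+$; and multiplication by $z>0$ is an automorphism of $\mathcal H^+$. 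Composing these, $\rho(x)=z\,\sqrt{M(I_0(x))}$ is a conformal bijection from $\mathcal S_1^+$ onto $\mathcal H^+$ — in particular the branch of the square root in \eqref{eq:expression_rho} is globally well defined on $\mathcal S_1^+$, its argument never meeting $\mathbb R_-$ — and $\rho(0)=z\sqrt{M(\infty)}=z\sqrt1=z$ because $I_0(x)\to\infty$ as $x\to0$. The main obstacle throughout is the core step, and more precisely the verification that $I_0(\mathcal S_1^+)$ does not meet the slit $[I_0(t),I_0(1)]$; one should also observe at the outset that in the reducible instances of \eqref{eq:jumps_expression_mapping_fam_ex} one first passes to the sublattice on which the walk is irreducible, so that the results of Section~\ref{sec:func_eq} apply verbatim.
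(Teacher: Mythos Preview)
Your overall architecture — factor $K$ through $I_0$, show that $I_0$ uniformises $\mathcal S_1^+$ onto the slit sphere $\mathbb P^1\setminus[I_0(t),I_0(1)]$, then compose with a M\"obius map and a square root — is exactly the paper's. The paper packages the first two steps as a separate lemma (Lemma~\ref{lem:prelim_prop:expression_mapping_fam_ex}), and then concludes not by proving that $I_0$ is itself a bijection but by showing that $\rho$ extends continuously and injectively to the boundary $\mathcal S_1\to i\mathbb R$, from which conformality follows. Your use of the argument principle to get bijectivity of $I_0$ directly is a legitimate alternative and arguably tidier.

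Where your write-up falters is precisely the step you flag as the main obstacle, showing that $I_0(\mathcal S_1^+)$ misses the slit. First, the claim that $\Im I_0$ ``tends to $-\infty$ at the puncture $0$'' is false: along $x=\epsilon+i\epsilon^3$ one has $\Im(z/x)\sim -z\epsilon\to0$, so $\Im I_0$ merely stays bounded above. One can salvage your maximum-principle argument with an extended (Phragm\'en--Lindel\"of type) version using only that bound, but you have not done so, and your treatment of the real case $x_0\in(t,1)$ via a ``local multiplicity count'' and separate endpoint analysis is vague. The paper's arguments here are both one-liners and worth knowing. For $x_0\in\mathcal S_1^+\setminus\mathbb R$: if $I_0(x_0)\in\mathbb R$ then $I_0(x_0)=\overline{I_0(x_0)}=I_0(\overline{x_0})$, hence $K(x_0,\overline{x_0})=0$ with $|x_0|=|\overline{x_0}|<1$, which forces $x_0\in\mathcal S_1$, a contradiction. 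For $x_0\in(t,1)\setminus\{0\}$: since $K(x,x)=x^2I_0'(x)<0$ there, $I_0$ is strictly decreasing on each of $(t,0)$ and $(0,1)$, with a simple pole at $0$, so $I_0((t,0)\cup(0,1))=\mathbb R\setminus[I_0(t),I_0(1)]$ automatically.
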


The remainder of this section has the following structure: we will first provide the proof of Proposition \ref{prop:expression_mapping_fam_ex}, and then give a few consequences of it.

\subsubsection*{Proof of Proposition \ref{prop:expression_mapping_fam_ex}}

We start by noticing that for $x\not=y$,
\begin{align*}
K(x,y)&=xy-z-xy\sum z_r\frac{y^{r+1}-x^{r+1}}{y-x}\\
&=\frac{xy}{y-x}\left(y-x-\frac{z}{x}+\frac{z}{y}-\sum z_r(y^{r+1}-x^{r+1})\right)\\&=\frac{xy}{y-x}\bigl(I_0(y)-I_0(x)\bigr),
\end{align*}
and for $x=y$
\begin{equation}
\label{eq:K(x,x)}
   K(x,x)=x^2-z-\sum (r+1)z_rx^{r+2}=x^2I_0'(x).
\end{equation}
Denote $\mathbb{P}^1=\mathbb C\cup\{\infty\}$.

\begin{lemma}
\label{lem:prelim_prop:expression_mapping_fam_ex}
If $(x,y)\in\mathcal{K}$, then $I_0(x)=I_0(y)\in\mathbb{R}$, and the map $(x,y)\mapsto I_0(x)$ is two-to-one from $\mathcal{K}$ to an interval $[a,b]$ of $\mathbb{R}$, with $a=I_0(t)<0$ and $b=I_0(1)>0$. Moreover, $I_0$ maps the curve $\mathcal S_1^+$ onto $\mathbb{P}^1\setminus [a,b]$.
\end{lemma}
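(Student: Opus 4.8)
I will use the two factorisations of the kernel established just above: for $x\neq y$ (and $x,y\neq 0$) one has $K(x,y)=\frac{xy}{y-x}\bigl(I_0(y)-I_0(x)\bigr)$, and $K(x,x)=x^2I_0'(x)$. Since $p_{1,1}=z\neq 0$, Lemma~\ref{lemma:kernel}, Lemma~\ref{lemma: properties of S1 S2'} and the description of $\mathcal K$ recalled right after Lemma~\ref{lemma:kernel} give: $\eta(s)>0$ and $\eta(\bar s)=\eta(s)$ for all $\lvert s\rvert=1$; the map $s\mapsto x(s):=\eta(s)s$ is one-to-one from $\mathcal C$ onto $\mathcal S_1$; $0\in\mathcal S_1^+$; and $\mathcal K=\{(x(s),\overline{x(s)}):\lvert s\rvert=1\}$. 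In particular $\lvert x\rvert=\eta(s)\leq 1$ on $\mathcal S_1$ with equality only at $x=1$, and since $\operatorname{Im} x(e^{i\lambda})=\eta(e^{i\lambda})\sin\lambda$, the only real points of $\mathcal S_1$ are $x(1)=1$ and $x(-1)=-\eta(-1)=:t\in(-1,0)$. (If the model is reducible because all its steps have even coordinate sum, one first passes to the sublattice on which it is irreducible; I suppress this standard reduction.) \emph{Step 1.} If $(x,y)\in\mathcal K$ then $y=\bar x\neq 0$; if $x\neq y$ the first factorisation forces $I_0(x)=I_0(y)$, and if $x=y$ this is trivial; as $I_0$ has real coefficients, $I_0(y)=I_0(\bar x)=\overline{I_0(x)}$, so $I_0(x)=I_0(y)\in\mathbb R$.

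\emph{Step 2: the map is two-to-one onto $[a,b]$.} Through the bijection $s\mapsto(x(s),\overline{x(s)})$ between $\mathcal C$ and $\mathcal K$ it suffices to study $\varphi(\lambda):=I_0(x(e^{i\lambda}))$ on $[0,2\pi)$. It is continuous, and $\varphi(2\pi-\lambda)=\varphi(\lambda)$ because $x(e^{-i\lambda})=\overline{x(e^{i\lambda})}$ (using $\eta(\bar s)=\eta(s)$) and $I_0$ is real-valued on $\mathcal S_1$ by Step~1; so it is enough to show $\varphi$ is strictly monotone on $[0,\pi]$, with $\varphi(0)=I_0(1)=:b$ and $\varphi(\pi)=I_0(t)=:a$. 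On $(0,\pi)$ the chain rule gives $\varphi'(\lambda)=I_0'(x(e^{i\lambda}))\,\tfrac{d}{d\lambda}x(e^{i\lambda})$, and $\tfrac{d}{d\lambda}x(e^{i\lambda})\neq 0$ because $\mathcal S_1$ is smooth away from its corner at $1$ (Lemma~\ref{lemma:angle_at_1}). The decisive point is that $I_0'(x)\neq 0$ for every $x\in\mathcal S_1\setminus\{1,t\}$: if $I_0'(x)=0$ then $K(x,x)=x^2I_0'(x)=0$ and $\lvert x\rvert\leq 1$, hence $(x,x)\in\mathcal K$; but every point of $\mathcal K$ is of the form $(x(s),\overline{x(s)})$, so $x=\bar x$, i.e.\ $x\in\{1,t\}$. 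Thus $\varphi'\neq 0$ on $(0,\pi)$, $\varphi$ is strictly monotone on $[0,\pi]$, and by the intermediate value theorem $\varphi([0,\pi])=[a,b]$; together with $\varphi(2\pi-\lambda)=\varphi(\lambda)$ this makes $(x,y)\mapsto I_0(x)$ two-to-one from $\mathcal K$ onto $[a,b]$, one-to-one over the endpoints $a,b$, which come from the fixed points $(t,t),(1,1)$ of $\mathcal K$.

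\emph{Step 3: the signs, and the surjectivity.} From $z+\sum_r(r+1)z_r=1$ one gets $\sum_r z_r=1-z-\sum_r rz_r$, hence $b=I_0(1)=1+z-\sum_r z_r=2z+\sum_r rz_r\geq 2z>0$. For $a$: multiplying $I_0'(t)=0$ by $t^2>0$ gives $z=t^2-\sum_r(r+1)z_rt^{r+2}>0$, so $\sum_r(r+1)z_rt^{r}<1$; moreover $\sum_r z_rt^{r}\leq\sum_r z_r\lvert t\rvert^{r}\leq\sum_r z_r\leq 1-z<1$; and multiplying $I_0'(t)=0$ by $t$ and substituting into $I_0(t)$ yields $a=t\bigl(2-\sum_r(r+2)z_rt^{r}\bigr)$ with $\sum_r(r+2)z_rt^{r}=\sum_r(r+1)z_rt^{r}+\sum_r z_rt^{r}<2$, so $a<0$ since $t<0$. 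Finally, $I_0$ is meromorphic on $\mathbb C$ with a single, simple pole, at $0\in\mathcal S_1^+$; hence $I_0$ is holomorphic near $\mathcal S_1$ and, by Steps~1--2, sweeps $[a,b]$ back and forth along $\mathcal S_1$, so the closed curve $I_0(\mathcal S_1)$ has winding number $0$ about every $w\in\mathbb C\setminus[a,b]$. Applying the argument principle to $I_0-w$ on the positively oriented $\mathcal S_1$ gives $(\#\text{zeros in }\mathcal S_1^+)-(\#\text{poles in }\mathcal S_1^+)=0$; as $I_0$ has exactly one (simple) pole in $\mathcal S_1^+$, the equation $I_0(z)=w$ has exactly one root in $\mathcal S_1^+$. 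Together with $I_0^{-1}(\infty)=\{0\}$, this shows $I_0$ is a conformal bijection of $\mathcal S_1^+$ onto $\mathbb P^1\setminus[a,b]$, in particular onto it.

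\textbf{Main obstacle.} The heart of the argument is the non-vanishing of $I_0'$ along $\mathcal S_1$ away from the two real points $1,t$ in Step~2: it rests on the identity $K(x,x)=x^2I_0'(x)$ combined with the rigidity of $\mathcal K$ (its points are exactly the conjugate pairs $(x(s),\overline{x(s)})$), which forces the only ``fold points'' of $I_0$ on $\mathcal S_1$ to be its real intersection points, and this is precisely what makes $\varphi$ monotone. The sign analysis $a<0$ in Step~3, though elementary, is the only place where the probability and zero-drift constraints on $(z,z_r)$ are genuinely used; the surjectivity onto $\mathbb P^1\setminus[a,b]$ is then a routine argument-principle count.
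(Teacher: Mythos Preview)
Your Steps~1--2 and the sign computations for $a<0<b$ are correct and, in places, more explicit than the paper's (which only sketches $I_0(t)<0$). Your Step~2 argument that $I_0'$ cannot vanish on $\mathcal S_1\setminus\{1,t\}$ is actually cleaner than the paper's: the paper counts roots of $K(x,x)$ in the unit disk via Rouch\'e, while you observe directly that $K(x,x)=0$ with $\vert x\vert\leq 1$ forces $(x,x)\in\mathcal K$, hence $x=\overline{x}$. Your use of the argument principle for the surjectivity onto $\mathbb P^1\setminus[a,b]$ is also a pleasant variant of the paper's open-mapping argument.

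There is, however, a genuine gap in the last sentence of Step~3. From ``$I_0(z)=w$ has exactly one root in $\mathcal S_1^+$ for every $w\in\mathbb P^1\setminus[a,b]$'' you conclude that $I_0$ is a bijection of $\mathcal S_1^+$ \emph{onto} $\mathbb P^1\setminus[a,b]$. This inference is not valid: the argument principle only counts preimages of points $w\notin[a,b]$ and says nothing about whether $I_0$ also attains values in $[a,b]$ on $\mathcal S_1^+$. (Compare $f(z)=z$ on $\mathbb C$: every $w\neq 0$ has exactly one preimage, yet $f$ is not a bijection onto $\mathbb C\setminus\{0\}$.) The lemma asserts $I_0(\mathcal S_1^+)=\mathbb P^1\setminus[a,b]$, and the inclusion $I_0(\mathcal S_1^+)\subset\mathbb P^1\setminus[a,b]$ is exactly what is used afterwards to define $\rho$ as a map into $\mathcal H^+$.

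The missing inclusion is easy to supply. The paper's route is the shortest: if $x\in\mathcal S_1^+\setminus\mathbb R$ had $I_0(x)\in\mathbb R$, then $I_0(\overline{x})=\overline{I_0(x)}=I_0(x)$, so $K(x,\overline{x})=0$ with $\vert x\vert=\vert\overline{x}\vert<1$, forcing $(x,\overline{x})\in\mathcal K$ and $x\in\mathcal S_1$, a contradiction; and on $\mathbb R\cap\mathcal S_1^+=(t,1)$ one has $K(x,x)<0$, hence $I_0'(x)<0$, so $I_0((t,0))=(-\infty,a)$ and $I_0((0,1))=(b,+\infty)$. Alternatively, within your framework: if some $z_0\in\mathcal S_1^+$ had $I_0(z_0)=w_0\in[a,b]$, then for $w$ close to $w_0$ with $w\notin[a,b]$ there would be a preimage near $z_0$ \emph{and} another preimage in $\mathcal S_1^+$ near the boundary point $\xi\in\mathcal S_1$ with $I_0(\xi)=w_0$ (since $I_0$ is conformal there and maps the $\mathcal S_1^+$-side of $\mathcal S_1$ onto one side of $[a,b]$), contradicting $N(w)=1$. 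Either patch completes your proof.
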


\begin{proof}
Note first that if $x=y$, then the condition $x=\overline{y}$ implied by the symmetry of $K$ yields $x=\pm1$, which implies $I_0(x)\in\mathbb{R}$. The point $(1,1)$ always belongs to $\mathcal{K}$, and so does $(-1,-1)$ if and only if the walk is reducible (which, we recall, happens when all steps have even size).

If $x\not=y$, the above expression of $K$ yields $I_0(x)=I_0(y)$. Since $I_0$ has real coefficients and $y=\overline{x}$, we also have $I_0(x)=I_0(\overline{x})=\overline{I_0(x)}$, and $I_0(x)\in \mathbb{R}$.

By the equality $K(x,x)=x^{2}I_0'(x)$, see \eqref{eq:K(x,x)}, we get that $I'_0(x)=0$ if and only if $K(x,x)=0$. By Rouch\'e's theorem, for $h\in(0,1)$, the map
\begin{equation*}
   K_{h}(x)=x^2-h\bigl(z+\sum (r+1)z_rx^{r+2}\bigr)
\end{equation*}
has exactly two zeros in the unit disk, which are real. Hence, letting $h$ go to one yields that $x\mapsto K(x,x)$ has at most two real zeros in the unit disk, plus eventual additional zeros on the unit circle. Moreover, $K(x,x)$ cannot vanish on the unit circle except at $-1,1$; this follows from \ref{H3:jumps} in the irreducible case, and would follow from a similar argument in the reducible case. By the study of $x\mapsto K(x,x)$ on $[-1,1]$ done in Section \ref{sec:func_eq}, there are only two zeros at the two real points of $\mathcal S_1$, one being $1$ and the other one being strictly negative (since $p_{1,1}>0$). Hence, $I'_0(x)$ vanishes only at the two real points of $\mathcal S_1$. Since $I_0(x)=I_0(\overline{x})$, we deduce that $I_0$ is two-to-one from $\mathcal S_1$ to an interval $[a,b]$ of $\mathbb{R}$. Since $z>\sum z_r$, we deduce that $I_0(1)>0$ and $I_0(t)<0$, where $t$ is the unique real point of $\mathcal S_1$ apart from $1$ (recall that $t$ is negative by Lemma \ref{lemma: properties of S1 S2'}).

Since $I_0$ is analytic on $\mathcal S_1^+$ and continuous on the closure $\overline{\mathcal S_1^+}$, 
\begin{equation*}
   \partial \bigl(I_0(\mathcal S_1^+)\bigr)\subset I_0(\partial \mathcal S_1^+)=I_0(\mathcal S_1)=[I_0(t),I_0(1)]=[a,b].
\end{equation*}
Hence, $\mathbb{P}^1\setminus [a,b]\subset I_0(\mathcal S_1^+)$. Let $x\in \mathcal S_1^+\setminus \mathbb{R}$. Then, $I_0(x)\not\in[a,b]$, for otherwise $K(x,\overline{x})=0$ which would contradict the fact that $x\not\in \mathcal S_1$. On $\mathbb{R}\cap \mathcal S_1^+$, $x^2I_0'(x)=K(x,x)<0$, thus $I_0$ is increasing. Since $I_0(0)=\infty$, we deduce that $I_0(\mathbb{R}\cap \mathcal S_1^+)=(\mathbb{R}\setminus[a,b])\cup\{\infty\}$, and thus $I_0(\mathcal S_1^+)=\mathbb{P}^1\setminus [a,b]$.
\end{proof}

\begin{proof}[End of the proof of Proposition \ref{prop:expression_mapping_fam_ex}]
First, we directly check that $\rho(0)=z\sqrt{1}=z$. Further, since
\begin{equation*}
   z\mapsto \sqrt{\frac{z-a}{z-b}}
\end{equation*}
maps conformally $\mathbb{P}^1\setminus[a,b]$ to the right half-plane $\mathcal{H}^+$, we deduce from Lemma \ref{lem:prelim_prop:expression_mapping_fam_ex} that $\rho$ in \eqref{eq:expression_rho} is analytic from $\mathcal S_1^+$ to $\mathcal{H}^+$. As a second step, we will prove that $\rho$ is continuous and one-to-one from $\mathcal S_1$ to $[a,b]$. As a direct consequence, $\rho$ will be conformal from $\mathcal S_1^+$ to $\mathcal{H}^+$, thereby concluding the proof of Proposition \ref{prop:expression_mapping_fam_ex}.

We thus show that $\rho$ extends by continuity to an injective map from $\mathcal S_1$ to $[a,b]\subset\mathbb{R}$. Suppose that $\{x_n\}_{n\geq 1}$ converges to $\xi\in \mathcal S_1$ with $\Im\xi>0$. Then, by continuity of the function $I_0$, $I_0(x_n)\rightarrow I_0(\xi)$. Since $I_0'(x)<0$ for $x\in [t,1]$ and $I_0([t,1])\subset\mathbb{R}$, we deduce that $\Im I_0(x)<0$ for $x$ in a neighbourhood of $[1,t]$ in $i\mathcal{H}^+$. Taking into account that $I_0(\mathcal S_1^+\cap i\mathcal{H}^+)\subset \mathbb{C}\setminus \mathbb{R}$ (for otherwise it would meet $\mathcal S_1$), we have $I_0(\mathcal S_1^+\cap i\mathcal{H}^+)\subset -i\mathcal{H}^+$. Hence, $I_0(x_n)\rightarrow I_0(\xi)\in [a,b]$ while staying in $-i\mathcal{H}^+$. This implies that 
\begin{equation*}
   \frac{I_0(x_n)-a}{I_0(x_n)-b} =1+\frac{b-a}{I_0(x_n)-b}\to  \frac{I_0(\xi)-a}{I_0(\xi)-b},
\end{equation*}
while being after some rank in $i\mathcal{H}^+$. Thus, $\rho(x_n)$ goes towards $iz\sqrt{\frac{I_0(\xi)-a}{b-I_0(\xi)}}$, where $\sqrt{\frac{I_0(\xi)-a}{b-I_0(\xi)}}$ is the unique positive root of $X^2=\frac{I_0(\xi)-a}{b-I_0(\xi)}$.

Similarly, if $\{x_n\}_{n\geq 1}$ converges to $\xi\in \mathcal S_1$ with $\Im\xi<0$, then $\rho(x_n)$ goes to $-iz\sqrt{\frac{I_0(\xi)-a}{b-I_0(\xi)}}$. Hence, we can extend $\rho$ by continuity to $\mathcal S_1$ with the value 
\begin{equation*}
   \rho(\xi)=\pm iz \sqrt{\frac{I_0(\xi)-a}{b-I_0(\xi)}}\qquad \text{if }\xi\in \pm i\mathcal{H}^+.
\end{equation*}
Notice that the above formula still holds when $\xi\in\mathbb{R}$, with $\rho(1)=\infty$ and $\rho(t)=0$. Since $I_0$ is two-to-one from $\mathcal S_1$ to $[a,b]$ except at $t$ and $1$, we deduce that $\rho$ is injective on $\mathcal S_1$.
\end{proof}

\subsubsection*{Applications of Proposition \ref{prop:expression_mapping_fam_ex}}
Theorem \ref{thm:main_intro-1} directly yields the following explicit expression for the harmonic functions $h_n$ as introduced in \eqref{eq:expression_H_n}.
\begin{proposition}
\label{prop:expression_HF_family}
Let $t$ as in \eqref{eq:characterization_t}. Then, for each $n\geq 1$, the power series expansion at $(0,0)$ of the bivariate series
\begin{equation*}
   H_n(x,y)=\frac{(y-x)z^n\left(\left(\frac{I_0(x)-I_0(t)}{I_0(x)-I_0(1)}\right)^{n/2}+(-1)^{n-1}\left(\frac{I_0(y)-I_0(t)}{I_0(y)-I_0(1)}\right)^{n/2}\right)}{xy\bigl(I_0(y)-I_0(x)\bigr)}
\end{equation*}
defines a harmonic function $h_n$ for the Laplacian operator associated to the model \eqref{eq:jumps_expression_mapping_fam_ex}.
\end{proposition}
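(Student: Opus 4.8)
The proposition is a direct consequence of Theorem~\ref{thm:main_intro-1} and Proposition~\ref{prop:expression_mapping_fam_ex}, and the plan splits into three steps. First I would identify the conformal map $\psi_{1}$ appearing in \eqref{eq:expression_H_n} with the map $\rho$ of \eqref{eq:expression_rho}. For the model \eqref{eq:jumps_expression_mapping_fam_ex} one has $p_{1,1}=z$, with $0<z<1$, so $0\in\mathcal{S}_1^+$ and $\psi_{1}$ is normalized by $\psi_{1}(0)=p_{1,1}=z$ and $\psi_{1}'(0)>0$. Proposition~\ref{prop:expression_mapping_fam_ex} provides a conformal map $\rho:\mathcal{S}_1^+\to\mathcal{H}^+$ with $\rho(0)=z$; moreover $\rho$ is real on $\mathcal{S}_1^+\cap\mathbb{R}$ (since $I_0$ has real coefficients and the branch cut is taken on $\mathbb{R}_-$), sends $1$ to $\infty$ and $t$ to $0$, and, using $I_0(x)=z/x+O(x)$ near $0$, satisfies $\rho(x)=z+\tfrac{1}{2}(I_0(1)-I_0(t))\,x+o(x)$, so that $\rho'(0)=\tfrac{1}{2}(I_0(1)-I_0(t))>0$. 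By the uniqueness of such a conformal map up to a positive multiplicative constant (recorded in the proof of Lemma~\ref{lem:psi_{1}(0)}), we conclude $\psi_{1}=\rho$.

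Second, I would record two elementary identities. Squaring \eqref{eq:expression_rho} gives $\psi_{1}(x)^2=z^2\,\frac{I_0(x)-I_0(t)}{I_0(x)-I_0(1)}$, hence, with the branch of Proposition~\ref{prop:expression_mapping_fam_ex} (near $x=0$ the ratio is close to $1$, away from the cut $\mathbb{R}_-$), $\psi_{1}(x)^n=z^n\bigl(\frac{I_0(x)-I_0(t)}{I_0(x)-I_0(1)}\bigr)^{n/2}$ for every $n\geq 1$; and the kernel factorisation $K(x,y)=\frac{xy}{y-x}\bigl(I_0(y)-I_0(x)\bigr)$, already derived at the beginning of the proof of Proposition~\ref{prop:expression_mapping_fam_ex}. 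Substituting both into $\frac{\psi_{1}(x)^n-(-\psi_{1}(y))^n}{K(x,y)}$, and using $-(-1)^n=(-1)^{n-1}$, yields precisely the displayed expression for $H_n(x,y)$.

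Third, I would check that this bivariate series is analytic at $(0,0)$ and that its Taylor coefficients form a harmonic function. Analyticity is clear: $K(0,0)=z\neq 0$, while $\frac{I_0(x)-I_0(t)}{I_0(x)-I_0(1)}$ is analytic near $x=0$ with value $1$ there (as $I_0(0)=\infty$), so its $n/2$-th power is analytic near $0$, being valued off $\mathbb{R}_-$. For harmonicity, writing $X^n=\sum_{m\geq 0}\alpha_{n,m}P_m(X)$ in the basis $\{P_m\}$ of \eqref{eq:def_P_n} and applying this at $X=\psi_{1}(x)$ and $X=-\psi_{1}(y)$, the $m=0$ term cancels and $\psi_{1}(x)^n-(-\psi_{1}(y))^n=\sum_{m\geq 1}\alpha_{n,m}\bigl(P_m(\psi_{1}(x))-P_m(-\psi_{1}(y))\bigr)$; dividing by $K(x,y)$ shows that $H_n=\sum_{m\geq 1}\alpha_{n,m}H_m$ is a finite linear combination of the generating functions from \eqref{eq:expression_H_n}, so its coefficients are harmonic by Theorem~\ref{thm:main_intro-1}~\ref{thm:main_intro-1:it1}. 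Equivalently, one argues directly: the numerator of $H_n$ separates as a function of $x$ plus a function of $y$, hence $H_n=N(x,y)/K(x,y)$ with $N(x,y)=N(x,0)+N(0,y)-N(0,0)$, which is exactly the functional equation \eqref{eq:functional_equation}; thus the array $h_n$, extended by $0$ outside the quadrant, satisfies \ref{H1:harmonic}--\ref{H2:harmonic}, as in the proof of Theorem~\ref{thm:main_intro-1}~\ref{thm:main_intro-1:it1}.

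There is no real obstacle beyond Proposition~\ref{prop:expression_mapping_fam_ex}, which does all the analytic work. The only points requiring care are the normalization bookkeeping behind the identification $\psi_{1}=\rho$ (including the reducible case, where assumption~\ref{H3:jumps} fails but both the kernel analysis of Section~\ref{sec:func_eq} and Proposition~\ref{prop:expression_mapping_fam_ex} go through with the adaptations already indicated), and keeping track of the branch of the square root when passing between $\psi_{1}(x)^n$ and its fractional-power form.
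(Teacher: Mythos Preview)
Your argument is correct and follows exactly the route the paper intends: the paper simply states that the proposition is obtained ``directly'' from Theorem~\ref{thm:main_intro-1} together with Proposition~\ref{prop:expression_mapping_fam_ex}, and you have unpacked precisely those two ingredients (identification $\psi_1=\rho$, the kernel factorisation $K(x,y)=\frac{xy}{y-x}(I_0(y)-I_0(x))$, and substitution). You are also right to flag, and resolve via the change of basis $X^n=\sum_m\alpha_{n,m}P_m$, that the displayed $H_n$ is built from the monomial $\psi_1^n$ rather than from $P_n(\psi_1)$ as in~\eqref{eq:expression_H_n}; the paper glosses over this point.
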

Observe that for all even values of $n\geq 1$, $H_n$ is a rational function (when the jumps are bounded).

For Kreweras' model, $I_0(x)=x+\frac{1}{3x}-\frac{x^2}{3}$. Hence, $I_0'(x)=1-\frac{1}{3x^2}-\frac{2x}{3}$, which admits the unique root $t=-\frac{1}{2}$ in $(-1,1)$. Hence, after some computations, $\rho$ in \eqref{eq:expression_rho} simplifies into 
\begin{equation}
\label{eq:rho_example-1}
   \rho(x)=\frac{1+2x}{3}\sqrt{\frac{1-x/4}{(1-x)^3}},
\end{equation}
and we have for example
\begin{align*}
   H_1(x,y)&=\frac{(1+2x)\sqrt{\frac{1-x/4}{(1-x)^3}}+(1+2y)\sqrt{\frac{1-y/4}{(1-y)^3}}}{3xy\bigl(1-\frac{1}{3}(\frac{1}{xy}+x+y)\bigr)}\\&=-\frac{1}{18}\left(1+\frac{27}{16}x+\frac{27}{16}y+\frac{567}{256}x^2+3xy+\frac{567}{256}y^2+\cdots\right).
\end{align*}
We also have 
\begin{equation*}
   H_2(x,y)=-\frac{9}{4}\frac{x-y}{(1-x)^3(1-y)^3}=-\frac{9}{4}\left(x-y+3x^2-3y^2+\cdots\right).
\end{equation*}

As a second example, consider $z=\frac{1}{2}$, $z_2=\frac{1}{6}$ and all other $z_r=0$. Then the curve admits the parametrization 
\begin{equation}
\label{eq:curve_S1_big_jumps_Example}
   \mathcal{S}_1=\left\lbrace\frac{1}{\sqrt{1+\frac{2}{\sqrt{3}}\sin t}}e^{it}: t\in[0,2\pi)\right\rbrace,
\end{equation}
see Figure \ref{fig:curve_S1_example}. In this case, the conformal mapping takes the form
\begin{equation}
\label{eq:rho_example-2}
   \rho(x)=\frac{1}{2}\sqrt{\frac{(3-x)(1+x)^3}{(3+x)(1-x)^3}}.
\end{equation}

\subsubsection*{Some universality results}

The following result shows that the family studied in this section has a universal behavior:
\begin{lemma}
\label{lem:uniform_2pi/3}
For any choice of parameters $z$ and $z_r$ in \eqref{eq:jumps_expression_mapping_fam_ex} satisfying to \eqref{eq:condition1_zr}, \eqref{eq:condition2_zr} and $\sum_{r} r^4z_{r}<\infty$, we have  $\theta=\frac{2\pi}{3}$.
\end{lemma}
Observe that the hypothesis $\sum_{r} r^4z_{r}<\infty$ is equivalent to our moment assumption \ref{H4:jumps}. Moreover,  Lemma~\ref{lem:uniform_2pi/3} is equivalent to the following statement: under the exact same assumptions (as in Lemma~\ref{lem:uniform_2pi/3}), the conformal mapping $\rho$ introduced in \eqref{eq:expression_rho} admits for $\vert x\vert<1$ close to $1$ the expansion
\begin{equation}
\label{eq:rho_example-univ}
   \rho(x)=(1-x)^{-3/2}\cdot \bigl(c+o(1)\bigr),
\end{equation}
where $c\neq 0$. See \eqref{eq:rho_example-1} and \eqref{eq:rho_example-2} for two examples. The above expansion comes from \eqref{eq:jumps_expression_mapping_fam_ex} together with the fact that in the neighbourhood of $1$,
\begin{equation}
\label{eq:expansion_I0_1}
   I_0(x)=I_0(1)+(x-1)^3\cdot \bigl(c'+o(1)\bigr),
\end{equation}
where $c'$ is a non-zero constant.

Using classical singularity analysis starting from Proposition \ref{prop:expression_HF_family} and Equation~\eqref{eq:rho_example-univ}, a consequence on the (conjecturally) positive harmonic function $h_1(i,j)$ is that as $i$ goes to infinity,
\begin{equation}
\label{eq:behavior_h_1(i,1)}
   h_1(i,1)\sim c \cdot i^{1/2},
\end{equation}
where $c$ is some positive constant. This holds under the assumptions of Lemma~\ref{lem:uniform_2pi/3}.

Using \cite{DeWa-15} and assuming that $\sum_r r^{4+\epsilon}z_r<\infty$ for some $\epsilon>0$, we know that as both $i$ and $j$ go to $\infty$ in some angular direction $j/i\to\tan\gamma$ with $\gamma\in(0,\pi/2)$, we have
\begin{equation}
\label{eq:behavior_h_1(i,1)_DW}
   h_1(i,j)\sim c_\gamma\cdot \bigl(\sqrt{i^2+j^2}\bigr)^{3/2},
\end{equation}
with $c_\gamma>0$, but we are not able to deduce this joint asymptotics from Proposition \ref{prop:expression_HF_family} and bivariate singularity analysis.

\begin{proof}[Proof of Lemma~\ref{lem:uniform_2pi/3}]
First, one easily computes
\begin{equation*}
   \sum_{k,\ell}k^2p_{k,\ell}=z+\sum_{r\geq 1}z_r \sum_{k= 1}^{r}k^2=z+\sum_{r\geq 1}\frac{r(r+1)(2r+1)}{6}z_r. 
\end{equation*}
Similarly, one has
\begin{equation*}
   \sum_{k,\ell}k\ell p_{k,\ell}=z+\sum_{r\geq 1}\frac{r(r-1)(r+1)}{6}z_r. 
\end{equation*}
Using \eqref{eq:condition2_zr}, one deduces that $\sum_{k,\ell}k^2p_{k,\ell}=2\sum_{k,\ell}k\ell p_{k,\ell}$, so that $\theta=\arccos(-\frac{1}{2})=\frac{2\pi}{3}$ by \eqref{eq:angle_at_1}.  Similar computations are performed in \cite[Lem.~8.1]{BMFuRa-20}.
\end{proof}

\subsection{An example with less moments}
\label{subsec:example_less_moments}
We first recall the general condition under which the positive harmonic function is constructed in \cite{DeWa-15}: in dimension $2$, it is assumed that the transition probabilities admit moments of order $2+\delta$ (with $\delta>0$) if $p=\frac{\pi}{\theta}\leq 2$ and of order $p$ if $p>2$. In this part, we would like to explore random walks with jumps as in \eqref{eq:jumps_expression_mapping_fam_ex}, but without second moment, meaning that $\sum_{r} r^4z_{r}=\infty$.

We first introduce our parameters $z$ and $z_r$. Given $\epsilon>0$, we define 
\begin{equation}
\label{eq:constant_Ce}
   C(\epsilon) =\frac{2}{\zeta(1+\epsilon)+3\zeta(2+\epsilon)+2\zeta(3+\epsilon)},
\end{equation}
where $\zeta$ denotes the classical Zeta Riemann function. We further define
\begin{equation}
\label{eq:constant_zz}
z = \frac{C(\epsilon)}{2} \bigl(\zeta(1+\epsilon)+\zeta(2+\epsilon)\bigr)
\qquad \text{and} \qquad  z_r = \frac{C(\epsilon)}{r^{3+\epsilon}}.
\end{equation} 
It is easy to verify that for this choice of weights, \eqref{eq:condition1_zr} and \eqref{eq:condition2_zr} are satisfied. Moreover, we check that for any $a<1+\epsilon$, the random walk has moments of order $a$. In particular, if $\epsilon>1$ then we have moments of order $2$, which is the classical framework of the paper (notice that in this case,  $p=\frac{3}{2}=\frac{\pi}{\theta}$).

The new, interesting case corresponds to $\epsilon\in(0,1]$, for which the second moment is infinite. Our first remark is that it is still possible to construct the harmonic functions, following the exact same steps leading to Proposition~\ref{prop:expression_HF_family}. To our knowledge, this is the first time that harmonic functions are constructed beyond the classical hypothesis of \cite{DeWa-15}. We thank the referee for suggesting us this possibility.

We can actually go further, and give some properties related to the growth at infinity of these harmonic functions. Indeed, in the classical case, the polynomial growth $\frac{3}{2}$ of the harmonic function (see \eqref{eq:behavior_h_1(i,1)_DW}) is directly related to the power $3$ in the expansion \eqref{eq:expansion_I0_1}. It is thus natural to see what now replaces \eqref{eq:expansion_I0_1} when the second moment does not exist. We will prove the following:
\begin{equation}
\label{eq:expansion_1_NO2}
   I_0(x)=I_0(1)+\left\{\begin{array}{ll} (x-1)^3\log(1-x)\cdot\bigl(c_1+o(1)\bigr) & \text{if } \epsilon=1,\smallskip\\
    (1-x)^{2+\epsilon}\cdot\bigl(c_\epsilon+o(1)\bigr)& \text{if } \epsilon \in(0,1),
    \end{array}\right.
\end{equation} 
where $c_\epsilon$, $\epsilon\in(0,1]$, is a non-zero constant. The above equation exhibits a phase transition when the moment assumption varies. We are not able to deduce from \eqref{eq:expansion_1_NO2} a joint bivariate asymptotics for the associated harmonic function $h(i,j)$ as both $i$ and $j$ tend to infinity, but a classical univariate singularity analysis gives the following estimate, which generalizes \eqref{eq:behavior_h_1(i,1)}:
\begin{equation}
\label{eq:behavior_h_1(i,1)_less-moments}
   h_1(i,1)\sim c_\epsilon \left\{\begin{array}{ll} i^{1/2}\cdot \log^{1/2} i & \text{if } \epsilon=1,\smallskip\\
    i^{\epsilon/2}& \text{if } \epsilon \in(0,1),
    \end{array}\right.
\end{equation}
where again $c_\epsilon$ denotes some positive constant.

\begin{proof}[Proof of Equation~\eqref{eq:expansion_1_NO2}]
The function $I_0$ may be expressed in terms of the classical polylogarithm function 
\begin{equation*}
   \Li_s(x) = \sum_{k\geq 1} \frac{x^k}{k^s},
\end{equation*}
the behavior of which near $1$ being well known. Precisely, using our notation \eqref{eq:constant_Ce}, one may write
\begin{equation*}
   I_0(x)=x+\frac{z}{x}-C(\epsilon)x\Li_{3+\epsilon}(x),
\end{equation*}
from which \eqref{eq:expansion_1_NO2} follows.
\end{proof}


\begin{figure}
\begin{tikzpicture}[scale=2.5]
\draw[gray,very thin] (-1.1,-1.1) grid (1.1,1.1)
	 [step=0.25cm] (-1,-1) grid (1,1);
\draw[->] (-1.1,0) -- (1.1,0)  node[right] {$1$};
\draw[->] (0,-1.1) -- (0,1.1)  node[above] {$1$};

  \textcolor{red}{\qbezier(64,-11)(49,0)(64,11)}

\draw[thick,variable=\t,domain=0:180,samples=50,blue]
  plot ({cos(\t)/sqrt(1+2/sqrt(3)*sin(\t))},{sin(\t)/sqrt(1+2/sqrt(3)*sin(\t))});
  \draw[thick,variable=\t,domain=0:180,samples=50,blue]
  plot ({cos(\t)/sqrt(1+2/sqrt(3)*sin(\t))},{sin(\t)/sqrt(1+2/sqrt(3)*sin(\t))*(-1)});
  

\put(22,5){$\theta=\frac{2\pi}{3}$}  

\put(28,23){$\mathcal S_1$}  
  
\end{tikzpicture}
\caption{The curve $\mathcal S_1$ in \eqref{eq:curve_S1_big_jumps_Example}, for the model \eqref{eq:jumps_expression_mapping_fam_ex} with jumps $z=\frac{1}{2}$, $z_2=\frac{1}{6}$ and all other $z_r=0$. As proved in Lemma~\ref{lem:uniform_2pi/3}, the value $\frac{2\pi}{3}$ of the angle at $1$ is a general fact for the family studied in this section, under some moment assumptions.}
\label{fig:curve_S1_example}
\end{figure}

\appendix
\section{Weierstrass' preparation and division theorems}
\label{sec:app_W}
We recall here two important results in the study of analytic functions or formal power series in several variables. We will only state these results in the case where two variables are involved, even if they hold for an arbitrary (finite) number of variables. Let $\mathbb{K}[[x,y]]$ denote the formal ring of power series in two variables, with coefficients in the field $\mathbb{K}$. Recall that $f\in \mathbb{K}[[x,y]]$ is invertible in $\mathbb{K}[[x,y]]$ if and only if the constant term of $f$ is non-zero. We will denote by $\mathbb K[[x]][y]$ the set of polynomials in $y$ with coefficients being power series in $x$.

\begin{definition}
A Weierstrass polynomial with respect to $y$ of degree $n$ is a polynomial $P\in \mathbb K[[x]][y]$ of degree $n$ with leading coefficient equal to $1$ and with non-invertible lower coefficients. In other words, $P$ can be written as
\begin{equation*}
   P(x,y)=y^n+F_{n-1}(x)y^{n-1}+\cdots+F_{0}(x),
\end{equation*}
with $F_{0},\ldots,F_{n-1}\in \mathbb{K}[[x]]$ such that $F_{i}(0)=0$ for all $0\leq i\leq n-1$. 
\end{definition}
Then, we have the following main results, respectively called Weierstrass preparation theorem and Weierstrass division theorem.

\begin{theorem}[Weierstrass preparation theorem]
\label{thm:preparation_theorem}
Suppose that $f\in \mathbb{K}[[x,y]]$ is non-invertible and that for some $k\geq 1$, the coefficient in $y^k$ of $f$ is non-zero. Then there exist a unique Weierstrass polynomial $P$ with respect to $y$ and an invertible element of $\mathbb{K}[[x,y]]$ such that 
\begin{equation*}
   f=hP.
\end{equation*}
If $f$ is the germ of an analytic function at $(0,0)$, then $h$ and $P$ are also germs of analytic functions at $(0,0)$.
\end{theorem}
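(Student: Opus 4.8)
The plan is to treat the formal and the analytic statements separately: the former by $\mathfrak m$-adic successive approximation, the latter by a contour–integral formula together with the uniqueness just obtained. This is classical (see \cite{Gu-65}), so I would only record the main steps. Throughout, identify $\mathbb K[[x,y]]$ with $R[[y]]$, where $R=\mathbb K[[x]]$ is a complete local ring with maximal ideal $\mathfrak m=(x)$, and write $f=\sum_{i\ge 0}a_i(x)y^i$ with $a_i\in R$. The hypothesis means that $f(0,y)=\sum_i a_i(0)y^i$ is a nonzero element of $\mathbb K[[y]]$; let $k\ge 1$ be its order at $y=0$ (the degree of the Weierstrass polynomial to be produced). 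Then $a_0(0)=\dots=a_{k-1}(0)=0$ while $a_k(0)\ne 0$, so $a_k\in R^{\times}$ and $f(0,y)=y^k u_0(y)$ with $u_0\in\mathbb K[[y]]^{\times}$.

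For the formal statement I would construct, inductively in $n\ge 0$, a unit $h_n\in R[[y]]$ and a monic $y$-polynomial $P_n=y^k+\sum_{j<k}b_{j,n}(x)y^j$ with all $b_{j,n}\in\mathfrak m$, such that $f\equiv h_nP_n\pmod{\mathfrak m^{n+1}R[[y]]}$, starting from $h_0=\sum_{i\ge k}a_iy^{i-k}$ (a unit, since its $y$-constant term $a_k$ is a unit of $R$) and $P_0=y^k$. The inductive step is to find corrections $\delta h\in\mathfrak m^{n+1}R[[y]]$ and $\delta P\in\mathfrak m^{n+1}R[y]$ of $y$-degree $<k$ with $f-h_nP_n\equiv h_n\,\delta P+P_n\,\delta h\pmod{\mathfrak m^{n+2}}$ (the cross term $\delta h\,\delta P\in\mathfrak m^{2n+2}$ being negligible). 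Since $\mathfrak m^{n+1}/\mathfrak m^{n+2}\cong\mathbb K$ is spanned by $x^{n+1}$, and $h_n\equiv u_0$, $P_n\equiv y^k\pmod{\mathfrak m}$, this collapses, after dividing by $x^{n+1}$, to the one–variable identity $\overline r=u_0\,\overline{\delta P}+y^k\,\overline{\delta h}$ in $\mathbb K[[y]]$, uniquely solvable by taking $\overline{\delta P}$ to be the part of degree $<k$ of $u_0^{-1}\overline r$ and $\overline{\delta h}=(\overline r-u_0\overline{\delta P})y^{-k}$. Passing to the $\mathfrak m$-adic limit gives a unit $h=\lim h_n$ and a Weierstrass polynomial $P=\lim P_n$ with $f=hP$, and uniqueness of $(h,P)$ follows from the uniqueness in each division step.

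For the analytic statement, assume $f$ holomorphic on a polydisc around $(0,0)$. First I would invoke Rouché's theorem: after shrinking, there is $\epsilon>0$ such that for every $x$ in a small disc the function $y\mapsto f(x,y)$ has exactly $k$ zeros $\alpha_1(x),\dots,\alpha_k(x)$ (with multiplicity) in $\{|y|<\epsilon\}$ and none on $\{|y|=\epsilon\}$. The power sums $\sum_i\alpha_i(x)^m=\frac1{2\pi i}\oint_{|y|=\epsilon}y^m\frac{\partial_y f(x,y)}{f(x,y)}\,dy$ are then holomorphic in $x$, hence so are the elementary symmetric functions of the $\alpha_i$ via Newton's identities; thus $P(x,y)=\prod_{i=1}^k(y-\alpha_i(x))$ is a Weierstrass polynomial with holomorphic coefficients, and $h:=f/P$ extends holomorphically across its only possible singularities $\{y=\alpha_i(x)\}$ and is non-vanishing near the origin. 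By the uniqueness established above, this holomorphic pair coincides with the formal solution, which is therefore convergent.

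The step I expect to be most delicate is the analytic one: one must check carefully that the $y$-root count in the small disc is locally constant (Rouché, using $f(0,y)\not\equiv 0$), that the contour integrals are genuinely holomorphic in $x$, and that $h=f/P$ has no real singularity — i.e. that this global construction reproduces exactly the factorization obtained formally. The formal part is essentially bookkeeping once one observes that the correction equation at each stage reduces to division by $y^k$ in $\mathbb K[[y]]$.
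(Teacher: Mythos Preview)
The paper does not actually prove Theorem~\ref{thm:preparation_theorem}: it is stated in Appendix~\ref{sec:app_W} as a classical result and simply recalled for later use (with an implicit reference to \cite{Gu-65}). Your proposal, by contrast, supplies a genuine proof, and it is the standard one: $\mathfrak m$-adic successive approximation for the formal case, and the Rouch\'e/Newton contour-integral construction for the analytic case, glued together via uniqueness. The argument is correct as outlined; the only point worth tightening is the uniqueness claim in the formal part. You derive uniqueness ``from the uniqueness in each division step'', but strictly speaking you should check that any two factorizations $f=hP=h'P'$ agree modulo every $\mathfrak m^{n}$, which amounts to observing that $P$ and $P'$ have the same roots (as power series) or, more directly, that $P-P'=(h^{-1}h'-1)P'$ forces $P-P'\in\mathfrak m^{n}$ for all $n$ by induction. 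This is routine, but it is not literally the same as uniqueness of each inductive correction.
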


\begin{theorem}[Weierstrass division theorem]\label{thm:division_theorem}
Suppose that $f\in \mathbb{K}[[x,y]]$ and $P$ is a Weierstrass polynomial of degree $n$ with respect to $y$. Then there exist a unique pair $(q,R)$ with $q\in \mathbb{K}[[x,y]]$ and $R$ a Weierstrass polynomial with respect to $y$ of degree less than $n$ such that 
\begin{equation*}
   f=qP+R.
\end{equation*}
If $f$ and $P$ are germs of analytic functions at $(0,0)$, then $q$ and $R$ are also germs of analytic functions at $(0,0)$.
\end{theorem}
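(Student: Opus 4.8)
The plan is to prove the formal statement first by a soft $x$-adic completeness argument, and then to obtain the analytic refinement via a Cauchy integral formula. Throughout, write $P(x,y)=y^n+F_{n-1}(x)y^{n-1}+\cdots+F_0(x)$ with every $F_i(0)=0$, so that $P(0,y)=y^n$; this is the only structural property of $P$ that matters. It is convenient to read ``$R$ a Weierstrass polynomial of degree less than $n$'' as ``$R\in\mathbb{K}[[x]][y]$ with $\deg_y R<n$'', which is exactly the object the argument produces. For uniqueness I would argue directly: if $f=q_1P+R_1=q_2P+R_2$, then $(q_1-q_2)P=R_2-R_1$; expanding in powers of $x$, write $q_1-q_2=\sum_{k\geq0}g_k(y)x^k$ with $g_k\in\mathbb{K}[[y]]$, $R_2-R_1=\sum_{k\geq0}S_k(y)x^k$ with $\deg_y S_k<n$, and $P=y^n+\sum_{k\geq1}P_k(y)x^k$ with $\deg_y P_k<n$. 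The coefficient of $x^0$ gives $g_0\,y^n=S_0$, which forces $g_0=0$ and $S_0=0$, and an immediate induction on $k$ yields $g_k=0$ for all $k$; hence $q_1=q_2$ and $R_1=R_2$.

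For existence in $\mathbb{K}[[x,y]]$, I would introduce the $\mathbb{K}[[x]]$-linear ``truncation'' map $\Lambda\colon\mathbb{K}[[x,y]]\to\mathbb{K}[[x,y]]$ sending $\sum_{i\geq0}h_i(x)y^i$ to $\sum_{i\geq n}h_i(x)y^{i-n}$, and write $P=y^n+xA$ with $A\in\mathbb{K}[[x,y]]$, which is possible precisely because all $F_i$ vanish at $0$. Given $q$, put $R:=f-qP=f-qy^n-xqA$; since $\Lambda(qy^n)=q$, one has $\Lambda(R)=\Lambda(f)-q-\Lambda(xqA)$, so $\deg_y R<n$ holds if and only if $q=\Lambda(f)-\Lambda(xqA)$. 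The operator $T(q)=\Lambda(f)-\Lambda(xqA)$ satisfies $T(q)-T(q')=-\Lambda\bigl(x(q-q')A\bigr)$, whose $x$-adic valuation is strictly larger than that of $q-q'$ (multiplication by $x$ raises the valuation, while $\Lambda$ and multiplication by $A$ never lower it). Thus $T$ is a contraction on the $x$-adically complete ring $\mathbb{K}[[x,y]]$; its unique fixed point $q$, together with $R:=f-qP$, solves the problem.

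For the analytic statement I would instead use Cauchy's formula. Choose $\epsilon>0$ so small that $f$ and $P$ are holomorphic on a neighbourhood of $\{|x|\leq\delta,\ |\zeta|\leq\epsilon\}$ for some $\delta>0$ and that $P(x,\zeta)\neq0$ whenever $|\zeta|=\epsilon$, $|x|\leq\delta$; this is possible because $P(0,\zeta)=\zeta^n$ has all its zeros at the origin, so by continuity of the roots (Rouch\'e's theorem) the zeros of $P(x,\cdot)$ remain inside $|\zeta|<\epsilon$ for $x$ small, and $|P|$ is bounded below on $|\zeta|=\epsilon$ uniformly in $|x|\leq\delta$. Then set, for $|x|<\delta$ and $|y|<\epsilon$,
\[
   q(x,y)=\frac{1}{2\pi i}\int_{|\zeta|=\epsilon}\frac{f(x,\zeta)}{P(x,\zeta)}\,\frac{d\zeta}{\zeta-y},\qquad R(x,y)=f(x,y)-q(x,y)P(x,y).
\]
Then $q$ is holomorphic there, and using $f(x,y)=\frac{1}{2\pi i}\int_{|\zeta|=\epsilon}\frac{f(x,\zeta)}{\zeta-y}\,d\zeta$ one finds
\[
   R(x,y)=\frac{1}{2\pi i}\int_{|\zeta|=\epsilon}\frac{f(x,\zeta)}{P(x,\zeta)}\cdot\frac{P(x,\zeta)-P(x,y)}{\zeta-y}\,d\zeta.
\]
Since $\bigl(P(x,\zeta)-P(x,y)\bigr)/(\zeta-y)$ is a polynomial in $y$ of degree $<n$ with coefficients holomorphic in $(x,\zeta)$, the integral is a polynomial in $y$ of degree $<n$ with coefficients holomorphic in $x$; so $R$ has the required form, and uniqueness follows from the formal case, as the ring of germs at the origin embeds into $\mathbb{C}[[x,y]]$.

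I expect the \emph{main obstacle} to be the analytic case, more precisely producing one honest polydisc on which every Cauchy integral above converges and defines a holomorphic function: this hinges on the uniform non-vanishing of $P(x,\cdot)$ on the fixed circle $|\zeta|=\epsilon$ together with a uniform lower bound for $|P|$ there. By contrast, once the splitting $P=y^n+xA$ is in place, the formal statement is a routine $x$-adic fixed-point argument presenting no genuine difficulty, and uniqueness in both settings is elementary.
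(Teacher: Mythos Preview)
The paper does not prove this theorem: the Weierstrass division theorem is merely stated in Appendix~\ref{sec:app_W} as a classical result (alongside the preparation theorem), without any argument. So there is no proof in the paper to compare against.

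Your proof is correct and follows the standard lines. The uniqueness argument via expansion in powers of $x$ and the inductive killing of the $g_k$ is clean; the existence in the formal case via the $x$-adic contraction $T(q)=\Lambda(f)-\Lambda(xqA)$ is the usual soft argument, and works because the factor $x$ in $P-y^n=xA$ guarantees the strict contraction; and the analytic case via the Cauchy kernel and the divided difference $(P(x,\zeta)-P(x,y))/(\zeta-y)$ is the textbook route (as in Gunning--Rossi, which the paper cites). Your reading of ``$R$ a Weierstrass polynomial of degree less than $n$'' as ``$R\in\mathbb{K}[[x]][y]$ with $\deg_y R<n$'' is also the intended one, since a literal Weierstrass polynomial would need leading coefficient $1$, which is not what is meant here.
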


\end{document}